%% LyX 2.1.4 created this file.  For more info, see http://www.lyx.org/.
%% Do not edit unless you really know what you are doing.
\documentclass[11pt,oneside,english,jou]{amsart}
\usepackage[T1]{fontenc}
\usepackage[latin9]{inputenc}
\usepackage[active]{srcltx}
\usepackage{verbatim}
\usepackage{amstext}
\usepackage{amsthm}
\usepackage{amssymb}
\usepackage{fixltx2e}
 \usepackage[pdftex,bookmarks,colorlinks,breaklinks]{hyperref}

\makeatletter
%%%%%%%%%%%%%%%%%%%%%%%%%%%%%% Textclass specific LaTeX commands.
\numberwithin{equation}{section}
\numberwithin{figure}{section}
\theoremstyle{plain}
\newtheorem{thm}{Theorem}[section]
  \theoremstyle{definition}
  \newtheorem{defn}[thm]{\protect\definitionname}
  \theoremstyle{definition}
  \newtheorem{rem}[thm]{\protect\remarkname}
  \theoremstyle{definition}
  \newtheorem{example}[thm]{\protect\examplename}
  \theoremstyle{plain}
  \newtheorem{prop}[thm]{\protect\propositionname}
  \theoremstyle{plain}
  \newtheorem{cor}[thm]{\protect\corollaryname}
  \theoremstyle{plain}
  \newtheorem{lem}[thm]{\protect\lemmaname}
  \theoremstyle{definition}
  \newtheorem{problem}[thm]{\protect\problemname}
  \theoremstyle{definition}
  \newtheorem{question}[thm]{Question}

%%%%%%%%%%%%%%%%%%%%%%%%%%%%%% User specified LaTeX commands.

%\usepackage[notref,notcite]{showkeys}
\usepackage{tikz}
\usetikzlibrary{matrix}

\usepackage{xcomment}

%\usepackage{trackchanges}
%\usepackage[finalnew]{trackchanges}
%\addeditor{Yonatan}
%\addeditor{Eli}
%\addeditor{Xiangdong}

\newcommand{\Fcal}{\mathcal{F}}

\newcommand{\Hcal}{\mathcal{HK}}

\newcommand{\Z}{\mathbb{Z}}

\newcommand{\N}{\mathbb{N}}

\newcommand{\g}{\gamma}

\newcommand{\Ga}{\Gamma}

\newcommand{\del}{\delta}
\newcommand{\Del}{\Delta}
\newcommand{\ep}{\epsilon}

\newcommand{\ol}{\overline}

\newcommand{\co}{{\rm{codim\,}}}

\newcommand{\Aut}{{\rm{Aut\,}}}

\newcommand{\Or}{{\rm{O}}}

\newcommand{\id}{{\rm{id}}}
\newcommand{\Id}{{\rm{Id}}}

\newcommand{\dist}{{\rm{dist}}}

\renewcommand{\a}{\alpha}

\renewcommand{\o}{\omega}
\newcommand{\wt}{\widetilde}

\DeclareMathOperator{\Q}{\textbf{RP}}
\DeclareMathOperator{\Qeq}{\textbf{S\textsubscript{eq}}}
\DeclareMathOperator{\PP}{\textbf{P}}
\DeclareMathOperator{\RR}{\textbf{R}}
\DeclareMathOperator{\RP}{\textbf{RP}}
\DeclareMathOperator{\NRP}{\textbf{NRP}}

\DeclareMathOperator{\Hom}{Hom}

\DeclareMathOperator{\Fix}{Fix}
\DeclareMathOperator{\codim}{codim}

\def\be{\begin{equation}}
\def\ee{\end{equation}}

\usepackage{babel}
\providecommand{\definitionname}{Definition}
  \providecommand{\examplename}{Example}
  \providecommand{\lemmaname}{Lemma}
  \providecommand{\problemname}{Problem}
  \providecommand{\propositionname}{Proposition}

\makeatother

\usepackage{babel}
  \providecommand{\corollaryname}{Corollary}
  \providecommand{\definitionname}{Definition}
  \providecommand{\examplename}{Example}
  \providecommand{\lemmaname}{Lemma}
  \providecommand{\problemname}{Problem}
  \providecommand{\propositionname}{Proposition}
  \providecommand{\remarkname}{Remark}

\begin{document}

\title[regionally proximal equivalence relation]{Higher order regionally proximal equivalence relations for general minimal group actions}

\author{Eli Glasner, Yonatan Gutman and XiangDong Ye}

\address{Eli Glasner, Department of Mathematics, Tel Aviv University, Tel
Aviv, Israel.}

\email{glasner@math.tau.ac.il}

\address{Yonatan Gutman, Institute of Mathematics, Polish Academy of Sciences,
ul. \'{S}niadeckich~8, 00-656 Warszawa, Poland.}

\email{y.gutman@impan.pl}

\address{XiangDong Ye, Wu Wen-Tsun Key Laboratory of Mathematics, USTC, Chinese
Academy of Sciences and Department of Mathematics, University of Science
and Technology of China, Hefei, Anhui, 230026, P.R. China.}

\email{yexd@ustc.edu.cn}

\keywords{Nilpotent regionally proximal relation of order $d$, enveloping semigroup,
minimal t.d.s, idempotent, equivalence relation, maximal pronilfactor,
equicontinuous.}

\subjclass[2000]{37C15, 37B20}

\maketitle

\begin{date} \today \end{date}

\newcommand*{\singlesquare}[4]{   \draw[thin,black] (0,0) node[below left] {#1} -- (1, 0) node[below right] {#2} -- (1, 1) node[above right] {#4} -- (0, 1) node[above left] {#3} -- (0,0); }
\newcommand*{\doublesquare}[7]{   \begin{scope}[xscale=#7]     \draw[thin,black] (0,0) node[below left] {#4} -- (1, 0) node[below] {#5} -- (2, 0) node[below right] {#6} -- (2, 1) node[above right] {#3} -- (1, 1) node[above] {#2} -- (0, 1) node[above left] {#1} -- (0,0);     \draw[thin,black] (1, 0) -- (1, 1);   \end{scope} }
\newcommand*{\threecube}[9]{   \begin{scope}[x={(#9, 0)}, y={(0, 1)}, z={(0.352, 0.317)}, scale=2]     \draw (0,0,0) node[below left] {#1} -- (0,0,1) node[below right] {#3} -- (0,1,1) node[above] {#7} -- (0,1,0) node[above left] {#5} -- cycle;     \draw (0,0,0) -- (1,0,0) node[below right] {#2} -- (1,0,1) node[right] {#4} -- (0,0,1) -- cycle;     \draw (1,1,1) node[above right] {#8} -- (0,1,1) -- (0,0,1) -- (1,0,1) -- cycle;
    \draw (0,0,0) -- (1,0,0) -- (1,1,0) node [above left] {#6} -- (0,1,0) -- cycle;     \draw (1,1,1) -- (1,1,0) -- (1,0,0) -- (1,0,1) -- cycle;     \draw (1,1,1) -- (0,1,1) -- (0,1,0) -- (1,1,0) -- cycle;   \end{scope} }
\begin{abstract}
We introduce higher order regionally proximal relations suitable for
an arbitrary acting group. For minimal abelian group actions, these relations
coincide with the ones introduced by Host, Kra and Maass. Our main
result is that these relations are equivalence relations whenever
the action is minimal. This was known for abelian actions by a result
of Shao and Ye. We also show that these relations lift through extensions
between minimal systems.
Answering a question by Tao, given a minimal system, we prove that the regionally
proximal equivalence relation of order $d$  corresponds to the maximal dynamical Antol\'\i n Camarena--Szegedy nilspace factor of order at most $d$. In particular the regionally proximal equivalence
relation of order one corresponds to the maximal abelian group factor. Finally by using a result of Gutman, Manners and Varj{\'u} under some restrictions on the acting group,
it follows  that the regionally proximal equivalence relation of order $d$ corresponds to the maximal pronilfactor of order at most $d$ (a factor which is an inverse limit of nilsystems of order at most $d$).
\end{abstract}
\setcounter{tocdepth}{1}
{\hypersetup{hidelinks}
\tableofcontents
}

%{\color{blue}together with the lifting property}
\section{Introduction}

\subsection{General background}

An old result in the field of topological dynamics
is a theorem by Ellis and Gottschalk \cite{EG60}, which characterizes
the equivalence relation $\Qeq(X)$, induced from the maximal equicontinuous factor of a system $(G,X)$, as the smallest $G$-invariant closed equivalence relation which contains the regionally proximal relation $\Q(X)$.
Starting with Veech \cite{veech1968equicontinuous}, various authors, including
Ellis-Keynes \cite{ellis1971characterization} and McMahon \cite{mcmahon1978relativized},
came up with various sufficient conditions for $\Q(X)$ to be an equivalence relation, whence for $\Q(X)= \Qeq(X)$. In particular, they proved that for a minimal system with an abelian acting group this is indeed the case.

Host, Kra and Maass \cite{HKM10} introduced the higher
order regionally proximal relations $\RP^{[d]}(X)$ ($d\in\mathbb{{N}}$, $\RP^{[1]}(X)=\Q(X)$)
for abelian actions, while investigating a topological dynamical analog
of the celebrated Host-Kra structure theorem \cite{HK05}. One of
their results was that $\RP^{[d]}(X)$ are equivalence relations for
minimal distal systems $(\mathbb{{Z}},X)$. Shao and Ye \cite{SY12}
generalized this theorem and showed
that the relations
$\RP^{[d]}(X)$ are equivalence relations for all minimal actions by abelian groups.

In this article we present a new definition, the \textit{nilpotent regionally proximal relations} of order $d$, $\NRP^{[d]}(X)$ $(d\in\mathbb{{N}})$,
defined for general group actions $(G,X)$.
These are closed and $G$-equivariant
relations which coincide with the Host-Kra-Maass definition for minimal abelian
group actions. However, for non-abelian group actions it may happen
that $\Q(X)\subsetneq\NRP^{[1]}(X)$.

Our main result is that
for minimal actions $\NRP^{[d]}(X)$ is an equivalence relation for
all $d\in\mathbb{{N}}$ . This result is surprising as the regionally
proximal relation $\Q(X)$ is known \textit{not} to be an equivalence
relation for some (non-amenable) group actions (\cite{mcmahon1976}).

The proof of Shao and Ye for abelian group actions was based on the
general structure theory of minimal actions due to Ellis-Glasner-Shapiro
\cite{ellis1975proximal}, McMahon \cite{mcmahon1978relativized}
and Veech \cite{V77}. In this article we present a direct enveloping
semigroup proof of this theorem which is very similar to the short
proof by Ellis and Glasner of the celebrated theorem by Van der Waerden
on the existence of arbitrary long monochromatic arithmetic progressions
in finite colorings of the integers (\cite{G03,G94}). The proof is
shorter and yields the result for general group actions. The possibility
of applying the Ellis-Glasner proof as a shortcut to Shao and Ye's
proof in the abelian setting was also discovered by Ethan Akin (\cite{AkinPrivate}).

Generalizing a result of Shao and Ye in the abelian setting (\cite{SY12}),
we show that given an extension of minimal systems $\pi:(G,X)\to(G,Y)$,
the nilpotent regionally proximal relation lifts, i.e. $\pi\times\pi(\NRP^{[d]}(X))=\NRP^{[d]}(Y)$.
From this one easily concludes that for any minimal system, $(G,X/\NRP^{[d]}(X))$
is the maximal factor of $(G,X)$ for which the nilpotent regionally proximal
relation of order $d$ is trivial. Following \cite{HKM10}, we call such systems
\textit{systems of order at most $d$}. By the theory developed
by Gutman, Manners and Varj{\'u} in \cite{GMVIII}, it follows that a system $(G,X)$ of order at most $d$ , where $G$ has a dense subgroup generated by a compact group, is a \textit{pronilsystem}
of order at most $d$, that is an inverse limit of nilsystems of
order at most $d$. Nilsystems, pronilsystems and the related \textit{nilsequences
}appear in different guises in several areas of mathematics: topological
dynamics (\cite{auslander1963flows}), ergodic theory (\cite{HK05,Z07}),
additive number theory (\cite{GT10}) and additive combinatorics (\cite{S12}).

The paper \cite{GMVIII} forms the third part of a series by the same authors \cite{GMVI,GMVII} extending the ground-breaking work of Antol\'\i n Camarena and Szegedy \cite{CS12}, where the concept of  \emph{nilspaces} was introduced. A nilspace is a compact space $X$ together with closed collections of \emph{cubes} $C^n(X)\subseteq X^{2^n}$, $n=1,2,\ldots$, satisfying some natural axioms. We show $(G,X/\NRP^{[d]}(X))$ equipped with a natural collection of cubes is the maximal factor of $(G,X)$ which is a  nilspace of order at most $d$. This answers a question by Tao in \cite{TaoBlog15}.

Comparing $\Qeq(X)$, the smallest equivalence relation which contains
the regionally proximal relation $\Q(X)$ with $\NRP^{[1]}(X)$, we
show that while the former corresponds to the maximal equicontinuous
factor, the latter corresponds to the maximal (compact) abelian group
factor. Thus unlike in the case of the maximal equicontinuous factor
we have an explicit and unknown hitherto form for the equivalence
relation corresponding to the maximal abelian group factor for arbitrary
minimal actions. One may wonder whether a similar result can be achieved for
the maximal (not necessarily abelian) group factor of a general minimal
system.

\subsection{Structure of the paper}

Section \ref{sec:Preliminaries} contains basic notation. Section
\ref{sec:The-higher-order} introduces the nilpotent higher order regionally
proximal relations. In Section \ref{sec:minimal-subsystems-for} we
prove several results that play a key role in Section \ref{sec:RPd is an equivalence relation}.
Section \ref{sec:RPd is an equivalence relation} is devoted to proving
the main result of the paper, namely that the nilpotent higher order regionally
proximal relations are equivalence relations for general minimal group
actions. In Section \ref{sec:Lifting--from} we show that the nilpotent regionally
proximal equivalence relations lift through dynamical morphisms between
minimal systems. In Section \ref{sec:systems-of-order} we investigate
the structure of systems whose nilpotent regionally proximal equivalence relation
of order $d$ is trivial and answer Tao's question. In Subsection \ref{sec:The-relation-between}
we investigate the relation between the classical regionally proximal
relation and the nilpotent regionally proximal equivalence relation of order
one. In Subsection \ref{sec:diff gen} we present a different higher order generalization of the classical regionally proximal relation for arbitrary group actions, about which we know little.
In Section \ref{sec:A-dynamical-cubespace} we exhibit an example
related to Section \ref{sec:systems-of-order}. Section \ref{sec:Open-Questions} is dedicated to open questions. Finally the Appendix contains technical
results.

\subsection{Acknowledgements}
%011116

The first author was partially supported by
a grant of the Israel Science Foundation (ISF 668/13).
The second author was partially supported by the Marie Curie grant PCIG12-GA-2012-334564
and by the NCN (National Science Center, Poland) grant 2016/22/E/ST1/00448.
%{\color{blue}
The third author was partially supported by NNSF of China 11371339  and 11431012.
%}.
The work originated in the trimester program
on ``Universality and Homogeneity'' at the Hausdorff Research Institute
for Mathematics in Bonn where the first and second authors took part.
We are grateful to the organizers of the program, Alexander Kechris,
Katrin Tent and Anatoly Vershik. A significant part of the work was carried out during the visit of the second author to the third author at the University of Technology and Science of China at Hefei in October 2015. The second author is grateful for the hospitality and excellent working conditions during the visit. The work was concluded in the Simons Semester ``Dynamical Systems'' at the Banach Center in Warsaw (September-December 2015) where the first and second authors took part. We acknowledge the partial support by  grant 346300 for IMPAN from the Simons Foundation and the matching 2015-2019 Polish MNiSW fund. We are grateful to Yixiao Qiao for help with Figure \ref{fig:circle}. We thank Joe Auslander for sending us comments on a previous version. Finally, we are grateful to the referee for a careful reading and many useful suggestions.

\section{Preliminaries\label{sec:Preliminaries}}

\subsection{The underlying system }

Throughout the article, $(G,X)$ denotes a \textbf{topological dynamical
system} (t.d.s) where $G$ is a (Hausdorff) topological group and $X$
is a compact Hausdorff space. To improve readability we sometimes assume without loss of generality that $X$ is metrizable and use sequences of elements instead of nets of elements. We stress that this assumption is superfluous unless stated explicitly\footnote{In fact the only results which require that $X$ is metrizable are Theorem \ref{thm:genericity} and Theorem \ref{thm:strong structure}.}. When a metric is evoked we denote it by $\dist()$.  The action of an element $g\in G$ on $x\in X$ is denoted by $gx$. For $x\in X$, $\Or(x,G)=Gx$ denotes the orbit of $x$. A continuous
$G$-equivariant map $(G,X)\to(G,Y)$ is called a \textbf{dynamical morphism}\footnote{We do not insist a dynamical morphism to be surjective, however most (but not all) dynamical morphisms appearing in this article are.}. In exceptional and explicitly stated cases, we allow for dynamical morphisms between two t.d.s $(G,X)$ and $(G',X')$, where possibly $G\neq G'$. In such a case there  exist a continuous group homomorphism $\phi:G\to G'$  and a continuous map $f:X\to X'$ such that for all $x\in X$ and $g\in G$, $f(gx)=\phi(g)f(x)$.

Discrete cubes and their faces will appear abundantly throughout the
article. In the next subsections we summarize some related notation.

\subsection{Discrete cubes\label{sub:Discrete-cubes}}

For an integer $d\ge0$ we denote the set of maps $\{0,1\}^{d}\to X$
by $X^{[d]}$ ($X^{[0]}=X$) and call its elements $d$-\textbf{configurations}.
For a configuration $x\in X^{[d]}$, we call the points $\{x(\o)\}_{\o\in\{0,1\}^{d}}$
the \textbf{vertices} of $x$. Let $\pi_{*}:X^{[d]}\to X^{\{0,1\}_{*}^{d}}\triangleq X^{\{0,1\}^{d}\setminus\{\vec{{0}}\}}$
denote the projection onto the last $(2^{d}-1)$-coordinates; i.e.,
the map which forgets the $\vec{0}$-coordinate. Let $X_{*}^{[d]}=\pi_{*}(X^{[d]})=\prod\{X_{\ep}:\ep\not=\vec{0}\}$
and for $x\in X^{[d]}$ let $x_{*}=\pi_{*}(x)\in X_{*}^{[d]}$ denote
its projection. Sometimes it is convenient to write $x=(x_{\vec{0}},x_{*})$.
For each $\ep\in\{0,1\}^{d}$ we denote by $\pi_{\ep}$ the projection
map from $X^{[d]}$ onto $X_{\ep}=X$. For a point $x\in X$ we let
$x^{[d]}\in X^{[d]}$ and $x_{*}^{[d]}\in X_{*}^{[d]}$ be the \textbf{constant}
configuration, that is, the configuration all of whose
vertices are equal to $x$. We denote by
$\Del^{[d]}=\Del^{[d]}(X)=\{x^{[d]}:x\in X\}$, the \textbf{diagonal}
of $X^{[d]}$ and by $\Del_{*}^{[d]}=\Del_{*}^{[d]}(X)=\{x_{*}^{[d]}:x\in X\}$
the diagonal of $X_{*}^{[d]}$. Sometimes it is convenient to represent
$X^{[d]}$ $(d\geq1)$ as a product space $X^{[d]}=X^{[d-1]}\times X^{[d-1]}$.
When using this decomposition we write $x=(x_{f},x_{c})$ and refer
to $x_{f}$ and $x_{c}$ as the \textbf{floor} and \textbf{ceiling}
of $x$. More explicitly define the identification $X^{[d]}\to X^{[d-1]}\times X^{[d-1]}$
by $x\mapsto(x_{f},x_{c})$ with $(x_{f})_{\ep}=x_{\ep0}$ and $(x_{c})_{\ep}=x_{\ep1}$
for $\epsilon\in\{0,1\}^{d-1}$. If $f',f'':X^{[d-1]}\to X^{[d-1]}$
are functions then we define

\begin{equation}
f'\times f''(x_{f},x_{c})=(f'(x_{f}),f''(x_{c}))\label{eq:product}
\end{equation}
and define $\pi_{f},\pi_{c}:X^{[d]}\to X^{[d-1]}$ by $\pi_{f}(x)=x_{f}$
and $\pi_{c}(x)=x_{c}$. Further in the case of $X^{[2]}$ we will
employ the following identification:

\begin{equation}\label{eq:X2 identification}
X^{[2]}\to X\times X\times X\times X\qquad x\mapsto(x_{00},x_{10},x_{01},x_{11})
\end{equation}

\subsection{Faces\label{sub:Faces}}

Let $d\geq0$. A set of the form
$F=\{\o\in\{0,1\}^{d}|\ \o_{i_1}=\a_{1},\o_{i_{2}}
=\a_{2},\ldots,\o_{i_{k}}=\a_{k}\}$ for some $k\geq0$, $1\le i_{1}<i_{2}<\cdots<i_{k}\le d$ and $\a_{i}\in\{0,1\}$ is called a \textbf{face }of \textbf{codimension }$k$ of the discrete
cube $\{0,1\}^{d}$.\footnote{The case $k=0$ corresponds to $\{0,1\}^{d}$.}
One writes $\codim(F)=k$. A face\textbf{ }of codimension\textbf{
}$1$ is called a \textbf{hyperface}. If all $\a_{i}=1$ we say that
the face is \textbf{upper}. Note all upper faces contain $\vec{1}$
and there are exactly $2^d$ upper faces. Similarly if all $\a_{i}=0$
we say that the face is \textbf{lower}.

\begin{figure}[h]
\centering{}\begin{tikzpicture}
 \begin{scope}[x={(1, 0)}, y={(0, 1)}, z={(0.352, 0.317)}, scale=3]
  \draw (0,0,0) node[below left] {$x_{000}$} -- (0,0,1) node[above left] {$x_{010}$} -- (0,1,1) node[above]
  {$x_{011}$} -- (0,1,0) node[above left] {$x_{001}$} -- cycle;
  \draw (0,0,0) -- (1,0,0) node[below right] {$x_{100}$} -- (1,0,1) node[right] {$x_{110}$} -- (0,0,1) -- cycle;
  \draw (1,1,1) node[above right] {$x_{111}$} -- (0,1,1) -- (0,0,1) -- (1,0,1) -- cycle;
  \draw (1,1,1) -- (1,1,0) -- (1,0,0) -- (1,0,1) -- cycle;           \draw (1,1,1) -- (0,1,1) -- (0,1,0) -- (1,1,0) -- cycle;
  \draw[fill=black!50, opacity=.8] (0,0,0) -- (0,0,1) -- (1,0,1) -- (1,0,0) -- cycle;
  \draw (0,0,0) -- (1,0,0) -- (1,1,0) node [above left] {$x_{101}$} -- (0,1,0) -- cycle;
   \end{scope}
\end{tikzpicture} \caption{A $3$-configuration with a shaded lower hyperface.}
\end{figure}
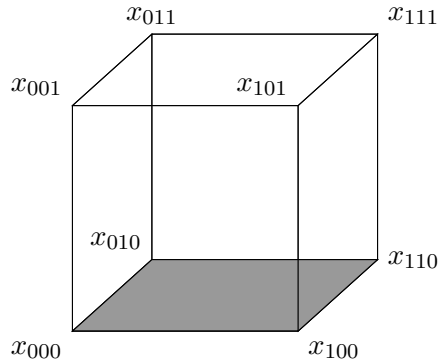

\section{Nilpotent regionally proximal relations\label{sec:The-higher-order}}

\subsection{Proximality and its generalizations\label{sub:Proximality-and-its}}
Let us recall several classical definitions.
Two points $x,y\in X$ are said to be\textbf{ proximal}, denoted $(x,y)\in\PP(X)$,
if there is a sequence of elements $g_{i}\in G$ such that $\lim_{i\to\infty}\dist(g_{i}x,g_{i}y)\allowbreak =0$.
The system $(G,X)$ is said to be \textbf{distal}, if
%{\color{blue}
$\PP(X)=\Del=\{(x, x)|x \in X\}$.
%}
Two points $x,y\in X$ are said to be \textbf{regionally proximal},
denoted $(x,y)\in \Q(X)$, if there are sequences of points $x_{i},y_{i}\in X$
and a sequence of elements $g_{i}\in G$ such that $\lim_{i\to\infty}x_{i}=x$,
$\lim_{i\to\infty}y_{i}=y$ and $\lim_{i\to\infty}\dist(g_{i}x_{i},g_{i}y_{i})=0$.
Let $\Qeq(X)$ be the smallest $G$-invariant closed equivalence relation which contains $\Q(X)$\footnote{By \cite[Theorem 2]{EG60}, $\Qeq(X)$ is induced from the maximal equicontinuous factor of $(G,X)$.}. Clearly $\PP(X)\subset\Q(X)\subset\Qeq(X)$.
It is a remarkable fact that in many cases the regionally proximal relation
happens to be an equivalence relation, i.e it holds $\Q(X)=\Qeq(X)$.
These cases include,
inter alia, the case when $(G,X)$ is proximal or weakly mixing,
or when it is minimal
and admits an invariant measure (\cite{mcmahon1978relativized}, see \cite[Theorem 9.8]{A}).
In particular if $(G, X)$ is
minimal with $G$ amenable, or when it is minimal and satisfies the Bronstein condition\footnote{$(G,X)$ is said to satisfy the Bronstein condition if $X\times X$
has a dense set of minimal points. The mentioned result was proven
in \cite[Theorem 2.6.2]{V77} and was also obtained independently
by Ellis (unpublished).}.
A particular case of the latter occurs when $(G, X)$ is minimal and point-distal
(\cite[Proposition 16.10]{ellis2014automorphisms},
first proven in \cite{ellis1971characterization}).
It is also known
that the regionally proximal relation can fail to be an equivalence
relation for minimal t.d.s. A well known counterexample is given in \cite[Example 1.8]{mcmahon1976}
(for more details see \cite[V(1.8)(2)]{dV93}).

In \cite{HKM10} Host, Kra and Maass introduced the regionally proximal
relation of order $d$ $(d\in\mathbb{{N}})$ for $G$ abelian, where
the case $d=1$ corresponds to the classical regionally proximal relation:
\begin{defn}\label{HK-def}
\cite[Definition 3.2]{HKM10} Let $(G,X)$ be a topological dynamical
system with $G$ abelian and $d\in\mathbb{{N}}$. The points $x,y\in X$
are said to be \textbf{regionally proximal of order $d$}, denoted
$(x,y)\in\RP^{[d]}(X)$, if there are sequences of elements $g_{i}^{1},g_{i}^{2},\ldots,g_{i}^{k}\in G,\ x_{i},y_{i}\in X$,
such that for all $(\epsilon_{1},\epsilon_{2},\ldots\epsilon_{d})\in\{0,1\}_{*}^{d}$:
\[
\lim_{i\to\infty}x_{i}=x,\ \lim_{i\to\infty}y_{i}=y,\:\lim_{i\to\infty}\dist\big((\sum_{j=1}^{d}\epsilon_{j}g_{i}^{j})x_{i},(\sum_{j=1}^{d}\epsilon_{j}g_{i}^{j})y_{i}\big)=0
\]

\end{defn}
In order to generalize this definition to general group actions, we
introduce several important concepts in the next two subsections.

\subsection{Host-Kra cube group\label{sub:Host-Kra-cube-group}}

Let $H\subset G$ be a subgroup and $F\subset\{0,1\}^{d}$. For $h\in H$
we denote by $[h]_{F}$ the element of
$H^{[d]}= H^{\{0,1\}^d}$
defined as $[h]_{F}(\o)=h$
if $\o\in F$ and $[h]_{F}=\Id$ otherwise, where $\Id$ denotes the
unit element of $G$. Define:
\[
[H]_{F}=\{[h]_{F}|h\in H\}
\]
We call the subgroup of $G^{[d]}$ generated by $[G]_{F}$, where
$F$ ranges over all \textit{hyperfaces} of $\{0,1\}^{d},$ the \textbf{Host-Kra
cube group}\footnote{The terminology is due to \cite[Definition E.3]{GT10} where it is
employed in the context of filtered Lie groups. } and denote it by $\Hcal^{[d]}$. We call the subgroup of $G^{[d]}$
generated by $[G]_{F}$ where $F$ ranges over all \textit{upper hyperfaces} of $\{0,1\}^{d}$ the \textbf{face cube group} and denote it by $\Fcal^{[d]}$. It is easy to see that $\Hcal^{[d]}$ is generated by $\Fcal^{[d]}$ and $\Del^{[d]}(G)$.
\begin{example}
One sees readily that $\Fcal^{[2]}$ is generated by $$\{(\Id,\Id,h,h), (\Id,h',\Id,h'): h,h'\in G\}$$ and
$\Hcal^{[2]}$ is generated by $$\{(\Id,\Id,h,h), (\Id,h',\Id,h'), (k,k,\Id,\Id), (k',\Id,k',\Id): h,h',k,k'\in G\}.$$
Thus, $\Hcal^{[2]}$ is generated by $\Fcal^{[2]}$ and $\{(t,t,t,t): t\in G\}=\Del^{[2]}(G).$
\end{example}

The Host-Kra and face cube groups originate in \cite[Section 5]{HK05} and coincide with
the \textit{parallelepiped groups} and \textit{face groups} respectively
of \cite[Definition 3.1]{HKM10} introduced for abelian actions. Notice
$\Fcal^{[d]}\subset\Hcal^{[d]}$ and for all $\g\in\Fcal^{[d]}$,
$\g(\vec{0})=\Id$. The Host-Kra cube and face groups act (coordinate-wise)
on $X^{[d]}$ by $\g c(\o)=\g(\o)c(\o)$ for $\g\in\Hcal^{[d]}$,
$c\in X^{[d]}$ and $\o\in\{0,1\}^{d}$. Similarly the face group act (coordinate-wise)
on $X_{*}^{[d]}$.

\begin{prop}
\label{prop:H=00003DFDiag}Let $G$ be a group then $\Hcal^{[d]}=\Fcal^{[d]}\Del^{[d]}(G)$.\end{prop}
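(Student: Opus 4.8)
The plan is to prove $\Hcal^{[d]}=\Fcal^{[d]}\Del^{[d]}(G)$ by showing that the right-hand side is already a subgroup of $G^{[d]}$ containing all the generators $[G]_F$ of $\Hcal^{[d]}$, where $F$ ranges over all hyperfaces. Since the reverse inclusion $\Fcal^{[d]}\Del^{[d]}(G)\subseteq\Hcal^{[d]}$ is immediate (both $\Fcal^{[d]}$ and $\Del^{[d]}(G)$ sit inside $\Hcal^{[d]}$, which is a group), the whole content is the forward inclusion, and for that it suffices to check that the product set $\Fcal^{[d]}\Del^{[d]}(G)$ is closed under the group operations and swallows every $[g]_F$.

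First I would record the key commutation fact: for a hyperface $F$ and the diagonal element $[t]_{\{0,1\}^d}\in\Del^{[d]}(G)$, one has $[t]_{\{0,1\}^d}[g]_F[t]_{\{0,1\}^d}^{-1}=[tgt^{-1}]_F$, because conjugation acts coordinate-wise and $F$ is preserved setwise. More generally, $\Del^{[d]}(G)$ normalizes $\Fcal^{[d]}$: conjugating a word in the generators $[g]_F$ (for upper hyperfaces $F$) by a diagonal element again produces such a word with the group elements conjugated. Hence $\Del^{[d]}(G)\,\Fcal^{[d]}=\Fcal^{[d]}\,\Del^{[d]}(G)$, from which closure under multiplication follows: $(\gamma_1 \delta_1)(\gamma_2\delta_2)=\gamma_1(\delta_1\gamma_2\delta_1^{-1})\delta_1\delta_2\in\Fcal^{[d]}\Del^{[d]}(G)$ since $\delta_1\gamma_2\delta_1^{-1}\in\Fcal^{[d]}$ and $\delta_1\delta_2\in\Del^{[d]}(G)$. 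Closure under inverses is similar: $(\gamma\delta)^{-1}=\delta^{-1}\gamma^{-1}=(\delta^{-1}\gamma^{-1}\delta)\delta^{-1}\in\Fcal^{[d]}\Del^{[d]}(G)$. And the identity lies in the product set trivially. So $\Fcal^{[d]}\Del^{[d]}(G)$ is a subgroup of $G^{[d]}$.

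Next I would check it contains each generator $[g]_F$ of $\Hcal^{[d]}$ for an arbitrary hyperface $F=\{\omega:\omega_i=\alpha\}$. If $\alpha=1$ then $F$ is an upper hyperface and $[g]_F\in\Fcal^{[d]}\subseteq\Fcal^{[d]}\Del^{[d]}(G)$ directly. If $\alpha=0$, write $F^c=\{\omega:\omega_i=1\}$ for the complementary (upper) hyperface; then $[g]_F=[g]_{\{0,1\}^d}\,[g^{-1}]_{F^c}$ — indeed on coordinates $\omega$ with $\omega_i=0$ the right side gives $g\cdot\Id=g$, and on coordinates with $\omega_i=1$ it gives $g\cdot g^{-1}=\Id$. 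Thus $[g]_F=[g^{-1}]_{F^c}^{\,\,'}\cdot(\text{diagonal})$ after moving the diagonal factor across using the normalization identity above, landing in $\Fcal^{[d]}\Del^{[d]}(G)$. Since $\Hcal^{[d]}$ is generated by these $[g]_F$, we conclude $\Hcal^{[d]}\subseteq\Fcal^{[d]}\Del^{[d]}(G)$, completing the proof.

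The only mildly delicate point — and the step I would be most careful about — is the normalization claim that $\Del^{[d]}(G)$ conjugates $\Fcal^{[d]}$ into itself with the correct bookkeeping of which group elements get conjugated; everything else is formal manipulation of coordinate-wise-defined elements of $G^{[d]}$. Note also that this refines the observation already made in the excerpt that $\Hcal^{[d]}$ is generated by $\Fcal^{[d]}$ and $\Del^{[d]}(G)$: the present proposition upgrades "generated by" to the clean factorization "$=\Fcal^{[d]}\Del^{[d]}(G)$", with the diagonal factor on the right.
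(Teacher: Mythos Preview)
Your proof is correct and uses essentially the same approach as the paper: both arguments rest on the conjugation identity $[t]_{\{0,1\}^d}[g]_F[t]_{\{0,1\}^d}^{-1}=[tgt^{-1}]_F$, i.e.\ that $\Del^{[d]}(G)$ normalizes $\Fcal^{[d]}$. The paper phrases it as ``take a word in upper-hyperface and diagonal generators and push all diagonals to the right,'' while you phrase it as ``the product set is a subgroup containing all generators''; these are two standard, equivalent packagings of the same normalization fact, with your version being slightly more self-contained in that you verify directly that lower-hyperface generators lie in the product rather than appealing to the earlier remark that $\Hcal^{[d]}$ is generated by $\Fcal^{[d]}$ and $\Del^{[d]}(G)$.
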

\begin{proof}
By definition of the groups involved it is easy to see $\Fcal^{[d]}\Del^{[d]}(G)\subset\Hcal^{[d]}$.
In order to prove the reverse direction fix $g\in\Hcal^{[d]}$ where
by definition $g=\prod_{j=1}^{m}[t_{j}]_{F_{j}}$ where $F_{j}$ is
an upper hyperface or $F_{j}=\{0,1\}^{d}$ and $t_{j}\in G$.
%{\color{red} (this does not follow from the definition and we need to add a sentence to explain)}
Note that for $F_{j}$ an upper hyperface one has:

\[
[t_{1}]_{\{0,1\}^{d}}[t_{2}]_{F_{j}}=[t_1 t_{2}t_1^{-1}]_{F_{j}}[t_{1}]_{\{0,1\}^{d}}.
\]

This implies that one can move all occurrences of the form $[t_{1}]_{\{0,1\}^{d}}$
to the right while leaving on the left only expressions of the form

 $[t]_{F_{j}}$ with $F_j$ an upper hyperfaceface.

Thus we have shown $g\in\Fcal^{[d]}\Del^{[d]}(G)$.
\end{proof}

\subsection{Dynamical cubespaces\label{sub:Dynamical-cubespaces}}

The notion of cubespaces and nilspaces originate from Host and Kra's
{\em parallelepiped structures} in \cite{HK08}. Antol\'\i n Camarena and Szegedy
\cite{CS12} carried out a systematic study of nilspaces and described
their structure. We recommend \cite[Subsection 1.3]{GMVIII} for a
succinct introduction to nilspaces (but see also \cite{GMVI,GMVII,can17algebraic,can17compact}).
In this subsection we introduce the notion of \textit{dynamical cubespaces}.
For the general theory see Subsection \ref{sub:Nilspaces}. We stress that the dynamical cubespaces are a subclass of the class of cubespaces (see Proposition \ref{prop ap:cube inv}).

Let $(G,X)$ be a topological dynamical system. Following Host, Kra
and Maass \cite[Definition 1.1]{HKM10} we introduce the \textbf{dynamical
cubes} as the orbit closure of constant configurations and denote
it by
\begin{equation}
C_{G}^{[d]}(X)=\overline{\{gx^{[d]}|g\in\Hcal^{[d]},x\in X\}}\quad(d\geq0).\label{eq:def of Cn H}
\end{equation}

The pair $(X,{C_{G}^{\bullet}})\triangleq(X,{\{C_{G}^{[d]}(X)\}_{d\in\mathbb{{Z}}_{+}}})$
is called a \textbf{dynamical cubespace} induced by $(G,X)$. We also
denote:

\[
C_{x}^{[d]}(X)=C_{G}^{[d]}(X)\cap\big(\{x\}\times X_{*}^{[d]}\big)
\]

\[
C_{x*}^{[d]}(X)=\pi_{*}(C_{x}^{[d]}(X))
\]

\begin{prop}\label{prop:face rep for Cd}
Let $(G,X)$ be a minimal t.d.s, $x_{0}\in X$
and $d\geq1$ then
\end{prop}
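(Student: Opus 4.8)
The plan is to prove what I expect the statement to be, namely that for the fixed base point $x_{0}$,
\[
C_{x_{0}}^{[d]}(X)=\overline{\Fcal^{[d]}x_{0}^{[d]}},\qquad C_{x_{0}*}^{[d]}(X)=\overline{\Fcal^{[d]}x_{0*}^{[d]}},
\]
and, as an auxiliary identity, $C_{G}^{[d]}(X)=\overline{\Hcal^{[d]}x_{0}^{[d]}}$. The inclusions ``$\supseteq$'' are formal. Since $\Fcal^{[d]}\subseteq\Hcal^{[d]}$ and $\gamma(\vec{0})=\Id$ for every $\gamma\in\Fcal^{[d]}$, each point $\gamma x_{0}^{[d]}$ lies in $C_{G}^{[d]}(X)$ and has $\vec{0}$-coordinate $x_{0}$, hence lies in $C_{G}^{[d]}(X)\cap(\{x_{0}\}\times X_{*}^{[d]})=C_{x_{0}}^{[d]}(X)$; closing up gives $\overline{\Fcal^{[d]}x_{0}^{[d]}}\subseteq C_{x_{0}}^{[d]}(X)$, and the same argument with $\Hcal^{[d]}$ gives $\overline{\Hcal^{[d]}x_{0}^{[d]}}\subseteq C_{G}^{[d]}(X)$.

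For $C_{G}^{[d]}(X)\subseteq\overline{\Hcal^{[d]}x_{0}^{[d]}}$ the point is that every element of $G^{[d]}$ acts on $X^{[d]}$ as a homeomorphism, hence continuously. By Proposition \ref{prop:H=00003DFDiag} and $\Del^{[d]}(G)x_{0}^{[d]}=(Gx_{0})^{[d]}$ one has $\Hcal^{[d]}x_{0}^{[d]}=\Fcal^{[d]}\Del^{[d]}(G)x_{0}^{[d]}=\Hcal^{[d]}(Gx_{0})^{[d]}$; by minimality $Gx_{0}$ is dense in $X$, so $(Gx_{0})^{[d]}$ is dense in $\Del^{[d]}(X)$, and applying a fixed $g\in\Hcal^{[d]}$ to an approximating net shows $\overline{\Hcal^{[d]}x_{0}^{[d]}}=\overline{\Hcal^{[d]}(Gx_{0})^{[d]}}\supseteq\overline{\Hcal^{[d]}\Del^{[d]}(X)}=C_{G}^{[d]}(X)$. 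Consequently every $c\in C_{x_{0}}^{[d]}(X)$ may be written $c=\lim_{\alpha}g_{\alpha}x_{0}^{[d]}$ with $g_{\alpha}\in\Hcal^{[d]}$; writing $g_{\alpha}=\gamma_{\alpha}\Del^{[d]}(h_{\alpha})$, $\gamma_{\alpha}\in\Fcal^{[d]}$, $h_{\alpha}\in G$, and reading off the $\vec{0}$-coordinate gives $h_{\alpha}x_{0}=(g_{\alpha}x_{0}^{[d]})(\vec{0})\to c(\vec{0})=x_{0}$.

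The substantial inclusion is $C_{x_{0}}^{[d]}(X)\subseteq\overline{\Fcal^{[d]}x_{0}^{[d]}}$. Since $\Del^{[d]}(h^{-1})[t]_{F}\Del^{[d]}(h)=[h^{-1}th]_{F}$ for an upper hyperface $F$, the group $\Del^{[d]}(G)$ normalizes $\Fcal^{[d]}$, so $\gamma_{\alpha}\Del^{[d]}(h_{\alpha})=\Del^{[d]}(h_{\alpha})\gamma_{\alpha}'$ with $\gamma_{\alpha}':=\Del^{[d]}(h_{\alpha}^{-1})\gamma_{\alpha}\Del^{[d]}(h_{\alpha})\in\Fcal^{[d]}$, whence
\[
\gamma_{\alpha}'x_{0}^{[d]}=\Del^{[d]}(h_{\alpha}^{-1})\big(g_{\alpha}x_{0}^{[d]}\big)\in\Fcal^{[d]}x_{0}^{[d]};
\]
its $\vec{0}$-coordinate is $h_{\alpha}^{-1}(h_{\alpha}x_{0})=x_{0}$, so every subnet limit of $\gamma_{\alpha}'x_{0}^{[d]}$ lies in $\{x_{0}\}\times X_{*}^{[d]}$. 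It remains to show $\gamma_{\alpha}'x_{0}^{[d]}\to c$, i.e.\ that applying $\Del^{[d]}(h_{\alpha}^{-1})$ to $g_{\alpha}x_{0}^{[d]}\to c$ still yields $c$ in the limit. This is the main obstacle: the $G$-action extended to the enveloping semigroup is not jointly continuous, so one cannot naively cancel $h_{\alpha}^{-1}$ against the drift $h_{\alpha}x_{0}\to x_{0}$. I expect to resolve it through the enveloping semigroup $E=E(\Hcal^{[d]},X^{[d]})$: pass to a subnet with $g_{\alpha}\to P\in E$, so that $c=Px_{0}^{[d]}$ (evaluation at the fixed point $x_{0}^{[d]}$ is continuous); use that $x_{0}^{[d]}$ is almost periodic for $(\Hcal^{[d]},X^{[d]})$, because $x_{0}$ is almost periodic in the minimal system $(G,X)$, so $\overline{\Hcal^{[d]}x_{0}^{[d]}}$ is minimal and $c$ is almost periodic; and then use a minimal idempotent of $E$ lying over the minimal idempotent $u\in E(G,X)$ with $ux_{0}=x_{0}$ to absorb the base-point drift, exploiting that the relevant evaluation becomes continuous at such an idempotent.

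Finally, $C_{x_{0}*}^{[d]}(X)=\overline{\Fcal^{[d]}x_{0*}^{[d]}}$ follows by applying $\pi_{*}$, which is continuous, $\Fcal^{[d]}$-equivariant, closed (as $X^{[d]}$ is compact), and restricts to a homeomorphism of $\{x_{0}\}\times X_{*}^{[d]}$ onto $X_{*}^{[d]}$; hence it carries the identity for $C_{x_{0}}^{[d]}(X)$ to the one for $C_{x_{0}*}^{[d]}(X)$.
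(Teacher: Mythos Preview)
You have misidentified the content of the proposition. The displayed formula immediately following the proposition header reads
\[
C_{G}^{[d]}(X)=\overline{\{gx_{0}^{[d]}\mid g\in\Hcal^{[d]}\}}=\overline{\{gx^{[d]}\mid g\in\Fcal^{[d]},\ x\in X\}},
\]
so both equalities concern the \emph{full} cube set $C_{G}^{[d]}(X)$, not the fibre $C_{x_{0}}^{[d]}(X)$, and in the rightmost expression $x$ ranges over all of $X$. What you call your ``auxiliary identity'' is precisely the first equality, and your argument for it (minimality of $(G,X)$ makes $(Gx_{0})^{[d]}$ dense in $\Del^{[d]}(X)$, then apply each fixed $g\in\Hcal^{[d]}$) is exactly the paper's one-word justification ``trivial''. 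The second equality follows at once from Proposition~\ref{prop:H=00003DFDiag}: since $\Hcal^{[d]}=\Fcal^{[d]}\Del^{[d]}(G)$ one has $\Hcal^{[d]}\Del^{[d]}(X)=\Fcal^{[d]}\Del^{[d]}(G)\Del^{[d]}(X)=\Fcal^{[d]}\Del^{[d]}(X)$, and taking closures gives the claim. That is the entire proof.

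The identity you actually set out to prove, $C_{x_{0}}^{[d]}(X)=\overline{\Fcal^{[d]}x_{0}^{[d]}}$, is in the paper's notation the assertion $C_{x_{0}}^{[d]}(X)=Y_{x_{0}}^{[d]}(X)$, and this is \emph{false} in general. The paper establishes it for minimal distal systems and, in the metrizable case, for a dense $G_{\delta}$ set of base points (Theorem~\ref{thm:genericity}), but explicitly records that there exist non-distal minimal systems for which $Y_{x*}^{[d]}(X)\neq C_{x*}^{[d]}(X)$; the Sturmian-type example in Section~\ref{sec:A-dynamical-cubespace} is closely related. Hence the difficulty you flagged at ``the main obstacle'' is real and cannot be overcome: the enveloping-semigroup absorption you sketch would, if it succeeded, prove a false statement. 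What survives in full generality is only that $Y_{x}^{[d]}(X)$ is the \emph{unique} $\Fcal^{[d]}$-minimal subsystem of $C_{x}^{[d]}(X)$ (Theorem~\ref{thm:unique minimal}), a much subtler fact whose proof does use the enveloping semigroup, but not to collapse the whole fibre.
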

\[
C_{G}^{[d]}(X)=\overline{\{gx_{0}^{[d]}|\ g\in\Hcal^{[d]}\}}=\overline{\{gx^{[d]}|\ g\in\Fcal^{[d]},x\in X\}}
\]

\begin{proof}
The first equality is trivial. The second follows from Equation (\ref{eq:def of Cn H})
and Proposition \ref{prop:H=00003DFDiag}.
\end{proof}

\subsection{Nilpotent regionally proximal relations\label{sub:Regionally-proximal-equivalence}}

We are ready to introduce the definition of the nilpotent regionally proximal relations
for general group actions.
%011116
%{\color{blue}
\begin{defn}\label{NC-def}
Given $x,y\in X$
% are two points,
we let
% us introduce
the \textbf{lower
corner} $\llcorner^{[d]}(x,y)$
%for
be
the configuration defined by:
$\o\mapsto x$
for $\o\neq\vec{1}$ and $\vec{1}\mapsto y$;
and the \textbf{upper corner} $\urcorner^{[d]}(x,y)$ by the configuration: $\vec{0}\mapsto x$
and $\o\mapsto y$ for $\o\neq\vec{0}$.
\end{defn}

\begin{defn}
\label{def:general RPd}Let $(G,X)$ be a topological dynamical system. Let $d\geq1$.
We say that a pair of points, $x,y\in X$ are \textbf{nilpotent regionally proximal
of order} $d$ and write $(x,y)\in\NRP^{[d]}(X)$, if and only if $\llcorner^{[d+1]}(x,y)\in C_{G}^{[d+1]}(X)$. That is $(x,y)\in\NRP^{[d]}(X)$ if and only if
there are sequences $g_i \in  \Hcal^{[d+1]}$ and $x_i \in X$
such that
$$
\lim_{i\to \infty} g_i x_i^{[d+1]} = \llcorner^{[d+1]}(x,y).
$$
\end{defn}

%We note that sometimes when using induction it is convenient to define $\NRP^{[0]}(X)=X\times X$. To have a better understanding of the definition, we have the following remarks.

Host, Kra and Maass \cite[Corollary 4.3]{HKM10}
showed that if $(\mathbb{{Z}},X)$ is minimal and distal, then $(x,y)\in\RP^{[d]}(X)$
if and only if $\llcorner^{[d+1]}(x,y)\in C_{\mathbb{{Z}}}^{d+1}(X)$.
This was generalized to arbitrary minimal abelian actions by Shao
and Ye in \cite[Theorem 3.4]{SY12}. Thus for minimal abelian actions $(G,X)$, $\NRP^{[d]}(X)=\RP^{[d]}(X)$ \footnote{It is easy to see that this statement is not true in general if we remove the minimality assumption.}.

When $G$ is abelian there are, canonically defined, surjective, group homomorphisms
$G^d = G \times G \times \cdots \times G \to  \Fcal^{[d]}$
and $G^{d+1} = G \times G \times \cdots \times G \to  \Hcal^{[d]}$,
namely,
$$
(g_1, g_2, \dots, g_d) \in G^d \mapsto
\big(\sum_{j=1}^{d}\epsilon_{j}g_{j} |
(\epsilon_{1},\ldots,\epsilon_{d}) \in \{0,1\}^{d}\big)
$$
and
$$
(g_1, g_2, \dots, g_d, g_{d+1}) \in G^{d+1} \mapsto
\big(g_{d+1} + \sum_{j=1}^{d}\epsilon_{j}g_{j} |
(\epsilon_{1},\ldots,\epsilon_{d}) \in\{0,1\}^{d}\big),
$$
respectively.
This fact explains why  Host and Kra's
definition of $\RP^{[d]}(X)$ (Definition \ref{HK-def}) is much simpler than Definition
\ref{NC-def}. However, as we will see, when the action is given by a non-commutative $G$, commutation relations in $G$ or, more precisely, its lower central series,
determine, through the cubic structure, the behaviour of the $\NRP^{[d]}(X)$ relations.

The reader may wonder why the word "nilpotent" appears in the name of $\NRP^{[d]}(X)$. The reason is that if $\NRP^{[d]}(X)$ is trivial, i.e. $\NRP^{[d]}(X)=\triangle$, then $(G,X)$ is isomorphic to an action by a nilpotent group of nilpotency class at most $d$ (for an exact statement see Proposition \ref{prop:effective action}). Another natural question is what is the relation between $\NRP^{[1]}(X)$ and $\PP(X),\Q(X),\Qeq(X)$ defined in Subsection \ref{sub:Proximality-and-its}. It turns out that  $\PP(X)\subset\Q(X)\subset\Qeq(X)\subset\NRP^{[1]}(X)$  (for a proof see Proposition \ref{prop:Q subset RP1}). Finally we remark that we could have used the upper corner $\urcorner^{[d+1]}(x,y)$ in the definition of $\NRP^{[d]}(X)$ as  by Proposition \ref{prop:alt def of nrp}, $(x,y)\in\NRP^{[d]}(X)$ if and only if $\urcorner^{[d+1]}(x,y)\in C_{G}^{[d+1]}(X)$.

\begin{example}\label{ex:nrp}
Fix $d\in\mathbb{N}$. We give two examples of calculating $\NRP^{[d]}(X)$ of very different in flavor. Let $G_{d+1}$ denotes the $(d+1)$-th element
of the lower central series of $G$ (see Subsection \ref{sub:Lower-central-series}),
then $(x,hx)\in\NRP^{[d]}(X)$ for any $h\in G_{d+1}$. (For a proof
of this fact, see Lemma \ref{lem:elementary}(4)). Hence if $G$ is
perfect, that is $G=[G,G]$, and the action is minimal, then $\NRP^{[d]}(X)=X^{[d]}$.
For the second example let $(G,L/\Ga)$ be a (generalized) minimal nilsystem,
that is, $L$ is a nilpotent Lie group of nilpotency
class at most $d$, $\Ga\subseteq L$ is a discrete cocompact subgroup
and the minimal action of $G$ on $L/\Ga$ is through a continuous group homomorphism $\phi:G\to L$.
In \cite[Proposition 2.5]{GMVI} based on \cite[Lemma E.10]{GT10} it is proven that
$\{g\Gamma^{[d+1]}|\ g\in\Hcal^{[d+1]}\}$ is compact. By Proposition \ref{prop:face rep for Cd} we conclude $C_{G}^{[d+1]}(X)=\{g\Gamma^{[d+1]}|\ g\in\Hcal^{[d+1]}\}$. By \cite[Proposition 2.6]{GMVI} if $c,c'\in C_{G}^{[d+1]}(X)$ such that  $c(\o)=c'(\o)$
for $\o\neq\vec{1}$ then $c(\vec{1})=c'(\vec{1})$. It follows that if $\llcorner^{[d+1]}(x,y)\in C_{G}^{[d+1]}(X)$  then $x=y$ as $x^{[d+1]}\in C_{G}^{[d+1]}(X)$. We conclude
$\NRP^{[d]}(L/\Ga)$ is trivial. See also Subsection \ref{subsec:Overview}.
\end{example}

Shao and Ye \cite[Theorem 3.5]{SY12} showed that $\RP^{[d]}(X)$
is an equivalence relation for minimal actions by abelian groups. Our main result is the following theorem:
\begin{thm}
\label{thm:main}Let $(G,X)$ be a minimal topological dynamical system, then $\NRP^{[d]}(X)$
$(d\geq1)$ is a closed $G$-invariant equivalence relation.
\end{thm}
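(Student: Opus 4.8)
The plan is to prove Theorem~\ref{thm:main} by an enveloping-semigroup argument in the style of the Ellis--Glasner proof of the multiple Birkhoff recurrence / van der Waerden theorem. The three properties to establish are: (i) $\NRP^{[d]}(X)$ is closed; (ii) it is $G$-invariant; (iii) it is an equivalence relation. Properties (i) and (ii) are immediate from Definition~\ref{def:general RPd}: $\NRP^{[d]}(X)$ is the preimage of the closed set $C_G^{[d+1]}(X)$ under the continuous map $(x,y)\mapsto\llcorner^{[d+1]}(x,y)$, hence closed, and the diagonal $G$-action commutes with the $\Hcal^{[d+1]}$-action and preserves $C_G^{[d+1]}(X)$, so $G$-invariance follows. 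Reflexivity is trivial since $x^{[d+1]}\in C_G^{[d+1]}(X)$, and symmetry will follow from the alternative description via the upper corner (Proposition~\ref{prop:alt def of nrp}): applying the cube automorphism that reverses all coordinates (which preserves $\Hcal^{[d+1]}$ and hence $C_G^{[d+1]}(X)$) interchanges $\llcorner^{[d+1]}(x,y)$ and $\urcorner^{[d+1]}(y,x)$, so $(x,y)\in\NRP^{[d]}(X)\iff(y,x)\in\NRP^{[d]}(X)$.

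The substance is transitivity. Suppose $(x,y)\in\NRP^{[d]}(X)$ and $(y,z)\in\NRP^{[d]}(X)$; I want $(x,z)\in\NRP^{[d]}(X)$. The strategy is to work in the Ellis (enveloping) semigroup $E=E(X^{[d+1]})$ of the system $(\Hcal^{[d+1]},X^{[d+1]})$, or more precisely to pass to a minimal subsystem of the closure $\overline{\Hcal^{[d+1]}\Delta^{[d+1]}(X)}=C_G^{[d+1]}(X)$ and use a minimal idempotent $u$ in the enveloping semigroup acting on it. The hypothesis $(x,y)\in\NRP^{[d]}(X)$ says $\llcorner^{[d+1]}(x,y)\in C_G^{[d+1]}(X)$; by minimality of $(G,X)$ and the results of Section~\ref{sec:minimal-subsystems-for} (which is advertised as providing exactly the tools needed here), one can find a point in a minimal subsystem of $C_G^{[d+1]}(X)$ — or arrange via an idempotent $u$ in the relevant enveloping semigroup that $u$ fixes the base point $x$ while $u\llcorner^{[d+1]}(x,y)$ remains of the form $\llcorner$, i.e. the idempotent acts suitably on the $\vec 1$-coordinate. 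The idea is: choose a minimal idempotent $u$ in the enveloping semigroup of $(\Hcal^{[d+1]},X^{[d+1]})$ (or of a suitable factor acting on the last coordinate) with $ux^{[d+1]}=x^{[d+1]}$; then $u\cdot\llcorner^{[d+1]}(x,y)$ is still in $C_G^{[d+1]}(X)$ and, because $u$ acts as identity on the diagonal through $x$, it has the form $\llcorner^{[d+1]}(x,y')$ with $y'\in\overline{Gy}=X$; combining such idempotents/limits coming from the two hypotheses, and using that $C_G^{[d+1]}(X)$ is invariant under the whole cube group and under these limit operations, one concatenates the two "corner" configurations to produce $\llcorner^{[d+1]}(x,z)\in C_G^{[d+1]}(X)$.

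Concretely, the key trick — exactly as in the Ellis--Glasner proof of multiple recurrence — is that $C_G^{[d+1]}(X)$ is closed under the action of the enveloping semigroup of $(\Hcal^{[d+1]}, X^{[d+1]})$, and a minimal idempotent $u$ in that semigroup satisfies $u c = c$ for $c$ in the minimal subsystem it lives on. Pick $c_1 \in C_G^{[d+1]}(X)$ realizing $(x,y)$ and $c_2$ realizing $(y,z)$; push both into the same minimal subsystem using the prolongation/enveloping-semigroup structure; then there is an element $p$ of the enveloping semigroup and an idempotent $u$ such that $p$ sends the base configuration $y^{[d+1]}$ to $x^{[d+1]}$ (this uses minimality of $(G,X)$, hence minimality of the diagonal) while $u$ fixes things, and $p\cdot c_2$ has floor $x^{[d+1]}$-compatible data with ceiling value governed by $z$. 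The composition $\llcorner^{[d+1]}(x,y)$ "followed by" $p\cdot c_2$, glued along the common value $y$ on all coordinates $\neq\vec 0$ of the first and the coordinate $\vec 0$-region of the second, produces a configuration in $C_G^{[d+1]}(X)$ all of whose coordinates $\neq \vec 1$ equal $x$ and whose $\vec 1$-coordinate equals $z$, i.e.\ $\llcorner^{[d+1]}(x,z)\in C_G^{[d+1]}(X)$. The main obstacle — and where the work of Section~\ref{sec:minimal-subsystems-for} is indispensable — is justifying this "gluing": one must show that the set of configurations in $C_G^{[d+1]}(X)$ with a prescribed value on a sub-face can be composed, which amounts to a statement about the minimal subsystems of $C_G^{[d+1]}(X)$ and the way idempotents in the enveloping semigroup act on the $\vec 0$- and $\vec 1$-coordinates independently of the rest. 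Once that structural lemma is in hand, transitivity — and with it the theorem — follows cleanly.
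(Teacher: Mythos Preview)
Your sketch has the right overall spirit --- the enveloping semigroup machinery of Section~\ref{sec:minimal-subsystems-for} is indeed where the work lies --- but both the symmetry and transitivity arguments, as written, have real gaps.

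\textbf{Symmetry.} Your argument is circular. Reversing all coordinates sends $\llcorner^{[d+1]}(x,y)$ to $\urcorner^{[d+1]}(y,x)$, so from $(x,y)\in\NRP^{[d]}(X)$ you get $\urcorner^{[d+1]}(y,x)\in C_G^{[d+1]}(X)$. To conclude $(y,x)\in\NRP^{[d]}(X)$ you now need $\llcorner^{[d+1]}(y,x)\in C_G^{[d+1]}(X)$, and you invoke Proposition~\ref{prop:alt def of nrp} for this --- but look at its proof: it \emph{uses} Theorem~\ref{thm:main}. Passing from an upper corner to a lower corner with the same pair of labels is not a cube symmetry; it is a nontrivial dynamical fact. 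The paper's symmetry argument projects $\urcorner^{[d+1]}(x,y)$ to $\urcorner^{[d]}(x,y)\in C_x^{[d]}(X)$, uses Corollary~\ref{cor:approaching constant} (a consequence of the unique-minimal-subsystem theorem) to find $f_k\in\Fcal^{[d]}$ with $f_k\urcorner^{[d]}(x,y)\to x^{[d]}$, and then doubles: $(f_k\times f_k)\urcorner^{[d+1]}(x,y)\to(x^{[d]},\urcorner^{[d]}(y,x))$, which after a coordinate permutation is $\llcorner^{[d+1]}(x,y)$.

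\textbf{Transitivity.} Your description --- choose $p$ in the enveloping semigroup sending $y^{[d+1]}$ to $x^{[d+1]}$, apply it to $\llcorner^{[d+1]}(y,z)$, then ``glue'' --- does not lead anywhere concrete: such a $p$ acts coordinatewise and sends the $\vec 1$-coordinate $z$ to some $p_{\vec 1}z$, not to $z$, so you do not obtain $\llcorner^{[d+1]}(x,z)$. The paper's mechanism is different and sharper. One works with the \emph{face} group $\Fcal^{[d+1]}$ (which fixes the $\vec 0$-coordinate) rather than $\Hcal^{[d+1]}$, inside the slice $C_y^{[d+1]}(X)$. Both $\urcorner^{[d+1]}(y,x)$ and $\urcorner^{[d+1]}(y,z)$ lie there, and by Corollary~\ref{cor:xyy is minimal} each generates a minimal $\Fcal^{[d+1]}$-subsystem. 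Theorem~\ref{thm:unique minimal} --- this is the structural lemma you were reaching for --- says $C_y^{[d+1]}(X)$ has a \emph{unique} minimal $\Fcal^{[d+1]}$-subsystem $Y_y^{[d+1]}(X)$, so $\urcorner^{[d+1]}(y,z)\in\overline{\Fcal^{[d+1]}\urcorner^{[d+1]}(y,x)}$. Now comes the key observation: since $\Fcal^{[d+1]}$ ignores the $\vec 0$-coordinate, the map $w\mapsto(x,w_*)$ is an $\Fcal^{[d+1]}$-isomorphism from $\overline{\Fcal^{[d+1]}\urcorner^{[d+1]}(y,x)}$ onto $Y_x^{[d+1]}(X)$, and it sends $\urcorner^{[d+1]}(y,z)$ to $\urcorner^{[d+1]}(x,z)$. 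Thus $\urcorner^{[d+1]}(x,z)\in C_G^{[d+1]}(X)$, and one finishes via the (now established) symmetry argument. No direct idempotent-on-configurations ``concatenation'' is used; the enveloping semigroup enters only upstream, in the proof of Theorem~\ref{thm:unique minimal}.
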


The theorem is surprising as the regionally proximal relation $\Q(X)$ is known \textit{not} to be an equivalence relation for some (non-amenable) group actions (\cite[Example 1.8]{mcmahon1976}; for more details see \cite[V(1.8)(2)]{dV93}).

\section{Minimal subsystems for the Host-Kra and face cube groups \label{sec:minimal-subsystems-for}}

Let $(G,X)$ be a minimal topological dynamical system. In this section
we prove several results that play a key role in the proof that $\NRP^{[d]}(X)$
are equivalence relations for $d\geq1$. These results are interesting
by their own right. The proofs use the theory of the Ellis semigroup
which we now recall.

\subsection{Ellis semigroup\label{sub:Ellis-semigroup}}

We very briefly review some theory related with the Ellis semigroup
(also known as the enveloping semigroup). A self-contained reference
is \cite[Chapter I]{glasner1976proximal}. We also recommend \cite[Appendix A]{SY12}.
\begin{defn}
The \textbf{Ellis semigroup} $E=E(G,X)$ of a t.d.s $(G,X)$ is the
closure of $G$ in the semigroup (with respect to composition) $X^{X}$
equipped with the product topology. The Ellis semigroup is compact
but in general not metrizable (see \cite{glasner2008metrizable}).
A dynamical morphism $\pi:(G,X)\to(G,Y)$ induces a surjective continuous
morphism of semigroups $\pi^{\bullet}:E(G,X)\to E(G,Y)$. Note that
for all $q\in E$, right multiplication in $E$ by $q$, $E\rightarrow E$,
$p\mapsto pq$ is continuous. An element $u\in E$ with $u^{2}=u$
is called an \textbf{idempotent}. A non-empty subset $I\subset E$
is a \textbf{left ideal} if $EI\subset I$. A \textbf{minimal} left
ideal is a left ideal that does not contain any proper left ideal
of $E$. Clearly any left ideal contains a minimal left ideal. An
idempotent contained in a minimal left ideal is called a \textbf{minimal
idempotent}. \end{defn}
\begin{prop}
\label{prop:Ellis fundamentals}Let $(G,X)$ be a t.d.s and $E$ its
Ellis semigroup. Suppose $L\subset E$ is a minimal left ideal and
and let $J(L)$ be the set of idempotents in $L$, then:\end{prop}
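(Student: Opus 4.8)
\textbf{Proof proposal for Proposition \ref{prop:Ellis fundamentals}.}

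The plan is to establish the standard structural facts about a minimal left ideal $L$ and its set of idempotents $J(L)$. First I would recall that since right multiplication $p \mapsto pq$ is continuous and $L$ is closed (being a minimal left ideal, $L = Ep$ for any $p \in L$, hence $L = \overline{G}p$ is the continuous image of a compact set), each $Lq$ with $q \in L$ is a left ideal contained in $L$, so by minimality $Lq = L$. This surjectivity is the key engine. Applying it with a fixed $q \in L$, the set $\{p \in L : pq = q\}$ is nonempty, closed, and is a sub-semigroup of the compact right-topological semigroup $L$; by the Ellis--Numakura lemma it contains an idempotent $u$, which then satisfies $uq = q$. This shows every element of $L$ is fixed on the left by some idempotent of $L$, so in particular $J(L) \neq \emptyset$.

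Next I would address the structural decomposition: for each idempotent $u \in J(L)$ the set $uL$ is a group with identity $u$ (associativity is inherited, $u$ is a two-sided identity on $uL$ since for $p \in L$ we have $u(up) = up$, and inverses exist because $up \cdot L = uL$ by the surjectivity above, so some $v$ gives $up \cdot v = u$, and a standard argument upgrades this to a genuine two-sided inverse within $uL$). Then I would show $L = \bigsqcup_{u \in J(L)} uL$ is the partition of $L$ into these groups: every $p \in L$ lies in $up\,L \subseteq \ldots$ wait — more precisely, $p$ is fixed by some idempotent $u$ on the left as shown above, hence $p = up \in uL$; and the cosets $uL$, $u'L$ for distinct idempotents are disjoint because if $p \in uL \cap u'L$ then $up = p = u'p$ forces, after multiplying appropriately within the group structures, $u = u'$. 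I would also record that for idempotents $u, v \in J(L)$ one has $uv = v$ (since $v \in L = uL$ means $v = uw$ for some $w$, so $uv = u(uw) = uw = v$), which pins down the skew-product nature of the multiplication.

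The main obstacle I anticipate is purely bookkeeping: carefully verifying that $uL$ is a group — specifically producing two-sided inverses from the one-sided surjectivity $pL = L$ — without circularity, and handling the non-metrizable, merely right-topological setting so that all continuity invocations are for right multiplication only (left multiplication is not continuous). The existence of idempotents must route entirely through the Ellis--Numakura lemma applied to closed sub-semigroups; I would be careful that each set to which it is applied is genuinely closed (using continuity of right multiplication) and genuinely closed under the semigroup operation. Once these are in place, the remaining assertions of the proposition (that $\bigcup_{u \in J(L)} uL = L$, that the $uL$ are the maximal subgroups with identity the respective idempotent, and any enumerated identities such as $uv = v$ or $uvu = u$) follow by short direct manipulations, and I would simply list them in the order needed for the later applications in Section \ref{sec:minimal-subsystems-for}.
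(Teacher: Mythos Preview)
The paper does not prove this proposition at all: it simply cites \cite{glasner1976proximal} and \cite{dV93} for items (1), (2), (4), (5), (6), (7), and gives a one-line computation for item (3). So your plan to argue from scratch is a genuinely different route, and in principle a reasonable one since these are all standard Ellis--Numakura-type facts.

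That said, your sketch contains a real error. You write ``$v \in L = uL$ means $v = uw$ for some $w$, so $uv = v$.'' But $uL$ is \emph{not} all of $L$ --- indeed the whole point of item (5) is that the sets $uL$ for $u \in J(L)$ partition $L$ into proper pieces. What is true is $L = Eu$ (since $Eu$ is a closed left ideal contained in $L$, hence equal to it by minimality), from which one gets $vu = v$, not $uv = v$. This is exactly the content of item (3), and the direction matters throughout Section~\ref{sec:minimal-subsystems-for}. Your disjointness argument for the partition also leans on this and would need to be redone with the correct identity.

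You also do not address items (2), (4), (6), (7) at all --- minimality of points via idempotents, proximality of $(x,ux)$, the $G$-action on $E$ and $L$, and the evaluation map --- several of which are used explicitly in the proofs of Propositions~\ref{prop:minimality} and~\ref{prop:Yx is minimal} and Lemma~\ref{lem:useful}. If you intend a self-contained treatment, those need at least a sentence each; otherwise, matching the paper and citing \cite{glasner1976proximal} and \cite{dV93} is the cleaner choice.
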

\begin{enumerate}
\item $J(L)\neq\emptyset$.
\item A point $x\in X$ is minimal if and only if there exists $u\in J(L)$
with $ux=x$.
\item Let $u$ be an idempotent in $E$. If $p\in Eu$, then $pu=p$.
\item Let $x\in X$ and $u\in E$ an idempotent, then $(x,ux)\in\PP(X)$.
In particular there is a minimal point which is proximal to $x$.
\item $L=\bigcup_{u\in J(L)}uE$ is a partition and every $uL$ is a group
with identity $u$.
\item $(G,E)$ is a t.d.s and $(G,L)$ is a minimal subsystem.
\item Let $x_{0}\in X$, then the map $(G,E)\to(G,X)$ given by $p\mapsto px_{0}$
is a dynamical morphism.\end{enumerate}
\begin{proof}
(1) \cite[Proposition I.2.3(1)]{glasner1976proximal}. (2) \cite[Proposition I.3.1(2)]{glasner1976proximal}.
(3) If $p\in Eu$ then $p=qu$ for some $q\in E$. Thus $pu=(qu)u=q(uu)=qu=p$.
(4) \cite[proof of Proposition I.3.2(2)]{glasner1976proximal}. (5)
\cite[Proposition I.2.3(3)]{glasner1976proximal}. (6) \cite[IV(3.7)(2) and IV(3.2)(2)]{dV93}.
(7) \cite[IV(3.7)(4)]{dV93}.
\end{proof}

\subsection{Induced projections\label{sub:Projections}}

Let $E=E(\Hcal^{[d]},C_{G}^{[d]}(X))$ be the enveloping semigroup
of $(\Hcal^{[d]},C_{G}^{[d]}(X))$. Let $\pi_{\ep}:C_{G}^{[d]}(X)\to X_{\ep}=X$
be the projection of $C_{G}^{[d]}(X)$ on the $\epsilon$-coordinate,
where $\ep\in\{0,1\}^{d}$. We consider the action of the group $\Hcal^{[d]}$
on the $\ep$-coordinate via the projection $\pi_{\ep}$, i.e, for
$\ep\in\{0,1\}^{d}$:
\[
\Hcal^{[d]}\times X_{\ep}\to X_{\ep},\quad gx\mapsto g_{\ep}x.
\]
With respect to this action of $\Hcal^{[d]}$ on $X_{\ep}=X$ the
map $\pi_{\ep}:(\Hcal^{[d]},C_{G}^{[d]}(X))\to(\Hcal^{[d]},X_{\ep})$
is a dynamical morphism. Let $\pi_{\ep}^{\bullet}:E(\Hcal^{[d]},C_{G}^{[d]}(X))~\to~E(\Hcal^{[d]},X_{\ep})$
be the corresponding homomorphisms of enveloping semigroups. Notice
that for the action of $\Hcal^{[d]}$ on $X_{\ep}$, $E(\Hcal^{[d]},X_{\ep})=E(G,X)$
as subsets of $X^{X}$. We claim that an element of $E(\Hcal^{[d]},C_{G}^{[d]}(X))$
is determined by its projections. Indeed as every element of $\Hcal^{[d]}$ acts on $C_{G}^{[d]}(X)$ coordinatewise,  this is also true for the closure of $\Hcal^{[d]}$ inside $(C_{G}^{[d]}(X))^{C_{G}^{[d]}(X)}$  therefore $E(\Hcal^{[d]},C_{G}^{[d]}(X))$ may be identified with a subset of $E(G,X)^{[d]}$ and moreover $\Hcal^{[d]}$ acts on $E(\Hcal^{[d]},C_{G}^{[d]}(X))$ coordinatewise.

Let $x_{0}\in X$. Consider the ceiling map $\pi_{c}:X^{[d]}\to X^{[d-1]}$ from Subsection \ref{sub:Discrete-cubes}. Let us denote its restriction to $C_{x_{0}}^{[d]}(X)$ also by $\pi_{c}$. We thus have a continuous map $\pi_{c}:C_{x_{0}}^{[d]}(X)\to X^{[d-1]}$. Similarly we have a ceiling map $\pi'_{c}:G^{[d]}\to G^{[d-1]}$. Let us denote its restriction to $\Hcal^{[d]}$  by $\phi_{c}$. We thus have a \textit{continuous group homomorphism} $\phi_{c}:\Hcal^{[d]}\to G^{[d-1]}$.
\begin{lem}
\label{lem:induced group projections}
$\phi_{c}(\Fcal^{[d]})= \Hcal^{[d-1]}$.
\end{lem}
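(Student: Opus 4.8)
The plan is to reduce the statement to a direct computation on generating sets, exploiting that $\phi_{c}\colon\Hcal^{[d]}\to G^{[d-1]}$ is a group homomorphism (being the restriction of the ceiling projection $\pi'_{c}\colon G^{[d]}\to G^{[d-1]}$). Recall that $\Fcal^{[d]}$ is generated by the elements $[g]_{F_{i}}$ with $g\in G$, where $F_{i}=\{\omega\in\{0,1\}^{d}:\omega_{i}=1\}$ ranges over the $d$ upper hyperfaces of $\{0,1\}^{d}$. Since $\phi_{c}$ is a homomorphism, $\phi_{c}(\Fcal^{[d]})$ is the subgroup of $G^{[d-1]}$ generated by the images $\phi_{c}([g]_{F_{i}})$, so it suffices to identify these images.

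Next I would compute $\phi_{c}([g]_{F_{i}})$ by unwinding the definition of the ceiling map, which restricts a configuration in $G^{[d]}$ to the slice $\{\omega:\omega_{d}=1\}$ and reindexes it by $\{0,1\}^{d-1}$ via $\epsilon\mapsto\epsilon 1$. Two cases arise. If $i=d$, the face $F_{d}$ is exactly this slice, so $[g]_{F_{d}}(\epsilon 1)=g$ for every $\epsilon\in\{0,1\}^{d-1}$; thus $\phi_{c}([g]_{F_{d}})=[g]_{\{0,1\}^{d-1}}$, and as $g$ runs over $G$ these exhaust $\Del^{[d-1]}(G)$. If $i<d$, then under $\omega=\epsilon 1$ the condition $\omega_{i}=1$ becomes $\epsilon_{i}=1$, so $\phi_{c}([g]_{F_{i}})=[g]_{F_{i}'}$, where $F_{i}'=\{\epsilon\in\{0,1\}^{d-1}:\epsilon_{i}=1\}$ is the $i$-th upper hyperface of $\{0,1\}^{d-1}$.

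Consequently the image under $\phi_{c}$ of the chosen generating set of $\Fcal^{[d]}$ is $\bigl(\bigcup_{i<d}[G]_{F_{i}'}\bigr)\cup\Del^{[d-1]}(G)$. The subgroup it generates is, by definition, the subgroup generated by $\Fcal^{[d-1]}$ together with $\Del^{[d-1]}(G)$, which equals $\Hcal^{[d-1]}$ by the observation recorded in Subsection \ref{sub:Host-Kra-cube-group}. This gives $\phi_{c}(\Fcal^{[d]})=\Hcal^{[d-1]}$.

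I do not expect a genuine obstacle here; the only points requiring care are the index bookkeeping --- it is precisely the $d$-th upper hyperface that collapses onto the diagonal under $\phi_{c}$, which is what accounts for the diagonal factor distinguishing $\Hcal^{[d-1]}$ from $\Fcal^{[d-1]}$ --- and the degenerate case $d=1$, where $\{0,1\}^{0}$ is a single point, $\Hcal^{[0]}=G^{[0]}=G$, and the claim reduces to the trivial identity $\phi_{c}(\Fcal^{[1]})=\{g:g\in G\}=G$.
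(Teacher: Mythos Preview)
Your proof is correct and follows essentially the same approach as the paper: compute $\phi_{c}$ on the generators $[g]_{F_{i}}$ of $\Fcal^{[d]}$, observe that the $d$-th upper hyperface maps to the diagonal $\Del^{[d-1]}(G)$ while the others map to the upper hyperfaces of $\{0,1\}^{d-1}$, and conclude via the fact that $\Hcal^{[d-1]}$ is generated by $\Fcal^{[d-1]}$ together with $\Del^{[d-1]}(G)$ (the paper cites Proposition~\ref{prop:H=00003DFDiag} for this last step, but the weaker generation statement you invoke from Subsection~\ref{sub:Host-Kra-cube-group} suffices).
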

\begin{proof}
Let $F_{i}=\{\o\in\{0,1\}^{d}|\ \o_{i}=1\}$, $1\leq i\leq d$, be the upper hyperfaces of $\{0,1\}^{d}$. Define the projection $\widehat{}:\{0,1\}^{d}\to \{0,1\}^{d-1}$ by $\widehat{x_1x_2\cdots x_d}\mapsto x_1\cdots x_{d-1}$. As $\Fcal^{[d]}$ is generated by $[G]_{F_{i}}$, $\phi_{c}(\Fcal^{[d]})$ is generated by $\phi_{c}([G]_{F_{i}})=[G]_{\widehat{F_{i}}}$ for $1\leq i\leq d$.
Note $\widehat{F_{1}},\ldots \widehat{F_{d-1}}$ are the upper hyperfaces of $\{0,1\}^{d-1}$, whereas $[G]_{\widehat{F_{d}}}=\{0,1\}^{d-1}$ and thus $[G]_{\widehat{F_{d}}}=\Del^{[d-1]}(G)$. Thus by Proposition \ref{prop:H=00003DFDiag}, $\phi_{c}(\Fcal^{[d]})=\Hcal^{[d-1]}$.
\end{proof}

\begin{lem}
\label{lem:induced projections range}
Let $(G,X)$ be a minimal t.d.s, then $\pi_{c}(C_{x_{0}}^{[d]}(X))\subset C_{G}^{[d-1]}(X)$.
\end{lem}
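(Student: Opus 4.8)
The plan is to pull everything back to the group level via the continuous homomorphism $\phi_{c}:\Hcal^{[d]}\to G^{[d-1]}$ and the identity $C_{G}^{[d]}(X)=\overline{\{gx_{0}^{[d]}:g\in\Hcal^{[d]}\}}$ provided by Proposition~\ref{prop:face rep for Cd}. First I would fix $c\in C_{x_{0}}^{[d]}(X)\subseteq C_{G}^{[d]}(X)$ and choose a net $(g_{i})$ in $\Hcal^{[d]}$ with $g_{i}x_{0}^{[d]}\to c$. Since $\pi_{c}$ is continuous, it is enough to analyse $\pi_{c}(g_{i}x_{0}^{[d]})$, so the next step is to record the elementary but crucial compatibility identity
\[
\pi_{c}\bigl(gx_{0}^{[d]}\bigr)=\phi_{c}(g)\,x_{0}^{[d-1]}\qquad\bigl(g\in\Hcal^{[d]}\bigr),
\]
which follows immediately from the coordinatewise action of $\Hcal^{[d]}$ on $X^{[d]}$: for $\ep\in\{0,1\}^{d-1}$ one has $\pi_{c}(gx_{0}^{[d]})_{\ep}=(gx_{0}^{[d]})_{\ep1}=g(\ep1)\,x_{0}=\phi_{c}(g)(\ep)\,x_{0}$, since $\phi_{c}$ is nothing but the ceiling map $\pi'_{c}$ restricted to $\Hcal^{[d]}$.

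The point that then needs care is that $\phi_{c}(g_{i})$ lies not merely in $G^{[d-1]}$ but in $\Hcal^{[d-1]}$; this is exactly where Lemma~\ref{lem:induced group projections} enters, together with Proposition~\ref{prop:H=00003DFDiag}. Writing $\Hcal^{[d]}=\Fcal^{[d]}\Del^{[d]}(G)$ and using $\phi_{c}(\Fcal^{[d]})=\Hcal^{[d-1]}$ together with $\phi_{c}(\Del^{[d]}(G))=\Del^{[d-1]}(G)\subseteq\Hcal^{[d-1]}$, one gets $\phi_{c}(\Hcal^{[d]})=\Hcal^{[d-1]}$. Hence each $\phi_{c}(g_{i})\,x_{0}^{[d-1]}$ belongs to $\{gx_{0}^{[d-1]}:g\in\Hcal^{[d-1]}\}\subseteq C_{G}^{[d-1]}(X)$.

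Finally I would pass to the limit: by continuity of $\pi_{c}$,
\[
\pi_{c}(c)=\lim_{i}\pi_{c}\bigl(g_{i}x_{0}^{[d]}\bigr)=\lim_{i}\phi_{c}(g_{i})\,x_{0}^{[d-1]}\in\overline{\{gx_{0}^{[d-1]}:g\in\Hcal^{[d-1]}\}}=C_{G}^{[d-1]}(X),
\]
the last equality again by Proposition~\ref{prop:face rep for Cd} when $d\geq2$, while for $d=1$ the statement is trivial since $C_{G}^{[0]}(X)=X$. I do not expect a genuine obstacle here: the real work is already packaged in Lemma~\ref{lem:induced group projections}, and the only delicate point is the identity $\pi_{c}(gx_{0}^{[d]})=\phi_{c}(g)x_{0}^{[d-1]}$ — that is, that taking the ceiling of a cube commutes with the coordinatewise action — which is transparent once the two ceiling maps, on $X^{[d]}$ and on $\Hcal^{[d]}$, are matched up.
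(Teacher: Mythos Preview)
Your proof is correct and follows essentially the same route as the paper: both arguments establish $\phi_{c}(\Hcal^{[d]})=\Hcal^{[d-1]}$, invoke Proposition~\ref{prop:face rep for Cd} to describe $C_{G}^{[d]}(X)$ and $C_{G}^{[d-1]}(X)$ as orbit closures of $x_{0}^{[d]}$ and $x_{0}^{[d-1]}$, and use the compatibility $\pi_{c}(gx_{0}^{[d]})=\phi_{c}(g)x_{0}^{[d-1]}$ to pass from one to the other. The only cosmetic difference is that the paper phrases this as a single chain of equalities of closures, while you work pointwise with a net; your derivation of $\phi_{c}(\Hcal^{[d]})=\Hcal^{[d-1]}$ via Lemma~\ref{lem:induced group projections} and Proposition~\ref{prop:H=00003DFDiag} is a perfectly valid alternative to the paper's ``similar argument'' remark.
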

\begin{proof}
It follows from Proposition \ref{prop ap:cube inv} but let us give a direct proof. Clearly it is enough to prove $\pi_{c}(C_{G}^{[d]}(X))= C_{G}^{[d-1]}(X)$. By an argument similar to the proof of Lemma \ref{lem:induced group projections}, $\phi_{c}(\Hcal^{[d]})= \Hcal^{[d-1]}$. Using Proposition \ref{prop:face rep for Cd} twice we have as desired:

\begin{align*}
\pi_{c}(C_{G}^{[d-1]}(X))=\pi_{c}(\overline{\{gx_{0}^{[d]}|g\in\Hcal^{[d]}\}})=\overline{\{\phi_{c}(g)x_{0}^{[d-1]}|g\in\Hcal^{[d]}\}}  \\
= \overline{\{gx_{0}^{[d-1]}|g\in\Hcal^{[d-1]}\}}=C_{G}^{[d-1]}(X)
\end{align*}
\end{proof}

Let $(H,X)$ and $(H',X')$ be t.d.s where possibly $H\neq H'$. Let us say that a pair of maps $(f,\phi)$ is a dynamical morphism between
$(H,X)$ and $(H',X')$ if $f:X\to X'$ is a continuous map,  $\phi:H\to H'$ is a continuous group homomorphism and for all $x\in X$ and $g\in H$, $f(gx)=\phi(g)f(x)$.
The next simple lemma will be used  in the next subsection.
\begin{lem}
\label{lem:induced projections} Let $(G,X)$ be a minimal t.d.s, then the pair $(\pi_{c},\phi_{c})$ is a dynamical morphism between $(\Fcal^{[d]},C_{x_{0}}^{[d]}(X))$ and $(\Hcal^{[d-1]},C_{G}^{[d-1]}(X))$.
\end{lem}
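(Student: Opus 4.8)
The plan is to read the statement off from Lemma~\ref{lem:induced group projections} and Lemma~\ref{lem:induced projections range}, supplemented by a single coordinatewise check of equivariance. Measured against the definition of a dynamical morphism between systems with possibly different acting groups given just above, three things must be verified: that $\phi_{c}$ restricts to a continuous group homomorphism $\Fcal^{[d]}\to\Hcal^{[d-1]}$; that $\pi_{c}$ is a continuous map $C_{x_{0}}^{[d]}(X)\to C_{G}^{[d-1]}(X)$ which is equivariant for these actions; and, implicitly, that the $\Fcal^{[d]}$-action genuinely preserves $C_{x_{0}}^{[d]}(X)$, so that the source $(\Fcal^{[d]},C_{x_{0}}^{[d]}(X))$ is a well-defined t.d.s.

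For the group homomorphism: $\phi_{c}\colon\Hcal^{[d]}\to G^{[d-1]}$ is already a continuous group homomorphism, being the restriction of the coordinate projection $\pi'_{c}$, so its restriction to the subgroup $\Fcal^{[d]}$ is one as well, and Lemma~\ref{lem:induced group projections} identifies its image as exactly $\Hcal^{[d-1]}$. For $\pi_{c}$: it is continuous as a restriction of a coordinate projection, and Lemma~\ref{lem:induced projections range} gives $\pi_{c}(C_{x_{0}}^{[d]}(X))\subset C_{G}^{[d-1]}(X)$. For the invariance of the fiber I would note that every $\gamma\in\Fcal^{[d]}$ satisfies $\gamma(\vec 0)=\Id$, hence fixes the $\vec 0$-coordinate; since $\Fcal^{[d]}\subset\Hcal^{[d]}$ and $C_{G}^{[d]}(X)$ is $\Hcal^{[d]}$-invariant, $\gamma$ maps $C_{x_{0}}^{[d]}(X)=C_{G}^{[d]}(X)\cap(\{x_{0}\}\times X_{*}^{[d]})$ into itself.

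It remains to check equivariance. Indexing the ceiling by $\epsilon\in\{0,1\}^{d-1}$ and using that $\Hcal^{[d]}$ acts coordinatewise on $X^{[d]}$, that $\Hcal^{[d-1]}$ acts coordinatewise on $X^{[d-1]}$, and that $\phi_{c}(\gamma)$ and $\pi_{c}(c)$ are by definition the ceilings of $\gamma$ and $c$, one computes for $\gamma\in\Fcal^{[d]}$ and $c\in C_{x_{0}}^{[d]}(X)$:
\[
\big(\pi_{c}(\gamma c)\big)_{\epsilon}=(\gamma c)(\epsilon 1)=\gamma(\epsilon 1)\,c(\epsilon 1)=\big(\phi_{c}(\gamma)\big)_{\epsilon}\big(\pi_{c}(c)\big)_{\epsilon}=\big(\phi_{c}(\gamma)\,\pi_{c}(c)\big)_{\epsilon}.
\]
Since this holds for all $\epsilon$, we get $\pi_{c}(\gamma c)=\phi_{c}(\gamma)\,\pi_{c}(c)$, completing the verification. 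I do not anticipate a real obstacle here: the entire content is already contained in the two preceding lemmas, and the only step deserving a moment's attention is the remark that one must use the \emph{face} cube group $\Fcal^{[d]}$ rather than the full Host--Kra group $\Hcal^{[d]}$ in the source, precisely because only the former stabilizes the fiber of $C_{G}^{[d]}(X)$ over $x_{0}$.
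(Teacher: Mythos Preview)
Your proof is correct and follows essentially the same approach as the paper: invoke Lemmas~\ref{lem:induced group projections} and~\ref{lem:induced projections range} for the two maps, then check the equivariance identity $\pi_{c}(gb)=\phi_{c}(g)\pi_{c}(b)$ coordinatewise. You simply spell out the equivariance computation and the fiber-invariance of $\Fcal^{[d]}$ where the paper leaves these as ``easy to see''.
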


\begin{proof}
By Lemmas \ref{lem:induced group projections} and \ref{lem:induced projections range} respectively $\pi_{c}:C_{x_{0}}^{[d]}(X)\to C_{G}^{[d-1]}(X)$ is a continuous map and  $\phi_{c}:\Fcal^{[d]}\to \Hcal^{[d-1]}$ is a continuous group homomorphism. Finally it is easy to see for all $b\in C_{x_{0}}^{[d]}(X)$ and $g\in \Fcal^{[d]}$, $\pi_{c}(gb)=\phi_{c}(g)\pi_{c}(b)$.
\end{proof}

\subsection{Minimal actions}

In \cite[Lemma 4.1]{HKM10} it was proven that $(\Hcal^{[d]},C_{\mathbb{Z}}^{[d]}(X))$
is minimal for $(\mathbb{{Z}},X)$ minimal and distal t.d.s. It was
also mentioned that Glasner had shown (unpublished) that one can remove
the distality assumption. Here we show that the same statement holds
for a general group action. We note that the essential feature of $\Hcal^{[d]}$ which is used in the proof is that it contains the diagonal, i.e. $\Del^{[d]}\subset\Hcal^{[d]}$.
\begin{prop}
\label{prop:minimality} Let $(G,X)$ be a minimal t.d.s, then the
t.d.s $(\Hcal^{[d]},C_{G}^{[d]}(X))$ is minimal. \end{prop}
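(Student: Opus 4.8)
The plan is to prove minimality of $(\Hcal^{[d]},C_{G}^{[d]}(X))$ by exhibiting, for an arbitrary point $c\in C_{G}^{[d]}(X)$, a sequence (net) of elements of $\Hcal^{[d]}$ pushing $c$ arbitrarily close to a fixed constant configuration $x_{0}^{[d]}$; since $C_{G}^{[d]}(X)$ is by definition the orbit closure of the diagonal $\Del^{[d]}(X)$ under $\Hcal^{[d]}$, and by Proposition \ref{prop:face rep for Cd} equals $\overline{\{g x_0^{[d]}:g\in\Hcal^{[d]}\}}$, showing that the orbit closure of any $c$ contains $x_0^{[d]}$ will force $\overline{\Hcal^{[d]}c}=C_{G}^{[d]}(X)$ for all $c$, which is minimality. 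I would proceed by induction on $d$, the base case $d=0$ (or $d=1$) being the minimality of $(G,X)$ itself, together with the easy observation that $\Del^{[d]}\subset\Hcal^{[d]}$ acts as the original $G$-action diagonally.

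For the inductive step, I would use the product decomposition $X^{[d]}=X^{[d-1]}\times X^{[d-1]}$ into floor and ceiling, and the fact (Lemma \ref{lem:induced projections}, together with $\phi_{c}(\Hcal^{[d]})=\Hcal^{[d-1]}$ as noted in the proof of Lemma \ref{lem:induced projections range}) that the ceiling projection $(\pi_c,\phi_c):(\Hcal^{[d]},C_{G}^{[d]}(X))\to(\Hcal^{[d-1]},C_{G}^{[d-1]}(X))$ is a dynamical morphism onto the lower-order cubespace, which is minimal by the inductive hypothesis. Given $c=(c_f,c_c)\in C_{G}^{[d]}(X)$, first I use minimality of $(\Hcal^{[d-1]},C_{G}^{[d-1]}(X))$ applied to the ceiling $c_c$ to find $g_i\in\Hcal^{[d]}$ with $\phi_c(g_i)c_c\to x_0^{[d-1]}$, i.e. the ceilings of $g_i c$ converge to the constant configuration $x_0^{[d-1]}$; passing to a subnet we may assume $g_i c\to c'$ with ceiling $x_0^{[d-1]}$. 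Now $c'$ lies in $C_{G}^{[d]}(X)$ and has constant ceiling, so $c'$ is (after identifying) a configuration whose top half is $x_0^{[d-1]}$; the remaining task is to move the floor of such a configuration to $x_0^{[d-1]}$ as well, while keeping the ceiling fixed at $x_0^{[d-1]}$.

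To move the floor while fixing the ceiling, I would use the lower face cube group — the subgroup of $\Hcal^{[d]}$ generated by $[G]_F$ over the lower hyperfaces — which acts nontrivially on the floor coordinates but trivially on the ceiling coordinates (since a lower hyperface $\{\omega:\omega_i=0\}$ is disjoint from the ceiling $\{\omega:\omega_d=1\}$ when $i=d$, and similar bookkeeping handles the other lower hyperfaces). The key sub-claim is that the stabilizer of the ceiling value $x_0^{[d-1]}$, acting on the fiber of configurations with that fixed ceiling, acts minimally; for this I would again appeal to the structure: configurations in $C_{G}^{[d]}(X)$ with constant ceiling $x_0^{[d-1]}$ form a set on which the lower face group acts, and this action is (isomorphic to) the $\Hcal^{[d-1]}$-action on $C_{G}^{[d-1]}(X)$ via the floor projection, hence minimal by induction. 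Combining: find $h_i$ in the lower face group with $h_i c'\to x_0^{[d]}$, so that $x_0^{[d]}\in\overline{\Hcal^{[d]}c}$, completing the induction.

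The main obstacle I anticipate is the precise handling of how the floor and ceiling actions interact — a general element of $\Hcal^{[d]}$ does not respect the floor/ceiling splitting as a direct product, and one must be careful that after fixing the ceiling at $x_0^{[d-1]}$ the elements used to adjust the floor genuinely preserve the ceiling; this is exactly where the decomposition $\Hcal^{[d]}=\Fcal^{[d]}\Del^{[d]}(G)$ (Proposition \ref{prop:H=00003DFDiag}) and the structure of lower versus upper hyperfaces must be used carefully, and where a clean identification of the ceiling-fiber with $C_{G}^{[d-1]}(X)$ under the floor map (paralleling Lemmas \ref{lem:induced projections range} and \ref{lem:induced projections} but for the floor rather than the ceiling) must be established. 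A secondary technical point is that $E=E(\Hcal^{[d]},C_{G}^{[d]}(X))$ is generally non-metrizable, so all limits should be taken along nets, or one reduces to the metrizable case as the paper permits; using idempotents in a minimal left ideal of $E$ (Proposition \ref{prop:Ellis fundamentals}) gives a clean net-free way to realize these limits as actual elements of $E$ acting on $C_{G}^{[d]}(X)$.
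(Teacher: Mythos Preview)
Your inductive, floor/ceiling strategy is different from the paper's proof and, as written, has a genuine gap in the second step. You claim that the ``lower face cube group'' --- the subgroup of $\Hcal^{[d]}$ generated by $[G]_{F}$ as $F$ ranges over the \emph{lower} hyperfaces $\{\omega:\omega_i=0\}$ --- acts trivially on the ceiling. This is false: only the single lower hyperface with $i=d$ (the floor itself) is disjoint from the ceiling $\{\omega:\omega_d=1\}$; for $i<d$ the lower hyperface $\{\omega:\omega_i=0\}$ meets the ceiling in a $(d-2)$-face, and $[g]_{F}$ genuinely moves those ceiling coordinates. Hence the only elements of your ``lower face group'' that preserve the ceiling are the diagonals $[g]_{\text{floor}}=(g^{[d-1]},\Id^{[d-1]})$, whose action on the floor is merely the diagonal $G$-action on $C_{G}^{[d-1]}(X)$, which is \emph{not} minimal in general. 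Your proposed identification of the ceiling-fibre action with the full $\Hcal^{[d-1]}$-action on $C_{G}^{[d-1]}(X)$ therefore fails, and the induction does not close. (One can try to repair this by alternating floor and ceiling moves, but there is no obvious reason the procedure terminates; the paper in fact uses Proposition~\ref{prop:minimality} as an \emph{input} to the more delicate fibre arguments in Lemma~\ref{lem:useful} and Theorem~\ref{thm:unique minimal}, rather than the other way around.)

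The paper's proof avoids induction on $d$ entirely and is a pure enveloping-semigroup argument: pick a minimal idempotent $u\in E(G,X)$ with $ux=x$, observe that $\tilde u=u^{[d]}$ is an idempotent of $E(\Hcal^{[d]},C_{G}^{[d]}(X))$ (because $\Del^{[d]}(G)\subset\Hcal^{[d]}$), and show $\tilde u$ is a \emph{minimal} idempotent. For this, take any minimal idempotent $v$ in the left ideal $E\tilde u$; then $v\tilde u=v$, and one checks $\tilde u v=\tilde u$ coordinatewise: each projection $v_\epsilon=\pi_\epsilon^\bullet v$ is an idempotent in the minimal left ideal $E(G,X)u$, hence $uv_\epsilon=u$ by Proposition~\ref{prop:Ellis fundamentals}. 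Since $\tilde u x^{[d]}=x^{[d]}$, the orbit closure $C_{G}^{[d]}(X)$ is minimal. The essential point is that minimality of idempotents is detected on each coordinate separately, which sidesteps precisely the floor/ceiling interaction that blocks your argument.
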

\begin{proof}
Let $x\in X$ and let $u$ be a minimal idempotent in $E(G,X)$ with
$ux=x$ (Proposition \ref{prop:Ellis fundamentals}(2)). Then $\tilde{u}\triangleq u^{[d]}\in E(\Hcal^{[d]},C_{G}^{[d]}(X))$
and $\tilde{u}$ is an idempotent. Our goal is to show that $\tilde{u}$
is a minimal idempotent of $E(\Hcal^{[d]},C_{G}^{[d]}(X))$. Given
that this is true, as $x^{[d]}=\tilde{u}x^{[d]}$, by Proposition
\ref{prop:Ellis fundamentals}(2), $C_{G}^{[d]}(X)$ which is the
orbit closure of $x^{[d]}$, is $\Hcal^{[d]}$-minimal as desired.
Choose $v$ a minimal idempotent in the closed left ideal $E(\Hcal^{[d]},C_{G}^{[d]}(X))\tilde{u}$
(Proposition \ref{prop:Ellis fundamentals}(1)). As $\tilde{u}$ is
an idempotent, $v\tilde{u}=v$ (Proposition \ref{prop:Ellis fundamentals}(3)).
We will show that $\tilde{u}v=\tilde{u},$ which implies that the
idempotent $\tilde{u}$ belongs to the minimal left ideal $E(\Hcal^{[d]},C_{G}^{[d]}(X))v$
and thus is minimal. Set, for $\epsilon\in\{0,1\}^{d},\ v_{\epsilon}=\pi_{\epsilon}^{\bullet}v$
($\pi_{\epsilon}^{\bullet}$ is defined in Subsection \ref{sub:Projections}).
Note that, as an element of $E(\Hcal^{[d]},C_{G}^{[d]}(X))$ is determined
by its projections, it suffices to show that for each $\epsilon,\ uv_{\epsilon}=u$.
Since for each $\epsilon$ the map $\pi_{\epsilon}^{\bullet}$ is
a semigroup homomorphism, we have that $v_{\epsilon}u=v_{\epsilon}$
as $v\tilde{u}=v$, and
%at
$v_{\epsilon}v_{\epsilon}=v_{\epsilon}$
as $vv=v$. In particular we deduce that $v_{\epsilon}$ is an idempotent
belonging to the minimal left ideal $E(G,X_{\epsilon})u=E(G,X)u$
and thus $u\in E(G,X_{\epsilon})v_{\epsilon}$ by Proposition \ref{prop:Ellis fundamentals}(6). By Proposition \ref{prop:Ellis fundamentals}(3),
this implies that $uv_{\epsilon}=u$ and it follows that indeed $\tilde{u}v=\tilde{u}$.%
\end{proof}

Define:

\[
Y_{x}^{[d]}(X)=\ol{\Fcal^{[d]}(x^{[d]})}\subset C_{x}^{[d]}(X)
\]

\[
Y_{x*}^{[d]}(X)=\pi_{*}(Y_{x}^{[d]})\subset C_{x*}^{[d]}(X)
\]

In \cite[Proposition 4.2]{HKM10} it was proven that $Y_{x}^{[d]}(X)=C_{x}^{[d]}(X)$ and it clearly follows that
for $(\mathbb{{Z}},X)$ minimal and distal,
for each $x\in X$, the system
$(Y_{x}^{[d]}(X),Y_{x}^{[d]}(X))$ is minimal.
In \cite[Theorem 3.1]{SY12}
 it was shown, using
 the structure theory of minimal systems,
 that for abelian group actions, for
 each $x\in X$, the system $(\Fcal^{[d]},Y_{x}^{[d]}(X))$ is minimal.
 Here we show that the same statement holds for a general
group action using only enveloping semigroup arguments. We start by an auxiliary lemma:
\begin{lem}
\label{lem:idempotent}Let $u\in E(G,X)$ be an idempotent. Then $u_{*}^{[d]}\in E(\Fcal^{[d]},Y_{x*}^{[d]}(X))$. \end{lem}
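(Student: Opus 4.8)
The plan is to show that $u_*^{[d]}$, regarded as an element of $X_*^{[d]}$-valued maps, actually arises as a limit of elements of $\Fcal^{[d]}$ acting on $Y_{x*}^{[d]}(X)$. The key observation is that $u \in E(G,X)$ is a limit of a net $g_\alpha \in G$ in $X^X$, i.e. $g_\alpha \to u$ pointwise. I would like to produce a corresponding net in $\Fcal^{[d]}$ that converges to $u_*^{[d]}$ in $(X_*^{[d]})^{Y_{x*}^{[d]}(X)}$. The natural candidate is $[g_\alpha]_{F}$ for a suitable face, or more precisely a product over upper hyperfaces; but the cleanest choice is to use the element of $\Fcal^{[d]}$ whose $\epsilon$-coordinate is $g_\alpha$ for every $\epsilon \neq \vec 0$ and $\Id$ at $\vec 0$ — however one must check this element lies in $\Fcal^{[d]}$.

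First I would recall that $\Fcal^{[d]}$ contains, for each upper hyperface $F_i = \{\omega : \omega_i = 1\}$, the elements $[g]_{F_i}$, and the product $[g]_{F_1}[g]_{F_2}\cdots[g]_{F_d}$ has $\epsilon$-coordinate equal to $g^{k}$ where $k = \#\{i : \epsilon_i = 1\}$ is the number of $1$'s in $\epsilon$ — this is not what we want directly. Instead, since $u^2 = u$, the diagonal-type behaviour we need can be extracted as follows: the element $[u]_{F_i}$ makes sense inside $E(\Fcal^{[d]}, C_G^{[d]}(X))$ (or rather its projection to the relevant coordinates) because each $[g_\alpha]_{F_i}$ lies in $\Fcal^{[d]}$ and we may pass to a subnet so that $[g_\alpha]_{F_i}$ converges in the enveloping semigroup. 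Then $\prod_{i=1}^d [u]_{F_i}$ is an element of $E(\Fcal^{[d]}, \cdot)$ whose $\epsilon$-coordinate is the composition of $u$ with itself $\#\{i: \epsilon_i=1\}$ times acting on the $\epsilon$-coordinate; and because $u$ is idempotent, $u \circ u \circ \cdots \circ u = u$, so this coordinate is exactly $u$ for every $\epsilon \neq \vec 0$ (and is $\Id$ at $\vec 0$, since every $F_i$ misses $\vec 0$). Hence $\prod_{i=1}^d [u]_{F_i}$ restricted/projected to the starred coordinates equals $u_*^{[d]}$.

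The remaining point is to make sure this element acts on, and preserves, $Y_{x*}^{[d]}(X)$ rather than all of $X_*^{[d]}$. Since $Y_x^{[d]}(X) = \ol{\Fcal^{[d]}(x^{[d]})}$ is by definition $\Fcal^{[d]}$-invariant and closed, it is a subsystem of $(\Fcal^{[d]}, X^{[d]})$, so its enveloping semigroup $E(\Fcal^{[d]}, Y_x^{[d]}(X))$ is a quotient of (a sub-semigroup image of) $E(\Fcal^{[d]}, X^{[d]})$; the image of $\prod_{i=1}^d [u]_{F_i}$ there still has the same coordinatewise description because the $\Fcal^{[d]}$-action on $Y_x^{[d]}(X)$ is coordinatewise, as noted in Subsection \ref{sub:Projections}. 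Projecting via $\pi_*$ gives an element of $E(\Fcal^{[d]}, Y_{x*}^{[d]}(X))$, and its coordinates are $u$ at every $\epsilon \neq \vec 0$, i.e. it equals $u_*^{[d]}$. Therefore $u_*^{[d]} \in E(\Fcal^{[d]}, Y_{x*}^{[d]}(X))$, as claimed.

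The main obstacle I anticipate is the bookkeeping needed to justify that $[u]_{F_i}$ "makes sense" inside the enveloping semigroup and that the product of these formal idempotents computes coordinatewise as claimed: one must pass to subnets carefully (right multiplication is continuous but left multiplication is not, so the order of taking limits matters), and verify that the enveloping semigroup of $(\Fcal^{[d]}, Y_{x*}^{[d]}(X))$ indeed embeds coordinatewise into a product of copies of $E(G,X)$ just as in the argument of Subsection \ref{sub:Projections}. Once that identification is in place, the idempotency of $u$ does all the work and the computation $u \circ \cdots \circ u = u$ is immediate.
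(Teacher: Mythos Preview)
Your proposal is correct and is essentially the paper's own argument: take a net $t_\alpha\to u$ in $E(G,X)$, form $[u]_{F_i}\in E(\Fcal^{[d]},Y_x^{[d]}(X))$ as the limit of $[t_\alpha]_{F_i}$, and then build up $[u]_{F_1\cup\cdots\cup F_d}=u^{[d]}$ one hyperface at a time, using right-continuity of multiplication together with $u^2=u$ to collapse the repeated $u$'s at each coordinate. The ``main obstacle'' you flag---that one must iterate the limits using right-continuity rather than take them simultaneously---is precisely the point the paper makes explicit, so your write-up would only need to spell out that inductive step cleanly.
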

\begin{proof}
Enumerate the upper hyperfaces of $\{0,1\}^{d}$ by $F_{1},F_{2},\ldots,F_{d}$.
Let $t_{\alpha}\in G$ be a net in $G$ such that $t_{\alpha}\rightarrow_{\alpha}u$
in $E(G,X)$. As $[t_{i}]_{F_{1}}\in\Fcal^{[d]}$, we have $[u]_{F_{1}}\in E(\Fcal^{[d]},Y_{x}^{[d]}(X))$.
As $[t_{i}]_{F_{2}}[u]_{F_{1}}\in E(\Fcal^{[d]},Y_{x}^{[d]}(X))$,
$E\rightarrow E$, $p\mapsto pq$ is continuous and $u^{2}=u$, we
have $[u]_{F_{1}\cup F_{2}}\in E(\Fcal^{[d]},Y_{x}^{[d]}(X))$. We
now continue similarly for $F_{3},F_{4},\ldots,F_{d}$.
\end{proof}

\begin{prop}\label{prop:Yx is minimal}
Let $(G,X)$ be a minimal t.d.s, then for each $x\in X$, the t.d.s
$(\Fcal^{[d]},Y_{x}^{[d]}(X))$, and hence also $(\Fcal^{[d]},Y_{x*}^{[d]}(X))$,
are minimal. \end{prop}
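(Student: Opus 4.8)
The plan is to mimic the enveloping-semigroup argument used in the proof of Proposition \ref{prop:minimality}, but now relative to the face cube group $\Fcal^{[d]}$ and the system $Y_{x}^{[d]}(X)$, using Lemma \ref{lem:idempotent} as the crucial new input that replaces the trivial observation "$u^{[d]}\in E(\Hcal^{[d]},C_G^{[d]}(X))$" there. First I would let $u$ be a minimal idempotent of $E(G,X)$ with $ux=x$ (Proposition \ref{prop:Ellis fundamentals}(2)); such a $u$ exists since $X$ is minimal. By Lemma \ref{lem:idempotent}, $\tilde{u}\triangleq u_{*}^{[d]}$ — or rather the element of $E(\Fcal^{[d]},Y_{x}^{[d]}(X))$ whose $\ep$-projection for $\ep\neq\vec0$ is $u$ and whose $\vec0$-projection is $u$ as well, since $\Fcal^{[d]}$ fixes the $\vec0$-coordinate — lies in $E(\Fcal^{[d]},Y_{x}^{[d]}(X))$, and it is plainly an idempotent. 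Since $\tilde{u}\,x^{[d]}=x^{[d]}$ and $Y_{x}^{[d]}(X)=\ol{\Fcal^{[d]}(x^{[d]})}$ is precisely the orbit closure of $x^{[d]}$, by Proposition \ref{prop:Ellis fundamentals}(2) it suffices to show that $\tilde{u}$ is a \emph{minimal} idempotent of $E(\Fcal^{[d]},Y_{x}^{[d]}(X))$.

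To prove minimality of $\tilde{u}$, I would repeat verbatim the second half of the proof of Proposition \ref{prop:minimality}: choose a minimal idempotent $v$ in the closed left ideal $E(\Fcal^{[d]},Y_{x}^{[d]}(X))\tilde{u}$; then $v\tilde{u}=v$ by Proposition \ref{prop:Ellis fundamentals}(3), and it remains to show $\tilde{u}v=\tilde{u}$, which forces $\tilde{u}$ into the minimal left ideal $E(\Fcal^{[d]},Y_{x}^{[d]}(X))v$. For each $\ep\in\{0,1\}^{d}$ set $v_{\ep}=\pi_{\ep}^{\bullet}v\in E(G,X)$, using that an element of $E(\Fcal^{[d]},Y_{x}^{[d]}(X))$ is determined by its coordinate projections (the same coordinatewise-action argument as in Subsection \ref{sub:Projections} applies, since $\Fcal^{[d]}\subset\Hcal^{[d]}$ acts coordinatewise). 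Since each $\pi_{\ep}^{\bullet}$ is a semigroup homomorphism, from $v\tilde{u}=v$ and $vv=v$ we get $v_{\ep}u=v_{\ep}$ and $v_{\ep}v_{\ep}=v_{\ep}$, so $v_{\ep}$ is an idempotent in the minimal left ideal $E(G,X)u$; hence by Proposition \ref{prop:Ellis fundamentals}(6) (applied to the minimal system $(G,L)$ with $L=E(G,X)u$) we have $u\in E(G,X)v_{\ep}$, so by Proposition \ref{prop:Ellis fundamentals}(3), $uv_{\ep}=u$ for every $\ep$. Therefore $\tilde{u}v=\tilde{u}$, and $\tilde{u}$ is minimal. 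The minimality of $(\Fcal^{[d]},Y_{x*}^{[d]}(X))$ then follows immediately, since $Y_{x*}^{[d]}(X)=\pi_{*}(Y_{x}^{[d]}(X))$ is a factor of $(\Fcal^{[d]},Y_{x}^{[d]}(X))$ under the $\Fcal^{[d]}$-equivariant map $\pi_{*}$, and a factor of a minimal system is minimal.

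The main obstacle — and the reason this is not an immediate corollary of Proposition \ref{prop:minimality} — is producing an idempotent of $E(\Fcal^{[d]},Y_{x}^{[d]}(X))$ that fixes $x^{[d]}$: unlike $\Hcal^{[d]}$, the group $\Fcal^{[d]}$ does not contain the diagonal $\Del^{[d]}(G)$, so the naive candidate $u^{[d]}$ is \emph{not} in the closure of $\Fcal^{[d]}$, and one genuinely needs Lemma \ref{lem:idempotent}, whose proof builds $[u]_{F_1\cup\cdots\cup F_d}$ face-by-face using continuity of right multiplication in $E$ and $u^2=u$. Once this element is in hand, everything else is a faithful transcription of the earlier argument, and I would streamline the exposition by explicitly pointing the reader to the proof of Proposition \ref{prop:minimality} rather than rewriting the ideal-theoretic bookkeeping in full.
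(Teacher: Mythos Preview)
Your proposal is correct and is essentially identical to the paper's own proof, which simply says that the argument is ``almost verbatim the same as in the proof of Proposition \ref{prop:minimality}'' with Lemma \ref{lem:idempotent} supplying the one non-obvious ingredient that $\tilde u = u_*^{[d]}$ lies in the relevant enveloping semigroup. One tiny imprecision: since $\Fcal^{[d]}$ acts trivially on the $\vec 0$-coordinate, the $\vec 0$-projection of any element of $E(\Fcal^{[d]},Y_x^{[d]}(X))$ is the identity, not $u$; but as $ux=x$ this is harmless, and indeed the paper sidesteps the issue by working with $Y_{x*}^{[d]}(X)$ first and deducing the statement for $Y_x^{[d]}(X)$.
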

\begin{proof}
The proof of the minimality of the t.d.s $(\Fcal^{[d]},Y_{x*}^{[d]}(X))$
is almost verbatim the same as in the proof of Proposition \ref{prop:minimality},
except that here the claim that for $u$ a minimal idempotent in $E(G,X)$,
the map $\tilde{u}=u_{*}^{[d]}$ is in $E(\Fcal^{[d]},Y_{x*}^{[d]}(X))$,
is not that evident. However, as $u$ is an idempotent this fact follows
from Lemma \ref{lem:idempotent}.
\end{proof}

 In \cite[Theorem 3.1]{SY12} it was proven that for each
$x\in X,$ $(\Fcal^{[d]},Y_{x}^{[d]}(X))$ is the unique minimal subsystem
of $(\Fcal^{[d]},C_{x}^{[d]}(X))$ for $(G,X)$ minimal t.d.s with
$G$ abelian. Here we show that the same statement holds for a general
group action. We start by proving a lemma is a generalization of the ``useful lemma''
\cite[Lemma 5.1]{SY12}. The proof follows closely the original proof
with one exception: the use of the pure ceiling-mixed decomposition
(Subsection \ref{sub:HK decomposition}).
\begin{lem}
\label{lem:useful} Let $(G,X)$ be a minimal t.d.s and $d\geq1$.
If $(x^{[d-1]},w)\in C_{x}^{[d]}(X)$ for some $x\in X$ and $w\in C_{G}^{[d-1]}(X)$
and $(x^{[d-1]},w)$ is an $\Fcal^{[d]}$-minimal point, then $(x^{[d-1]},w)\in Y_{x}^{[d]}(X)$. \end{lem}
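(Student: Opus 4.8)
The plan is to prove the (equivalent) assertion $(x^{[d-1]},w)\in Y_x^{[d]}(X)$. Since $(\Fcal^{[d]},Y_x^{[d]}(X))$ is minimal by Proposition~\ref{prop:Yx is minimal} and contains $x^{[d]}$, while $(x^{[d-1]},w)$ has a minimal $\Fcal^{[d]}$-orbit closure by hypothesis, this is the same as showing that $(x^{[d-1]},w)$ and $x^{[d]}$ lie in a common $\Fcal^{[d]}$-minimal subset of $C_G^{[d]}(X)$. The argument runs parallel to the proof of the ``useful lemma'' \cite[Lemma 5.1]{SY12}, with the structure theory of minimal systems invoked there replaced throughout by the enveloping-semigroup results of Section~\ref{sec:minimal-subsystems-for} --- primarily Propositions~\ref{prop:minimality} and~\ref{prop:Yx is minimal} and Lemma~\ref{lem:induced projections} --- and with one new ingredient, the pure ceiling--mixed decomposition of Subsection~\ref{sub:HK decomposition}.

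We induct on $d$. For $d=1$ one has $Y_x^{[1]}(X)=\overline{\Fcal^{[1]}(x,x)}=\{x\}\times\overline{Gx}=\{x\}\times X$ by minimality of $(G,X)$, so the claim is trivial. Let $d\ge2$ and assume the statement for $d-1$. By the face representation $C_G^{[d]}(X)=\overline{\{g z^{[d]}\mid g\in\Fcal^{[d]},\ z\in X\}}$ (Proposition~\ref{prop:face rep for Cd}), write $(x^{[d-1]},w)=\lim_i g_i z_i^{[d]}$ with $g_i\in\Fcal^{[d]}$, $z_i\in X$; evaluating at $\vec 0$, where every element of $\Fcal^{[d]}$ equals the identity, forces $z_i\to x$. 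Apply now the pure ceiling--mixed decomposition: in the floor/ceiling identification $X^{[d]}=X^{[d-1]}\times X^{[d-1]}$ an element of $\Fcal^{[d]}$ splits as $cm$, where $m$ lies in the ``mixed'' subgroup $\langle[G]_{F_j}:j<d\rangle$, acts the same way on both factors, and is --- as an acting group --- a copy of $\Fcal^{[d-1]}$, while $c$ is a ``pure-ceiling'' element (supported on $\{\omega_d=1\}$) acting on the ceiling factor alone through $\phi_c(c)\in\Hcal^{[d-1]}$. Writing $g_i=c_im_i$ and setting $\overline{m_i}\in\Fcal^{[d-1]}$, $\gamma_i:=\phi_c(c_i)$, we obtain
\[
(x^{[d-1]},w)=\lim_i\bigl(\overline{m_i}\,z_i^{[d-1]},\ \gamma_i\,\overline{m_i}\,z_i^{[d-1]}\bigr),
\]
so the floor gives $\overline{m_i}\,z_i^{[d-1]}\to x^{[d-1]}$ and the ceiling gives $\gamma_i\,\overline{m_i}\,z_i^{[d-1]}\to w$.

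The next step descends to level $d-1$ by means of the floor morphism $(\pi_f,\phi_f)\colon(\Fcal^{[d]},\overline{\Fcal^{[d]}(x^{[d-1]},w)})\twoheadrightarrow(\Fcal^{[d-1]},Y_x^{[d-1]}(X))$ and the ceiling morphism $(\pi_c,\phi_c)$ of Lemma~\ref{lem:induced projections} (and its floor analogue), both factor maps onto the indicated minimal systems (Propositions~\ref{prop:Yx is minimal} and~\ref{prop:minimality}). One restricts to the fibre of $\pi_f$ over $x^{[d-1]}$, on which the pure-ceiling group acts, reorganizes the ceiling behaviour so that it becomes an instance of the statement at level $d-1$, and applies the inductive hypothesis to conclude $(x^{[d-1]},w)\in Y_x^{[d]}(X)$. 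The delicate point here --- where naive limits do not suffice --- is that the elements of the relevant enveloping semigroups act discontinuously, so the moving basepoint $z_i$ cannot simply be replaced by $x$; this is circumvented by passing to minimal idempotents and exploiting minimality of the orbit closures in play, just as in the proofs of Propositions~\ref{prop:minimality} and~\ref{prop:Yx is minimal}.

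The main obstacle --- and the single place where the argument departs from \cite[Lemma 5.1]{SY12} --- is the non-commutativity hidden in the ceiling data. In the abelian setting of \cite{SY12} the pure-ceiling group is carried by $\phi_c$ exactly onto the diagonal $\Del^{[d-1]}(G)$, so the ceiling dynamics is governed by a single diagonal copy of $G$; in general the image is strictly larger --- it picks up commutators and is merely a normal subgroup of $\Hcal^{[d-1]}$ containing $\Del^{[d-1]}(G)$ --- so the ceiling is governed by a subgroup of $\Hcal^{[d-1]}$ rather than by $\Del^{[d-1]}(G)$. One therefore has to verify that the minimality facts used in the descent survive, and in particular that the $\Hcal^{[d-1]}$-minimality produced by the ceiling morphism can be matched with the $\Fcal^{[d-1]}$-minimality needed to invoke the inductive hypothesis; it is exactly for this matching that the pure ceiling--mixed decomposition of Subsection~\ref{sub:HK decomposition} is used.
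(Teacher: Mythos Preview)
Your write-up correctly identifies the tools in play --- the pure ceiling--mixed decomposition, the ceiling morphism of Lemma~\ref{lem:induced projections}, and the need for enveloping-semigroup arguments --- but the actual proof is missing. The two substantive paragraphs (``One restricts to the fibre\ldots'' and ``The delicate point\ldots'') are programmatic, not mathematical: you never say \emph{how} the ceiling behaviour is ``reorganized'' into an instance of the statement at level $d-1$, nor what the inductive hypothesis would give you even if you could. Concretely, the inductive hypothesis concerns configurations of the shape $(x^{[d-2]},w')\in C_x^{[d-1]}(X)$, whereas here the ceiling $w$ lies only in $C_G^{[d-1]}(X)$ and need not have $x$ at its $\vec 0$-coordinate; no reduction from the latter to the former is indicated. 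Likewise, ``passing to minimal idempotents\ldots just as in the proofs of Propositions~\ref{prop:minimality} and~\ref{prop:Yx is minimal}'' does not transfer: those proofs show that a specific diagonal idempotent is minimal, which is a different mechanism from what is needed to correct the moving basepoint $z_i$.

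The paper's proof is in fact \emph{not} by induction on $d$; it is a direct Ellis-semigroup argument. After the decomposition gives $(\id^{[d-1]}\times h_n)(a_n,a_n)\to(x^{[d-1]},w)$ with $a_n\in C_G^{[d-1]}(X)$, one fixes a minimal left ideal $L\subset E(\Fcal^{[d]},C_x^{[d]}(X))$, lifts $a_n$ through the ceiling map to $p_n\in L$ with $\pi_c^\bullet(p_n)x^{[d-1]}=a_n$, and lets $p$ be an accumulation point, so $\pi_c^\bullet(p)x^{[d-1]}=x^{[d-1]}$. The key step --- entirely absent from your sketch --- is to choose $v\in J(L)$ with $vp=p$ and then invert $p$ inside the \emph{subgroup} $S=\{a\in vL:\pi_c^\bullet(a)x^{[d-1]}=x^{[d-1]}\}$ of the group $vL$, obtaining $q$ with $pq=v$. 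Setting $x_n=p_nqx^{[d]}\in Y_x^{[d]}(X)$ yields $(\id^{[d-1]}\times h_n)x_n\to(\pi_f^\bullet(v)x^{[d-1]},w)\in Y_x^{[d]}(X)$; finally the assumed $\Fcal^{[d]}$-minimality of $(x^{[d-1]},w)$ supplies $u\in J(L)$ fixing it, and $uv=u$ pushes the floor back to $x^{[d-1]}$. The induction on $d$ in this paper occurs in Theorem~\ref{thm:unique minimal}, not here.
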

\begin{proof}
We will show that there exists a minimal left ideal $L\subset E(\Fcal^{[d]},C_{x}^{[d]}(X))$
and an idempotent $v\in L$ such that $(\pi_{f}^{\bullet}(v)x^{[d-1]},w)\in Y_x^{[d]}$.
Assume this is true. Since, by assumption, $(x^{[d-1]},w)$ is $\Fcal^{[d]}$-minimal,
there is some minimal idempotent $u\in J(L)$ such that $u(x^{[d-1]},w)=(\pi_{f}^{\bullet}(u)x^{[d-1]},\pi_{c}^{\bullet}(u)w)=(x^{[d-1]},w)$
(Proposition \ref{prop:Ellis fundamentals}(2)). Since $u,v\in L$
are minimal idempotents in the same minimal left ideal $L$, we have $u\in Ev$ and this implies
$uv=u$ (Proposition \ref{prop:Ellis fundamentals}(3)). Thus $u(\pi_{f}^{\bullet}(v)x^{[d-1]},w)=(\pi_{f}^{\bullet}(u)\pi_{f}^{\bullet}(v)x^{[d-1]},\pi_{c}^{\bullet}(u)w)=(\pi_{f}^{\bullet}(uv)x^{[d-1]},w)=(\pi_{f}^{\bullet}(u)x^{[d-1]},w)=(x^{[d-1]},w)$
which implies $(x^{[d-1]},w)\in Y_x^{[d]}$ as desired.

To construct $L$ and $v$ notice that since $(x^{[d-1]},w)\in C_{G}^{[d]}(X)$
and $(\Hcal^{[d]},C_{G}^{[d]}(X))$ is a minimal t.d.s by Proposition
\ref{prop:minimality}, $(x^{[d-1]},w)$ is in the $\Hcal^{[d]}$-orbit
closure of $x^{[d]}$, i.e. by Proposition \ref{prop:H=00003Dpure ceiling mixed}
there are sequences $(s_{n}\times s_{n})\in\Hcal^{[d]}$ and $(\id^{[d-1]}\times h_n)\in\Fcal^{[d]}$
where $s_{n},h_n\in G^{[d-1]}$ such that:
\[
(\id^{[d-1]}\times h_n)(s_{n}\times s_{n})(x^{[d-1]},x^{[d-1]})=(s_{n}x^{[d-1]},h_ns_{n}x^{[d-1]})\to_{n}(x^{[d-1]},w).
\]
letting $a_{n}\triangleq s_{n}x^{[d-1]}=\pi_{f}(s_{n}x^{[d-1]},h_ns_{n}x^{[d-1]})\in C_{G}^{[d-1]}(X)$,
we have:
\begin{equation}
(\id^{[d-1]}\times h_n)(a_{n},a_{n})\to(x^{[d-1]},w)\label{eq:a,a limit}
\end{equation}

Fix a minimal left ideal $L$ of $E(\Fcal^{[d]},C_{x}^{[d]}(X))$.
By Proposition \ref{prop:Ellis fundamentals}(6) $(\Fcal^{[d]},L)$
is a minimal subsystem of $E(\Fcal^{[d]},C_{x}^{[d]}(X))$. Thus by
Lemma \ref{lem:induced projections} $\pi_{c}(L)\subset E(\Hcal^{[d-1]},C_{G}^{[d-1]}(X))$
is a minimal $\Hcal^{[d-1]}$-subsystem.\\ As$(\Hcal^{[d-1]},C_{G}^{[d-1]}(X))$
is minimal by Proposition \ref{prop:minimality} it follows from Proposition
\ref{prop:Ellis fundamentals}(7) that $\pi_{c}(L)x^{[d-1]}=C_{G}^{[d-1]}(X)$.
Thus there exist $p_{n}\in L$ such that $a_{n}=\pi_{c}^{\bullet}(p_{n})x^{[d-1]}$.
Let $p\in L$ be an accumulation point of $\{p_{n}\}$. As by (\ref{eq:a,a limit}),
$\pi_{c}^{\bullet}(p_{n})x^{[d-1]}=a_{n}\to_{n}x^{[d-1]}$ we must
have
\begin{equation}
\pi_{c}^{\bullet}(p)x^{[d-1]}=x^{[d-1]}.\label{q2}
\end{equation}
If $\pi_{f}^{\bullet}(p)x^{[d-1]}=\pi_{f}^{\bullet}(v)x^{[d-1]}$
for some idempotent $v\in L$ then $b_{n}\triangleq(\id^{[d-1]}\times h_n)p_{n}x^{[d]}\in Y_{x}^{[d]}(X)$
and $b_{n}=(\pi_{f}^{\bullet}(p_{n})x^{[d-1]},h_n\pi_{c}^{\bullet}(p_{n})x^{[d-1]}\to_{n}(\pi_{f}^{\bullet}(v)x^{[d-1]},w)$
as desired. However as this does not necessarily hold, the idea is
to find an element $q\in E(\Fcal^{[d]},C_{x}^{[d]}(X))$ and $v\in J(L)$
so that $v=pq$ and $\pi_{c}^{\bullet}(pq)x^{[d-1]}=x^{[d-1]}$. Defining
$x_{n}=p_{n}qx^{[d]}\in Y_{x}^{[d]}(X)$, one has $(\id^{[d-1]}\times h_n)x_{n}\to_{n}(\pi_{f}^{\bullet}(v)x^{[d-1]},w)$
as desired (see details below).

Indeed since $L$ is a minimal left ideal and $p\in L$, by Proposition
\ref{prop:Ellis fundamentals}(5) there exists a minimal idempotent
$v\in J(L)$ such that $vp=p$. Thus:
\begin{equation}
\pi_{c}^{\bullet}(v)x^{[d-1]}=\pi_{c}^{\bullet}(v)\pi_{c}^{\bullet}(p)x^{[d-1]}=\pi_{c}^{\bullet}(vp)x^{[d-1]}=\pi_{c}^{\bullet}(p)x^{[d-1]}=x^{[d-1]}\label{eq:v in F}
\end{equation}
By Proposition \ref{prop:Ellis fundamentals}(5) $vL$ is a group.
One verifies easily the following is a subgroup:
\[
S=\{a\in vL:\pi_{c}^{\bullet}(a)x^{[d-1]}=x^{[d-1]}\}
\]
By (\ref{q2}), we have that $vp=p\in S$. Let S so that $pq=v$.
Thus $\pi_{c}^{\bullet}(pq)x^{[d-1]}=x^{[d-1]}$. Denote $x_{n}=p_{n}qx^{[d]}\in Y_{x}^{[d]}(X)$.
Note $\pi_{f}(x_{n})=\pi_{f}^{\bullet}(p_{n}q)x^{[d-1]}\to_{n}\pi_{f}^{\bullet}(pq)x^{[d-1]}=\pi_{f}^{\bullet}(v)x^{[d-1]}$
and $\pi_{c}(x_{n})=\pi_{c}^{\bullet}(p_{n}q)x^{[d-1]}=a_{n}$. As
$\id^{[d-1]}\times h_n\in\Fcal^{[d]}$, $(\id^{[d-1]}\times h_n)x_{n}\in Y_{x}^{[d]}(X)$.
By (\ref{eq:a,a limit}), $(\id^{[d-1]}\times h_n)x_{n}\to_{n}(\pi_{f}^{\bullet}(v)x^{[d-1]},w)$
and thus we conclude as desired $(\pi_{f}^{\bullet}(v)x^{[d-1]},w)\in Y_{x}^{[d]}(X)$.
\end{proof}

With the above preparation we are ready to show:

\begin{thm}
\label{thm:unique minimal}Let $(G,X)$ be a minimal topological dynamical
system and $d\geq1$, then for each $x\in X,$ $(\Fcal^{[d]},Y_{x}^{[d]}(X))$
is the unique minimal subsystem of $(\Fcal^{[d]},C_{x}^{[d]}(X))$.
Hence also $(\Fcal^{[d]},Y_{x*}^{[d]}(X))$ is the unique minimal
subsystem of the t.d.s. $(\Fcal^{[d]},C_{x*}^{[d]}(X))$.\end{thm}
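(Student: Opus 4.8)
The plan is to prove the theorem by induction on $d$, using the floor/ceiling decomposition $X^{[d]}=X^{[d-1]}\times X^{[d-1]}$ and feeding Lemma~\ref{lem:useful} the required minimal point. The base case $d=1$ is a direct computation: one checks $\Hcal^{[1]}=G\times G$, so by minimality of $(G,X)$ one gets $C_G^{[1]}(X)=X\times X$ and hence $C_x^{[1]}(X)=\{x\}\times X$; moreover $\Fcal^{[1]}=\{\Id\}\times G$ acts on $C_x^{[1]}(X)$ exactly as $(G,X)$ acts on $X$, which is minimal, so $(\Fcal^{[1]},C_x^{[1]}(X))$ is itself minimal, equals $Y_x^{[1]}(X)=\{x\}\times X$, and is trivially its own unique minimal subsystem.

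For the inductive step, fix $d\geq2$ and assume the statement for $d-1$, and let $M\subseteq C_x^{[d]}(X)$ be an arbitrary $\Fcal^{[d]}$-minimal subsystem. First I would set up the floor projection in parallel to the ceiling projection of Subsection~\ref{sub:Projections}: the restriction-to-the-floor-face homomorphism $\phi_f\colon\Hcal^{[d]}\to G^{[d-1]}$ satisfies $\phi_f(\Fcal^{[d]})=\Fcal^{[d-1]}$ (same bookkeeping on the generators $[G]_{F_i}$ as in Lemma~\ref{lem:induced group projections}), and the floor map $\pi_f$ satisfies $\pi_f(C_x^{[d]}(X))=C_x^{[d-1]}(X)$ (as in Lemma~\ref{lem:induced projections range}, together with the observation that $c'\mapsto(c',c')$ embeds $C_x^{[d-1]}(X)$ into $C_x^{[d]}(X)$ since $\Hcal^{[d-1]}\times\Hcal^{[d-1]}$ sits diagonally inside $\Hcal^{[d]}$). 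Thus $(\pi_f,\phi_f)$ is a dynamical morphism $(\Fcal^{[d]},C_x^{[d]}(X))\to(\Fcal^{[d-1]},C_x^{[d-1]}(X))$, so $\pi_f(M)$ is an $\Fcal^{[d-1]}$-minimal subsystem of $C_x^{[d-1]}(X)$; by the inductive hypothesis $\pi_f(M)=Y_x^{[d-1]}(X)$, and in particular $x^{[d-1]}\in\pi_f(M)$.

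Now choose a point $c=(x^{[d-1]},w)\in M$ with floor $x^{[d-1]}$. Since $M\subseteq C_x^{[d]}(X)\subseteq C_G^{[d]}(X)$, the proof of Lemma~\ref{lem:induced projections range} gives $w=\pi_c(c)\in C_G^{[d-1]}(X)$, and $c$ is an $\Fcal^{[d]}$-minimal point because $M$ is minimal. Hence Lemma~\ref{lem:useful} applies verbatim and yields $c\in Y_x^{[d]}(X)$. As $Y_x^{[d]}(X)$ is $\Fcal^{[d]}$-minimal by Proposition~\ref{prop:Yx is minimal} and shares the point $c$ with the minimal set $M$, the two coincide, so $M=Y_x^{[d]}(X)$. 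This proves that $Y_x^{[d]}(X)$ is the unique minimal subsystem of $(\Fcal^{[d]},C_x^{[d]}(X))$.

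Finally, the statement about $C_{x*}^{[d]}(X)$ is immediate: because every $\gamma\in\Fcal^{[d]}$ fixes the $\vec{0}$-coordinate, $\pi_*$ restricts to an $\Fcal^{[d]}$-equivariant homeomorphism of $C_x^{[d]}(X)$ onto $C_{x*}^{[d]}(X)$ carrying $Y_x^{[d]}(X)$ onto $Y_{x*}^{[d]}(X)$, so minimal subsystems correspond bijectively. I expect the only nontrivial input to be Lemma~\ref{lem:useful}, which is already established; within the present argument the delicate point is purely organizational, namely arranging the floor projection so that the inductive hypothesis yields $x^{[d-1]}\in\pi_f(M)$ — once this membership is in hand, Lemma~\ref{lem:useful} and the disjointness of distinct minimal sets close the proof.
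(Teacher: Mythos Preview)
Your proof is correct and follows essentially the same approach as the paper's: induction on $d$, using the floor projection to descend to $C_x^{[d-1]}(X)$, applying the inductive hypothesis to locate a point of $M$ with floor $x^{[d-1]}$, and then invoking Lemma~\ref{lem:useful} to place that point in $Y_x^{[d]}(X)$. You spell out more explicitly than the paper does the base case, the fact that $\phi_f(\Fcal^{[d]})=\Fcal^{[d-1]}$ (rather than $\Hcal^{[d-1]}$ as for the ceiling), and the final step using Proposition~\ref{prop:Yx is minimal} to conclude $M=Y_x^{[d]}(X)$ from a shared point, but the architecture is identical.
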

\begin{proof}
For $d=1$ the claim is obvious as $Y_{x}^{[1]}(X)=C_{x}^{[1]}(X)$.
We assume by induction that the assertion holds for every $1\le j\le d-1$
and given $x\in X$, consider a minimal subsystem $Y$ of the t.d.s
$(\Fcal^{[d]},C_{x}^{[d]}(X))$. Let $\pi_{f}$ be the floor projection
(see Subsection \ref{sub:Projections}). We observe that $Y_{1}=\pi_{f}(Y)$
is a minimal subsystem of the t.d.s $(\Fcal^{[d-1]},C_{x}^{[d-1]}(X))$
and therefore, by the induction hypothesis $Y_{1}=Y_{x}^{[d-1]}(X)=\ol{\Fcal^{[d-1]}x^{[d-1]}}$.
But then for some $w\in C_{G}^{[d-1]}(X)$ we have $(x^{[d-1]},w)\in Y$.
Therefore the claim is reduced to the ``useful lemma\textquotedbl{}
\cite[Lemma 5.1]{SY12} which we reproduce as Lemma \ref{lem:useful}
in the sequel.\end{proof}
\begin{cor}\label{cor:approaching constant}
 Let $(G,X)$ be a minimal t.d.s
and $d\geq1$. If $c\in C_{x}^{[d]}(X)$ then $x^{[d]}\in\overline{\Fcal^{[d]}c}$.\end{cor}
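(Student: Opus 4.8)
The corollary is an essentially immediate consequence of Theorem \ref{thm:unique minimal}. The plan is as follows. First I would check that the orbit closure $\overline{\Fcal^{[d]}c}$ is contained in $C_{x}^{[d]}(X)$. Since $c\in C_{G}^{[d]}(X)$ and, by its very definition as an orbit closure (Equation (\ref{eq:def of Cn H})), $C_{G}^{[d]}(X)$ is a closed $\Hcal^{[d]}$-invariant subset of $X^{[d]}$, and $\Fcal^{[d]}\subset\Hcal^{[d]}$, we get $\overline{\Fcal^{[d]}c}\subset C_{G}^{[d]}(X)$. Moreover every $\g\in\Fcal^{[d]}$ satisfies $\g(\vec{0})=\Id$, so $(\g c)(\vec{0})=c(\vec{0})=x$; as the set $\{x\}\times X_{*}^{[d]}$ is closed, it follows that $\overline{\Fcal^{[d]}c}\subset\{x\}\times X_{*}^{[d]}$. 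Combining the two inclusions, $\overline{\Fcal^{[d]}c}\subset C_{G}^{[d]}(X)\cap(\{x\}\times X_{*}^{[d]})=C_{x}^{[d]}(X)$.

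Next, $(\Fcal^{[d]},\overline{\Fcal^{[d]}c})$ is a topological dynamical system (a compact Hausdorff space carrying a continuous $\Fcal^{[d]}$-action), hence it contains a minimal subsystem $M$, and $M\subset\overline{\Fcal^{[d]}c}\subset C_{x}^{[d]}(X)$. By Theorem \ref{thm:unique minimal}, $(\Fcal^{[d]},Y_{x}^{[d]}(X))$ is the \emph{unique} minimal subsystem of $(\Fcal^{[d]},C_{x}^{[d]}(X))$, so necessarily $M=Y_{x}^{[d]}(X)=\ol{\Fcal^{[d]}(x^{[d]})}$. Since $\Id\in\Fcal^{[d]}$ we have $x^{[d]}\in Y_{x}^{[d]}(X)$, and therefore $x^{[d]}\in M\subset\overline{\Fcal^{[d]}c}$, which is exactly the claim.

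There is no genuine difficulty here: the only point that requires a line of verification is the inclusion $\overline{\Fcal^{[d]}c}\subset C_{x}^{[d]}(X)$, which is what makes Theorem \ref{thm:unique minimal} applicable; everything else is a direct appeal to the existence of minimal subsystems together with the uniqueness statement just established.
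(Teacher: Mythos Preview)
Your proof is correct and follows essentially the same approach as the paper: both arguments rest entirely on Theorem \ref{thm:unique minimal}, using that any minimal subsystem of $\overline{\Fcal^{[d]}c}\subset C_{x}^{[d]}(X)$ must equal $Y_{x}^{[d]}(X)\ni x^{[d]}$. The paper phrases this as a one-line proof by contradiction, whereas you give the direct argument and spell out the inclusion $\overline{\Fcal^{[d]}c}\subset C_{x}^{[d]}(X)$; the content is the same.
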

\begin{proof}
Assume not, then there is more than one $\Fcal^{[d]}$-minimal
subsystem in $C_{x}^{[d]}(X)$ contradicting Theorem \ref{thm:unique minimal}. \end{proof}
\begin{cor}
\label{cor:xyy is minimal} Let $(G,X)$ be a minimal t.d.s and $d\geq1$.
Assume $\urcorner^{[d]}(x,y)\in C_{x}^{[d]}(X)$ then $\urcorner^{[d]}(x,y)\in Y_x^{[d]}(X)$.
In particular $\urcorner^{[d]}(x,y)$ is a $\Fcal^{[d]}$-minimal point.\end{cor}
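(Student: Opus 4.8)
The plan is to prove directly that $\urcorner^{[d]}(x,y)$ is an $\Fcal^{[d]}$-minimal point of the t.d.s $(\Fcal^{[d]},C_{x}^{[d]}(X))$. Note first that since $\Fcal^{[d]}\subset\Hcal^{[d]}$ fixes the $\vec{0}$-coordinate, it acts on $C_{x}^{[d]}(X)$ and the $\Fcal^{[d]}$-orbit closure of $\urcorner^{[d]}(x,y)$ stays inside $C_{x}^{[d]}(X)$. Granted minimality of the point, that orbit closure is a minimal subsystem of $(\Fcal^{[d]},C_{x}^{[d]}(X))$, hence equals $Y_{x}^{[d]}(X)$ by Theorem \ref{thm:unique minimal}; in particular $\urcorner^{[d]}(x,y)\in Y_{x}^{[d]}(X)$, and since $(\Fcal^{[d]},Y_{x}^{[d]}(X))$ is minimal (Proposition \ref{prop:Yx is minimal}) the final assertion of the statement follows as well.

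To produce a minimal idempotent of $E\triangleq E(\Fcal^{[d]},C_{x}^{[d]}(X))$ fixing $\urcorner^{[d]}(x,y)$, I would begin with a minimal idempotent $u\in E(G,X)$ satisfying $uy=y$, available by Proposition \ref{prop:Ellis fundamentals}(2) since $y$ is a minimal point. Let $F_{1},\dots,F_{d}$ be the upper hyperfaces of $\{0,1\}^{d}$, so $F_{1}\cup\cdots\cup F_{d}=\{0,1\}^{d}\setminus\{\vec{0}\}$. Running the argument of Lemma \ref{lem:idempotent} essentially verbatim --- take a net $t_{\alpha}\to u$ in $E(G,X)$ and pass successively to limits of $[t_{\alpha}]_{F_{1}}$, $[t_{\alpha}]_{F_{2}}[u]_{F_{1}}$, $\ldots$, using continuity of right multiplication in $E$ and $u^{2}=u$ --- one obtains that $\tilde{u}\triangleq[u]_{F_{1}\cup\cdots\cup F_{d}}$, i.e.\ the element of $E$ acting coordinatewise as $u$ on every coordinate $\ep\neq\vec{0}$ and as the identity on the $\vec{0}$-coordinate, lies in $E$ and is an idempotent. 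Since $\urcorner^{[d]}(x,y)$ takes the value $x$ at $\vec{0}$ and the value $y$ at every $\ep\neq\vec{0}$, and $uy=y$, we get $\tilde{u}\,\urcorner^{[d]}(x,y)=\urcorner^{[d]}(x,y)$.

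It then remains to verify that $\tilde{u}$ is a \emph{minimal} idempotent, and here I would rerun the projection argument from the proof of Proposition \ref{prop:minimality}. As $\Fcal^{[d]}$ acts coordinatewise, an element of $E$ is determined by its projections $\pi_{\ep}^{\bullet}$ with $\ep\neq\vec{0}$ (the $\vec{0}$-projection being trivial), and these land in $E(\Fcal^{[d]},X_{\ep})=E(G,X)$ with $\pi_{\ep}^{\bullet}(\tilde{u})=u$. Choosing a minimal left ideal $L\subset E\tilde{u}$ and an idempotent $v\in L$, one has $v\tilde{u}=v$ by Proposition \ref{prop:Ellis fundamentals}(3); hence $v_{\ep}\triangleq\pi_{\ep}^{\bullet}(v)$ satisfies $v_{\ep}u=v_{\ep}$ and $v_{\ep}^{2}=v_{\ep}$, so $v_{\ep}$ is an idempotent of the minimal left ideal $E(G,X)u$, which gives $uv_{\ep}=u$ (Proposition \ref{prop:Ellis fundamentals}(3) again). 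Since this holds for all $\ep\neq\vec{0}$, $\tilde{u}v=\tilde{u}$, so $\tilde{u}\in Ev$, a minimal left ideal, i.e.\ $\tilde{u}$ is a minimal idempotent. By Proposition \ref{prop:Ellis fundamentals}(2) the point $\urcorner^{[d]}(x,y)=\tilde{u}\,\urcorner^{[d]}(x,y)$ is $\Fcal^{[d]}$-minimal, completing the argument. I expect the only point needing care to be this last verification, which is a routine adaptation of Proposition \ref{prop:minimality}; the sole difference --- that $\tilde{u}$ is the identity rather than $u$ on the $\vec{0}$-coordinate --- is harmless because $\Fcal^{[d]}$ acts trivially there and that coordinate is constantly $x$ on $C_{x}^{[d]}(X)$.
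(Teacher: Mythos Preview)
Your argument is correct, but it takes a longer route than the paper's. The paper's proof is a one-line observation: since $\Fcal^{[d]}$ fixes the $\vec{0}$-coordinate, the map $w\mapsto(x,w_{*})$ gives an $\Fcal^{[d]}$-isomorphism from $\overline{\Fcal^{[d]}y^{[d]}}=Y_{y}^{[d]}(X)$ onto $\overline{\Fcal^{[d]}\urcorner^{[d]}(x,y)}$; the former is minimal by Proposition~\ref{prop:Yx is minimal}, so the latter is as well, and Theorem~\ref{thm:unique minimal} forces it to equal $Y_{x}^{[d]}(X)$. In other words, the paper simply transports the already-established minimality of $Y_{y}^{[d]}(X)$ across an obvious isomorphism.

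What you do instead is rebuild the enveloping-semigroup argument of Proposition~\ref{prop:Yx is minimal} from scratch, tailored to the specific point $\urcorner^{[d]}(x,y)$: you construct $\tilde{u}$ via Lemma~\ref{lem:idempotent} and then rerun the minimality-of-idempotent verification from Proposition~\ref{prop:minimality}. This is valid (the adaptation of Lemma~\ref{lem:idempotent} to $C_{x}^{[d]}(X)$ and the handling of the trivial $\vec{0}$-coordinate both go through), but it duplicates work already packaged in Proposition~\ref{prop:Yx is minimal}. The paper's approach buys brevity by recognizing that $\urcorner^{[d]}(x,y)$ and $y^{[d]}$ have identical $\Fcal^{[d]}$-dynamics; your approach is self-contained but does not exploit this.
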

\begin{proof}
Note $(\Fcal^{[d]},\overline{\Fcal^{[d]}y^{[d]}})\to(\Fcal^{[d]},\overline{\Fcal^{[d]}\urcorner^{[d]}(x,y)})$
is an $\Fcal^{[d]}$-isomorphism.
Thus $(\Fcal^{[d]},\overline{\Fcal^{[d]}\urcorner^{[d]}(x,y)})\subset C_{x}^{[d]}(X)$
is $\Fcal^{[d]}$-minimal. By Theorem \ref{thm:unique minimal},
\allowbreak
$\overline{\Fcal^{[d]}\urcorner^{[d]}(x,y)}~=~Y_x^{[d]}(X)$
and the result follows.\end{proof}
\begin{cor}
\label{cor:xxyx is minimal} Let $(G,X)$ be a minimal t.d.s and $d\geq2$.
Assume $\urcorner^{[d]}(x,y)\in C_{x}^{[d]}(X)$ then $(x^{[d-1]},\urcorner^{[d-1]}(y,x))\in Y_x^{[d]}(X)$.\end{cor}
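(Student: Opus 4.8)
The plan is to derive the statement directly from Corollary \ref{cor:xyy is minimal}, applied in dimensions $d$ and $d-1$, with no further induction; the argument is essentially as short as that of Corollary \ref{cor:xyy is minimal}. First I would record two elementary facts about the floor/ceiling identification $X^{[d]}=X^{[d-1]}\times X^{[d-1]}$ of Subsection \ref{sub:Discrete-cubes}: (i) under it $\urcorner^{[d]}(x,y)=(\urcorner^{[d-1]}(x,y),y^{[d-1]})$, since the only floor vertex equal to $\vec{0}$ is $\vec{0}$ itself whereas every ceiling vertex differs from $\vec{0}$; and (ii) inspecting the generators $[G]_{F_i}$ of $\Fcal^{[d]}$, with $F_i=\{\omega\in\{0,1\}^{d}:\omega_i=1\}$, one sees (under the analogous identification $G^{[d]}=G^{[d-1]}\times G^{[d-1]}$) that $[g]_{F_i}=[g]_{F_i'}\times[g]_{F_i'}$ for $i<d$ with $F_i'$ the corresponding upper hyperface of $\{0,1\}^{d-1}$, and $[g]_{F_d}=\id^{[d-1]}\times g^{[d-1]}$; in particular $\delta\times\delta\in\Fcal^{[d]}$ for every $\delta\in\Fcal^{[d-1]}$.

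Next I would carry out a preliminary reduction showing that $\urcorner^{[d-1]}(y,x)\in C_y^{[d-1]}(X)$. Since the $\vec{0}$-coordinate of $\urcorner^{[d]}(x,y)$ is $x$, the hypothesis gives $\urcorner^{[d]}(x,y)\in C_G^{[d]}(X)$, hence $(x,y)\in\NRP^{[d-1]}(X)$ by Proposition \ref{prop:alt def of nrp}, hence $\llcorner^{[d]}(x,y)\in C_G^{[d]}(X)$ by Definition \ref{def:general RPd}. Applying the ceiling projection $\pi_c$, which by (the proof of) Lemma \ref{lem:induced projections range} maps $C_G^{[d]}(X)$ into $C_G^{[d-1]}(X)$, and observing that $\pi_c(\llcorner^{[d]}(x,y))=\llcorner^{[d-1]}(x,y)$, I obtain $\llcorner^{[d-1]}(x,y)\in C_G^{[d-1]}(X)$. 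Finally, the coordinate reversal $\omega\mapsto\vec{1}-\omega$ on $\{0,1\}^{d-1}$ preserves $\Hcal^{[d-1]}$, hence $C_G^{[d-1]}(X)$, and carries $\llcorner^{[d-1]}(x,y)$ to $\urcorner^{[d-1]}(y,x)$, whose $\vec{0}$-coordinate is $y$; so $\urcorner^{[d-1]}(y,x)\in C_y^{[d-1]}(X)$. (This is the one place where $d\geq 2$ enters.)

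Then, by Corollary \ref{cor:xyy is minimal} in dimension $d-1$ we have $\urcorner^{[d-1]}(y,x)\in Y_y^{[d-1]}(X)=\ol{\Fcal^{[d-1]}y^{[d-1]}}$, so there is a net $\delta_\beta\in\Fcal^{[d-1]}$ with $\delta_\beta\,y^{[d-1]}\to\urcorner^{[d-1]}(y,x)$; and by Corollary \ref{cor:xyy is minimal} in dimension $d$ the hypothesis gives $\urcorner^{[d]}(x,y)\in Y_x^{[d]}(X)$. Using fact (ii), $\delta_\beta\times\delta_\beta\in\Fcal^{[d]}$, and by fact (i)
\[
(\delta_\beta\times\delta_\beta)\,\urcorner^{[d]}(x,y)=\big(\delta_\beta\,\urcorner^{[d-1]}(x,y),\ \delta_\beta\,y^{[d-1]}\big).
\]
The ceiling converges to $\urcorner^{[d-1]}(y,x)$ by construction; and since $\urcorner^{[d-1]}(x,y)$ agrees with $y^{[d-1]}$ at every vertex $\omega\neq\vec{0}$, while every $\delta_\beta\in\Fcal^{[d-1]}$ fixes the $\vec{0}$-coordinate, the floor $\delta_\beta\,\urcorner^{[d-1]}(x,y)$ converges to $x^{[d-1]}$. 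Hence $(\delta_\beta\times\delta_\beta)\,\urcorner^{[d]}(x,y)\to(x^{[d-1]},\urcorner^{[d-1]}(y,x))$, and since $Y_x^{[d]}(X)$ is a closed $\Fcal^{[d]}$-invariant set containing $\urcorner^{[d]}(x,y)$, the limit lies in $Y_x^{[d]}(X)$, which is the assertion.

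The delicate point I expect is the bookkeeping in the preliminary reduction — passing correctly between the upper and lower corners and between the orders $d-1$ and $d$ — together with the small but decisive observation that a single net $\delta_\beta$ drives the ceiling of $\urcorner^{[d]}(x,y)$ to $\urcorner^{[d-1]}(y,x)$ and at the same time collapses its floor to the constant configuration $x^{[d-1]}$; this is exactly what renders an induction on $d$ unnecessary.
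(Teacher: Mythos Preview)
Your proof is correct and its core mechanism is exactly the paper's: write $\urcorner^{[d]}(x,y)=(\urcorner^{[d-1]}(x,y),y^{[d-1]})$, act by $\delta\times\delta\in\Fcal^{[d]}$ for $\delta\in\Fcal^{[d-1]}$, and exploit that the floor and ceiling agree off $\vec{0}$ so a single net controls both; then invoke Corollary~\ref{cor:xyy is minimal} to place the limit in $Y_x^{[d]}(X)$.

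The difference lies only in how you produce the net $\delta_\beta$. The paper proceeds from the \emph{floor} side: since $\urcorner^{[d-1]}(x,y)=\pi_f(\urcorner^{[d]}(x,y))\in C_x^{[d-1]}(X)$ immediately from the hypothesis, Corollary~\ref{cor:approaching constant} yields $f_k\in\Fcal^{[d-1]}$ with $f_k\,\urcorner^{[d-1]}(x,y)\to x^{[d-1]}$, whence automatically $f_k\,y^{[d-1]}\to\urcorner^{[d-1]}(y,x)$. You proceed from the \emph{ceiling} side, which forces the preliminary reduction establishing $\urcorner^{[d-1]}(y,x)\in C_y^{[d-1]}(X)$ via Proposition~\ref{prop:alt def of nrp} and hence via the main Theorem~\ref{thm:main}. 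That is logically fine (Corollary~\ref{cor:xxyx is minimal} is not used in the proof of Theorem~\ref{thm:main}), but it is a genuine detour: the paper's route needs neither the symmetry of $\NRP^{[d-1]}$ nor the passage $\urcorner^{[d]}\to\llcorner^{[d]}\to\llcorner^{[d-1]}\to\urcorner^{[d-1]}$, and keeps Corollary~\ref{cor:xxyx is minimal} independent of Section~\ref{sec:RPd is an equivalence relation}.
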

\begin{proof}
By Corollary \ref{cor:approaching constant} there is a sequence $f_k\in \Fcal^{[d-1]}$ such
that $f_{k}\urcorner^{[d-1]}(x,y)\to x^{[d-1]}$. Conclude $$(f_{k}\times f_{k})\urcorner^{[d]}(x,y)=(f_{k}\times f_{k})(\urcorner^{[d-1]}(x,y),y^{[d-1]})\to(x^{[d-1]},\urcorner^{[d-1]}(y,x)).$$
By Corollary \ref{cor:xyy is minimal}, $\urcorner^{[d]}(x,y)\in Y_x^{[d]}(X)$.
As $f_{k}\times f_{k}\in\Fcal^{[d]}$ (see Subsection \ref{sub:doubling})
the result follows.
\end{proof}
In \cite[Proposition 4.2]{HKM10} it was proven that $Y_{x}^{[d]}(X)=C_{x}^{[d]}(X)$
for $(\mathbb{{Z}},X)$ minimal and distal t.d.s for each $x\in X$.
Here we show that the same statement holds for a general group action.
\begin{thm}
Let $(G,X)$ be a minimal distal topological dynamical system and
$d\geq1$, then for each $x\in X$, $Y_{x}^{[d]}(X)=C_{x}^{[d]}(X)$
and hence $Y_{x*}^{[d]}(X)=C_{x*}^{[d]}(X)$.\end{thm}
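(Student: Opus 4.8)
The plan is to proceed by induction on $d$, exploiting the decomposition $X^{[d]} = X^{[d-1]}\times X^{[d-1]}$ into floor and ceiling, together with the fact that in a distal system the Ellis semigroup consists of bijections (so every point is minimal and every $u\in E$ acts invertibly, hence $E$ is a group when the action is minimal). The base case $d=1$ is trivial since $Y_x^{[1]}(X) = C_x^{[1]}(X)$ by definition. For the inductive step, assume $Y_x^{[j]}(X) = C_x^{[j]}(X)$ for all $1\le j\le d-1$ and all $x$, and fix $x\in X$ and an arbitrary point $c\in C_x^{[d]}(X)$; the goal is to show $c\in Y_x^{[d]}(X) = \overline{\Fcal^{[d]}x^{[d]}}$.

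First I would use distality to observe that $(\Fcal^{[d]}, C_x^{[d]}(X))$ has a unique minimal subsystem which, by Theorem \ref{thm:unique minimal}, must be $Y_x^{[d]}(X)$; since in a distal system every point is almost periodic (minimal), the whole system $C_x^{[d]}(X)$ coincides with its unique minimal subsystem, giving the result directly. More concretely, to make the induction visible: write $c = (c_f, c_c)$ with $c_f, c_c \in X^{[d-1]}$. Since $\pi_f(c)\in C_x^{[d-1]}(X)$, by the induction hypothesis $c_f \in Y_x^{[d-1]}(X)$, so there is a net $f_\alpha \in \Fcal^{[d-1]}$ with $f_\alpha x^{[d-1]} \to c_f$; passing to a subnet, $f_\alpha \to p$ in $E(\Fcal^{[d-1]}, C_G^{[d-1]}(X))$ with $px^{[d-1]} = c_f$, and by distality $p$ is invertible. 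Applying $(p^{-1}\times p^{-1})$ — or rather approximating it by diagonal elements $(g_\beta \times g_\beta)\in \Del^{[d-1]}(\Fcal^{[d-1]})\subset \Fcal^{[d]}$ via minimality of the relevant Ellis semigroup — one reduces to the case $c_f = x^{[d-1]}$, i.e. $c = (x^{[d-1]}, w)$ for some $w\in C_G^{[d-1]}(X)$.

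At that point I would invoke distality once more: the point $(x^{[d-1]}, w)$ is automatically $\Fcal^{[d]}$-minimal (every point in a distal system is), so Lemma \ref{lem:useful} applies verbatim and yields $(x^{[d-1]}, w)\in Y_x^{[d]}(X)$. Combining this with the reduction of the previous paragraph — the transformations used to send $c_f$ to $x^{[d-1]}$ all lie in $\Fcal^{[d]}$ and are invertible by distality — we conclude $c\in Y_x^{[d]}(X)$. Since $c$ was arbitrary, $C_x^{[d]}(X) = Y_x^{[d]}(X)$. The assertion for the $*$-versions follows by applying $\pi_*$, which maps $Y_x^{[d]}(X)$ onto $Y_{x*}^{[d]}(X)$ and $C_x^{[d]}(X)$ onto $C_{x*}^{[d]}(X)$.

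The main obstacle is the reduction step in which one wants to "invert" the floor coordinate: the natural inverse $p^{-1}$ need not lie in $\Fcal^{[d]}$ acting diagonally, so one must be careful to realize the needed correction by a genuine net from $\Fcal^{[d]}$ (using that $\Del^{[d-1]}(\Fcal^{[d-1]})\subset\Fcal^{[d]}$ and the minimality of $(\Fcal^{[d-1]}, Y_x^{[d-1]}(X))$ from Proposition \ref{prop:Yx is minimal}) rather than an abstract semigroup element. Equivalently — and perhaps more cleanly — one avoids the explicit inversion altogether by simply noting that distality forces $C_x^{[d]}(X)$ to be a minimal $\Fcal^{[d]}$-system (being a union of minimal subsystems, of which Theorem \ref{thm:unique minimal} says there is only one), which immediately gives $C_x^{[d]}(X) = Y_x^{[d]}(X)$; the inductive bookkeeping is then only needed to verify that $C_x^{[d]}(X)$ is indeed covered by minimal subsystems, which follows since $(\Hcal^{[d]}, C_G^{[d]}(X))$ is minimal (Proposition \ref{prop:minimality}) and, in the distal case, every $\Hcal^{[d]}$-orbit is already $\Fcal^{[d]}$-minimal.
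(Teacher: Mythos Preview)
Your ``clean'' argument --- distality of $(G^{[d]},X^{[d]})$ makes $(\Fcal^{[d]},C_x^{[d]}(X))$ distal, hence semisimple (every point $\Fcal^{[d]}$-minimal), and Theorem~\ref{thm:unique minimal} then forces $C_x^{[d]}(X)=Y_x^{[d]}(X)$ --- is exactly the paper's proof, and it is complete on its own. The inductive reduction via floor/ceiling, the inversion of $p$ in the Ellis group, and the appeal to Lemma~\ref{lem:useful} are all correct but entirely superfluous here: once you know the $\Fcal^{[d]}$-action on $C_x^{[d]}(X)$ is distal, semisimplicity plus uniqueness of the minimal subsystem finishes the argument in one line, with no induction needed (and no need to route through $\Hcal^{[d]}$-orbits in your final sentence either --- distality of the $\Fcal^{[d]}$-action is what gives minimality of every point directly).
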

\begin{proof}
By \cite[Theorem 5.6]{A} $(G^{[d]},X^{[d]})$ is a distal t.d.s.
This immediately implies that $(\Fcal^{[d]},C_{x}^{[d]}(X))$ is a
distal t.d.s. By \cite[Corollary 5.4(iii)]{A}, a distal system is
\textit{semisimple}, i.e decomposes into a disjoint union of minimal
subsystems. By Theorem \ref{thm:unique minimal}, $(\Fcal^{[d]},C_{x}^{[d]}(X))$
has a unique minimal subsystem $(\Fcal^{[d]},Y_{x}^{[d]}(X))$. We
thus conclude $Y_{x}^{[d]}(X)=C_{x}^{[d]}(X)$.
\end{proof}

\begin{rem}
There are non-distal minimal t.d.s for which $Y_{x*}^{[d]}(X)\neq C_{x*}^{[d]}(X)$. See \cite[Example 3.6]{tu2013dynamical}.
\end{rem}

Let $Z$ be a compact metric space and let $2^{Z}$ denote the \textit{hyperspace}
consisting of the closed non-empty subsets of $Z$ equipped with the
(compact metric) Vietoris topology (\cite[p. 124]{akin2010general}).
A function $X\rightarrow2^{Z}$ is called \textit{lower-semi-continuous}
at $x\in X$ if for every open set $O\subset Z$ such that $f(x)\cap O\neq\emptyset$, we have
that $\{y\in X|\:f(y)\cap O\neq\emptyset\}$ is a neighborhood
of $x$. A function $X\rightarrow2^{Z}$ is called \textit{upper-semi-continuous}
at $x\in X$ if for every open set $O\subset Z$ such that $f(x)\subset O$, we have that $\{y\in X|\:f(y)\subset O\}$ is a neighborhood of $x$
(\cite[Proposition 7.11]{akin2010general}). A function $X\rightarrow2^{Z}$
is continuous at $x\in X$  with respect to the Vietoris topology iff it is both upper
and lower semi-continuous at $x\in X$ (\cite[Lemma 7.5]{akin2010general}).

The following theorem is new even for $G=\mathbb{{Z}}$.
\begin{thm}\label{thm:genericity}
Let $(G,X)$ be a minimal topological dynamical system where $X$ is metrizable, then for a dense $G_{\del}$ subset
$X_{0}\subset X$ one has $Y_{x}^{[d]}(X)=C_{x}^{[d]}(X)$.\end{thm}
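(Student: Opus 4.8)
The plan is to exploit the fact that $x \mapsto C_x^{[d]}(X)$ and $x \mapsto Y_x^{[d]}(X)$ are maps from $X$ into the hyperspace $2^{X_*^{[d]}}$ (after projecting away the fixed $\vec 0$-coordinate, so that both land in $2^{X_*^{[d]}}$; the values are nonempty closed sets) and to show each has a dense $G_\delta$ of continuity points, with the two continuity sets forcing equality on their (dense $G_\delta$) intersection. First I would record that $x\mapsto C_{x*}^{[d]}(X)$ is continuous: indeed $C_G^{[d]}(X)$ is closed (hence compact) and the fiber map of the projection $\pi_{\vec 0}\colon C_G^{[d]}(X)\to X$ onto the $\vec 0$-coordinate is lower-semi-continuous everywhere (fibers of a continuous surjection between compact metric spaces are l.s.c.) and upper-semi-continuous everywhere (a standard compactness argument), so it is Vietoris-continuous; composing with $\pi_*$ keeps continuity. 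So the only real issue is the map $F\colon x\mapsto Y_{x*}^{[d]}(X)=\overline{\Fcal^{[d]}x_*^{[d]}}$.

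Next I would argue that $F$ is lower-semi-continuous on all of $X$. This is a soft orbit-closure fact: if $z_* \in Y_{x*}^{[d]}(X)$ then $z_* = \lim_\alpha \gamma_\alpha x_*^{[d]}$ for $\gamma_\alpha \in \Fcal^{[d]}$, and since the action of each $\gamma_\alpha$ on $X_*^{[d]}$ is jointly continuous, for $x'$ near $x$ the point $\gamma_\alpha (x')_*^{[d]}$ is near $z_*$; hence if $O$ is open and meets $Y_{x*}^{[d]}(X)$ it meets $Y_{x'*}^{[d]}(X)$ for $x'$ in a neighborhood of $x$. (One should be a little careful about the net/sequence bookkeeping, but this is routine; alternatively, l.s.c.\ of orbit-closure maps for group actions on compact metric spaces is standard.) A l.s.c.\ map into a compact metric hyperspace has a dense $G_\delta$ set of continuity points — this is the classical theorem that a semicontinuous map between a Baire space and a metric space is continuous on a dense $G_\delta$ (apply it to $F$ viewed as a map into the compact metric space $2^{X_*^{[d]}}$). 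Call this dense $G_\delta$ set $X_1$.

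Now set $X_0 = X_1$ and fix $x \in X_0$. I want $Y_{x*}^{[d]}(X) = C_{x*}^{[d]}(X)$ (from which $Y_x^{[d]}(X)=C_x^{[d]}(X)$ follows by re-attaching the constant $\vec 0$-coordinate). One inclusion is automatic. For the reverse: let $c_* \in C_{x*}^{[d]}(X)$. By Proposition \ref{prop:face rep for Cd} we may write $c_* = \lim_n \gamma_n (x_n)_*^{[d]}$ with $\gamma_n \in \Fcal^{[d]}$ and $x_n \to x$ (the $\vec 0$-coordinate of $\gamma_n(x_n)^{[d]}$ is $x_n$, and it must converge to the $\vec 0$-coordinate of $c$, which is $x$ since $c\in C_x^{[d]}(X)$ — here is where we use continuity of $\pi_{\vec 0}$). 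Thus $\gamma_n (x_n)_*^{[d]} \in Y_{x_n*}^{[d]}(X)$. Since $F$ is continuous at $x$ and $x_n \to x$, we have $Y_{x_n*}^{[d]}(X) \to Y_{x*}^{[d]}(X)$ in the Vietoris topology; by upper-semi-continuity at $x$ (part of continuity), any limit point of a sequence $w_n$ with $w_n \in Y_{x_n*}^{[d]}(X)$ lies in $Y_{x*}^{[d]}(X)$. Applying this to $w_n = \gamma_n(x_n)_*^{[d]}$ gives $c_* \in Y_{x*}^{[d]}(X)$, as desired.

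The main obstacle I anticipate is not any single step but getting the semicontinuity/hyperspace formalism exactly right: verifying carefully that $x\mapsto C_{x*}^{[d]}(X)$ is genuinely u.s.c.\ (this uses compactness of $C_G^{[d]}(X)$) and that $F$ is genuinely l.s.c.\ (the net argument above), and then invoking the correct form of the Baire-category theorem for semicontinuous hyperspace-valued maps. Once those are in place, the passage from ``$x_n\to x$, $w_n\in Y_{x_n*}^{[d]}(X)$'' to ``accumulation points of $w_n$ lie in $Y_{x*}^{[d]}(X)$'' is exactly the statement that the Vietoris limit set at a continuity point captures all such accumulation points, so the final deduction is short. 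Metrizability of $X$ is used exactly here, to have a compact metric hyperspace and a genuine $G_\delta$; this matches the hypothesis and the footnote in the paper that this is one of the two places metrizability is essential.
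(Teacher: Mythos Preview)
Your proposal is correct and follows essentially the same route as the paper: show that $x\mapsto Y_x^{[d]}(X)$ is lower-semi-continuous, invoke the dense $G_\delta$ set of continuity points for a semicontinuous hyperspace-valued map, and then at a continuity point use upper-semi-continuity together with the density of $\Fcal^{[d]}\Del^{[d]}(X)$ in $C_G^{[d]}(X)$ (Proposition~\ref{prop:face rep for Cd}) to force $C_x^{[d]}(X)\subset Y_x^{[d]}(X)$. The paper phrases the last step as a short contradiction argument, while you do it directly by tracking a sequence $\gamma_n(x_n)^{[d]}$ with $x_n\to x$; these are the same argument.

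One inessential correction: your side claim that the fiber map $x\mapsto C_{x*}^{[d]}(X)$ is \emph{lower}-semi-continuous (``fibers of a continuous surjection between compact metric spaces are l.s.c.'') is false in general---fibers of a continuous map between compact spaces are always u.s.c.\ but need not be l.s.c. Fortunately you never use this: your actual argument only needs l.s.c.\ of $x\mapsto Y_{x*}^{[d]}(X)$ (which you justify separately and correctly via the orbit-closure argument) and continuity of the coordinate projection $\pi_{\vec 0}$, so the proof stands.
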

\begin{proof}
Consider $\Phi:X\to2^{X^{[d]}}$ given by $x\mapsto Y_{x}^{[d]}(X)$.
It is easy to check that this map is lower-semi-continuous. By \cite[Theorem 7.19]{akin2010general}
the set of continuity points of $\Phi$ is a dense $G_{\del}$ subset
$X_{0}\subset X$. Since by Proposition \ref{prop:face rep for Cd}
the set $\Fcal^{[d]}\Del^{[d]}(X)$ is dense in $C_{G}^{[d]}(X)$,
it follows that at each point of $X_{0}$ we must have $Y_{x}^{[d]}(X)=C_{x}^{[d]}(X)$.
Indeed let $x_{0}\in X_{0}$ and assume $Y_{x_{0}}^{[d]}(X)\neq C_{x_{0}}^{[d]}(X)$.
Let $U$ be an open set in $C_{G}^{[d]}(X)$ so that $Y_{x_{0}}^{[d]}(X)\subset C_{x_{0}}^{[d]}(X)\cap U\neq C_{x_{0}}^{[d]}(X)$.
As $\Phi$ is
upper-semi-continuous at $x_{0}$
the set $\{x\in X|\:Y_{x}^{[d]}(X) \subset U\}$
is a neighborhood of $x_{0}$ and it follows $\Fcal^{[d]}\Del^{[d]}$
is not dense in $C_{x_{0}}^{[d]}(X)$.
\end{proof}

%\subsection{Auxiliary lemmas}

\section{$\NRP^{[d]}(X)$ is an equivalence relation for minimal actions\label{sec:RPd is an equivalence relation}}
In this section we prove the main theorem of the article, Theorem
\ref{thm:main}:
\begin{proof}
Clearly $\NRP^{[d]}(X)$ $(d\geq1)$ is a closed $G$-invariant and
reflexive. To prove symmetry assume $(y,x)\in\NRP^{[d]}(X)$, i.e.
$\llcorner^{[d+1]}(y,x)\in C_{G}^{[d+1]}(X)$. Permuting coordinates
(see Proposition \ref{prop:dynamical cubespaces are cubespaces}) we
have $\urcorner^{[d+1]}(x,y)\in C_{G}^{[d+1]}(X)$. Projecting we have
$\urcorner^{[d]}(x,y)\in C_{G}^{[d]}(X)$. By Corollary \ref{cor:approaching constant}
there is a sequence $f_{k}\in\Fcal^{[d]}$ so that $f_{k}\urcorner^{[d]}(x,y)\to x^{[d]}$.
Thus $(f_{k}\times f_{k})\urcorner^{[d+1]}(x,y)=(f_{k}\times f_{k})(\urcorner^{[d]}(x,y),y^{[d]})\to(x^{[d]},\urcorner^{[d]}(y,x))$
(see Subsection \ref{sub:doubling}). Permuting coordinates again
we have $\llcorner^{[d+1]}(x,y)\in C_{G}^{[d+1]}(X)$ as desired.

To prove transitivity assume $(x,y),(y,z)\in\NRP^{[d]}(X)$. By symmetry
$(z,y)\in\NRP^{[d]}(X)$. Permuting coordinates we have $\urcorner^{[d+1]}(y,x),\urcorner^{[d+1]}(y,z)\in C_{G}^{[d+1]}(X)$.
By Corollary \ref{cor:xyy is minimal} $\urcorner^{[d+1]}(y,x),\urcorner^{[d+1]}(y,z)$
induce minimal $\Fcal^{[d+1]}$-subsystems of $C_{y}^{[d+1]}(X)$
and thus by Theorem \ref{thm:unique minimal} $\urcorner^{[d+1]}(y,z)\in\overline{\Fcal^{[d+1]}\urcorner^{[d+1]}(y,x)}$.
As $(\Fcal^{[d+1]},\overline{\Fcal^{[d+1]}\urcorner^{[d+1]}(y,x)})\to Y_{x}^{[d+1]}(X)$
given by $w\mapsto(x,w_{*})$ is an $\Fcal^{[d+1]}$-isomorphism it
follows that $\urcorner^{[d+1]}(x,z)\in Y_{x}^{[d+1]}(X)$. Permuting
coordinates we have $\llcorner^{[d+1]}(z,x)\in C_{G}^{[d+1]}(X)$ .
Thus $(z,x)\in\NRP^{[d]}(X)$. By symmetry $(x,z)\in\NRP^{[d]}(X)$
as desired.
\end{proof}

\section{Lifting $\NRP^{[d]}(X)$ from factors to extensions\label{sec:Lifting--from}}

Let $(G,X)$ be a minimal t.d.s. In Lemma \ref{lem:elementary} we
note that if $\pi:(G,X)\to(G,Y)$ is a dynamical morphism then $\pi\times\pi(\NRP^{[d]}(X))\subset\NRP^{[d]}(Y)$.
In \cite[Theorem 6.4]{SY12} it is proven that for $G$ abelian equality
holds, i.e, $\pi\times\pi(\NRP^{[d]}(X))=\NRP^{[d]}(Y)$. We next show that the same is true for general minimal group actions. Our proof follows
the framework of the proof of \cite[Theorem 6.4]{SY12}.
\begin{thm}
\label{thm:lifting}Let $(G,X)$ be a minimal topological dynamical
system. If $\pi:(G,X)\to(G,Y)$ is a dynamical morphism then $\pi\times\pi(\NRP^{[d]}(X))=\NRP^{[d]}(Y)$.\end{thm}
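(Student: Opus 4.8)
The plan is to prove the two inclusions separately; the inclusion $\pi\times\pi(\NRP^{[d]}(X))\subset\NRP^{[d]}(Y)$ is already recorded in Lemma \ref{lem:elementary} (it follows simply because $\pi^{[d+1]}$ maps $C_G^{[d+1]}(X)$ into $C_G^{[d+1]}(Y)$ and sends lower corners to lower corners), so the real content is the reverse inclusion $\NRP^{[d]}(Y)\subset\pi\times\pi(\NRP^{[d]}(X))$. So fix $(y_0,y_1)\in\NRP^{[d]}(Y)$, meaning $\llcorner^{[d+1]}(y_0,y_1)\in C_G^{[d+1]}(Y)$, and pick any $x_0\in X$ with $\pi(x_0)=y_0$. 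The goal is to produce $x_1\in X$ with $\pi(x_1)=y_1$ and $\llcorner^{[d+1]}(x_0,x_1)\in C_G^{[d+1]}(X)$. The natural strategy, following \cite[Theorem 6.4]{SY12}, is to lift the whole configuration: find a configuration $\bar c\in C_{x_0}^{[d+1]}(X)$ (so $\bar c$ is a dynamical cube over $X$ with $\bar c(\vec 0)=x_0$) which projects under $\pi^{[d+1]}$ to $\llcorner^{[d+1]}(y_0,y_1)$. Then $\bar c(\o)$ and $x_0$ have the same image in $Y$ for all $\o\ne\vec 1$, while $\bar c(\vec 1)$ projects to $y_1$.

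First I would set $\bar c$ to be a limit point (over the appropriate net) of configurations $g_i x_i^{[d+1]}$ with $g_i\in\Hcal^{[d+1]}$, $x_i\in X$, chosen so that $\pi^{[d+1]}(g_i x_i^{[d+1]})=\phi^{[d+1]}(g_i)\pi(x_i)^{[d+1]}\to\llcorner^{[d+1]}(y_0,y_1)$; such nets exist by definition of $C_G^{[d+1]}(Y)$ and surjectivity of $\pi$, and after passing to a subnet $g_i x_i^{[d+1]}\to\bar c\in C_G^{[d+1]}(X)$. Translating by a suitable diagonal element and using minimality of $(\Hcal^{[d+1]},C_G^{[d+1]}(X))$ (Proposition \ref{prop:minimality}), I can further arrange $\bar c(\vec 0)=x_0$, i.e. $\bar c\in C_{x_0}^{[d+1]}(X)$. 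Now $\bar c$ is a lift of a configuration which is \emph{constant equal to $y_0$ off $\vec 1$}; I want to correct $\bar c$ so that it becomes \emph{exactly} $\llcorner^{[d+1]}(x_0,x_1)$, i.e. constant equal to $x_0$ off $\vec 1$.

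This correction is where the key machinery of Section \ref{sec:minimal-subsystems-for} enters, and it is the step I expect to be the main obstacle. The tool is Corollary \ref{cor:approaching constant}: since $\bar c\in C_{x_0}^{[d+1]}(X)$, we have $x_0^{[d+1]}\in\overline{\Fcal^{[d+1]}\bar c}$, so there is a net $f_k\in\Fcal^{[d+1]}$ with $f_k\bar c\to x_0^{[d+1]}$. However, applying $f_k$ to $\bar c$ also moves the $\vec 1$-coordinate, and in the limit the $\vec 1$-coordinate goes to $x_0$, losing the information about $y_1$; moreover $f_k$ acts on $Y$ as elements of $\Fcal^{[d+1]}$ for the $Y$-action and so does not fix $y_1$. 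The fix, exactly as in \cite{SY12}, is to work with the unique minimal subsystem $Y_{x_0}^{[d+1]}(X)$ (Theorem \ref{thm:unique minimal}) and to track the $\Fcal$-orbit closure of $\bar c$ together with the fibre structure of $\pi^{[d+1]}$: consider $\overline{\Fcal^{[d+1]}\bar c}$, which (after an idempotent argument as in Lemma \ref{lem:useful}) contains a point of the form $\llcorner^{[d+1]}(x_0,x_1')$, where $x_1'$ still lies in the $\pi$-fibre of $y_1$. Concretely, one applies the useful lemma (Lemma \ref{lem:useful}) on the $Y$-level via the factor map $\pi^{[d+1]}$ restricted to minimal subsystems, together with the fact that over the point $\llcorner^{[d+1]}(y_0,y_1)\in C_G^{[d+1]}(Y)$ — which by Corollary \ref{cor:xyy is minimal} is $\Fcal^{[d+1]}$-minimal in $C_{y_0}^{[d+1]}(Y)$ — the fibre of the factor map between the minimal systems $(\Fcal^{[d+1]},Y_{x_0}^{[d+1]}(X))$ and $(\Fcal^{[d+1]},Y_{y_0}^{[d+1]}(Y))$ is nonempty, producing the desired lift in $Y_{x_0}^{[d+1]}(X)$.

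Once such a lift $\llcorner^{[d+1]}(x_0,x_1)\in C_{x_0}^{[d+1]}(X)\subset C_G^{[d+1]}(X)$ with $\pi(x_1)=y_1$ is obtained, we conclude directly that $(x_0,x_1)\in\NRP^{[d]}(X)$ by Definition \ref{def:general RPd}, and $\pi\times\pi(x_0,x_1)=(y_0,y_1)$, giving $\NRP^{[d]}(Y)\subset\pi\times\pi(\NRP^{[d]}(X))$; combined with Lemma \ref{lem:elementary} this proves the theorem. The only delicate points are (i) arranging $\bar c(\vec 0)=x_0$ via minimality, which is routine given Proposition \ref{prop:minimality}, and (ii) the fibre-lifting through the factor map on the level of the unique minimal subsystems, which is the heart of the argument and where I would essentially reproduce, with $\Fcal^{[d+1]}$ replacing the abelian cube group, the idempotent bookkeeping of \cite[Theorem 6.4]{SY12} — using Proposition \ref{prop:Ellis fundamentals}, Lemma \ref{lem:idempotent}, and Theorem \ref{thm:unique minimal} — rather than any structure theory.
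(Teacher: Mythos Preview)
Your outline correctly identifies that the easy inclusion is Lemma~\ref{lem:elementary}, and that the real work is to lift a pair $(y_0,y_1)\in\NRP^{[d]}(Y)$ to $\NRP^{[d]}(X)$. However, the ``correction'' step~(ii) contains a genuine gap, and it is not a matter of bookkeeping. Lifting $\llcorner^{[d+1]}(y_0,y_1)$ through the factor map $Y_{x_0}^{[d+1]}(X)\to Y_{y_0}^{[d+1]}(Y)$ only produces a cube $\bar c$ with $\bar c(\vec 0)=x_0$ and $\pi^{[d+1]}(\bar c)=\llcorner^{[d+1]}(y_0,y_1)$; it does \emph{not} force $\bar c(\o)=x_0$ for all $\o\ne\vec 1$. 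Your proposed fix is to act by some net $f_k\in\Fcal^{[d+1]}$, but any such action moves the $\vec 1$-vertex on the $Y$-level as well: if $f_k\bar c\to\llcorner^{[d+1]}(x_0,x_1')$ then necessarily $f_k\,\llcorner^{[d+1]}(y_0,y_1)\to\llcorner^{[d+1]}(y_0,\pi(x_1'))$, and there is no reason for $\pi(x_1')=y_1$. Achieving simultaneously ``all off-$\vec 1$ vertices equal $x_0$'' and ``the $\vec 1$-vertex still lies over $y_1$'' are two independent constraints on the same net $f_k$, and nothing in Lemma~\ref{lem:useful} or Theorem~\ref{thm:unique minimal} gives you that for free. (Also minor: your step~(i), adjusting $\bar c(\vec 0)$ to equal a preassigned $x_0$ by a diagonal translation, would move the $Y$-projection; but this is harmless since one can simply set $x_0:=\bar c(\vec 0)$.)

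The paper handles exactly this difficulty, but in a quite different and more elaborate way. First it uses transitivity of $\NRP^{[d]}(X)$ (Theorem~\ref{thm:main}) together with a proximality argument to reduce to the case where $(y_0,y_1)$ is a $G$-minimal point of $Y\times Y$. Then, starting from a lift $c\in C_{q_1}^{[d+1]}(X)$ of $(y_0^{[d]},\urcorner^{[d]}(y_1,y_0))$ (produced via Corollary~\ref{cor:xxyx is minimal}), it runs an \emph{induction on lower hyperfaces}: using Corollary~\ref{cor:approaching constant} and doubling, it successively forces $c$ to equal $q_1$ on $F_{d+1}\cup\cdots\cup F_i$, explicitly \emph{giving up} control of the $Y$-projection at the distinguished vertex (which drifts through points $z_{d+1}=y_1,z_d,\ldots,z_1$ tied together by orbit-closure inclusions). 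At the end one has $(q_1,q_2)\in\NRP^{[d]}(X)$ with $\pi(q_2)=z_1$, and the preliminary reduction to $(y_0,y_1)$ minimal is then used to find a point in $\overline{\Or((q_1,q_2),G)}$ lying over $(y_0,y_1)$. In short: the paper does not try to keep the $\vec 1$-projection fixed while straightening the cube; it lets it wander and recovers it afterwards via minimality of the pair. Your sketch is missing both the reduction step and this inductive hyperface-fixing mechanism.
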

\begin{proof}
Let $(y_{1},y_{2})\in\NRP^{[d]}(Y)$. Our goal is to find $(x_{1},x_{2})\in\NRP^{[d]}(X)$
such that $\pi(x_{1})=y_{1}$ and $\pi(x_{2})=y_{2}$. This will be
referred to as in the sequel as lifting $(y_{1},y_{2})$. By Proposition
\ref{prop:Ellis fundamentals}(4) there is a minimal point $(y_{1}',y_{2}')\in\overline{\Or((y_{1},y_{2}),G)}$
such that $(y_{1}',y_{2}')$ is proximal to $(y_{1},y_{2})$. Note
$(y_{1}',y_{2}')\in\NRP^{[d]}(Y)$ as $\NRP^{[d]}(Y)$ is $G$-invariant
and closed. Since $(y_{1},y_{1}'),(y_{2},y_{2}')\in P(Y)$, then by
\cite[Lemma 6.3]{SY12} there are $x_{1},x_{2}\in X$ such that $\pi\times\pi(x_{1},x_{2})=(y_{1},y_{2})$
and $(x_{1}',x_{1})$, $(x_{2}',x_{2})$ $\in$ $\PP(X)$. By Lemma
\ref{lem:elementary}(1) $(x_{1}',x_{1})$, $(x_{2}',x_{2})$ $\in$
$\NRP^{[d]}(X)$. Assume we have proven one can lift $(y_{1}',y_{2}')$,
i.e., there is $(x_{1}',x_{2}')\in\NRP^{[d]}(X)$ with $\pi\times\pi(x_{1}',x_{2}')=(y_{1}',y_{2}')$.
By the transitivity of $\NRP^{[d]}(X)$ (Theorem \ref{thm:main}),
$(x_{1},x_{1}'),(x_{1}',x_{2}'),(x_{2}',x_{2})\in\NRP^{[d]}(X)$ imply
$(x_{1},x_{2})\in\NRP^{[d]}(X)$. Hence we can assume without loss of generality that $(y_{1},y_{2})$ is a minimal point of $(Y\times Y,G)$.

Let $q_{1}\in\pi^{-1}(y_{1})$. We will find $q_{2}\in\pi^{-1}(y_{2})$
such that $(q_{1},q_{2})\in\NRP^{[d]}(X)$ and such that some $(x_{1},x_{2})$
in the orbit closure of $(q_{1},q_{2})$ lifts $(y_{1},y_{2})$. As
an intermediary step we construct cubes in $C_{q_{1}}^{[d+1]}(X)$
with an increasing number of vertices whose value is $q_{1}$.

As $(y_{1},y_{2})\in\NRP^{[d]}(Y)$, by Corollary \ref{cor:xxyx is minimal}
there is a sequence $f_{k}\in\Fcal^{d+1},$ $f_{k}y_{1}^{[d+1]}\to(y_{1}^{[d-1]},\urcorner^{[d-1]}(y_{2},y_{1}))$.
Let $c\in C_{q_{1}}^{[d+1]}(X)$ be an accumulation point of the sequence
$f_{k}q_{1}^{[d+1]}$. Note $\pi(c)=(y_{1}^{[d-1]},\urcorner^{[d-1]}(y_{2},y_{1}))$.
Let $F_{i}=\{\o\in\{0,1\}^{d+1}|\ \o_{i}=0\}$ be an enumeration of lower hyperfaces of $\{0,1\}^{d+1}$. Inductively we will construct elements $c_{d+1},c_{d},\ldots,c_{1}\in C_{q_{1}}^{[d+1]}(X)$ and
$z_{d+1}=y_{2},z_{d},\ldots,z_{1}\in Y$ such that for $i=d+1,d,\ldots,1$:
\begin{enumerate}
\item $c_{i}(\omega)=q_{1}$ for $\omega\in F_{d+1}\cup F_{d}\cup\cdots\cup F_{i}$
\item $\pi(c_{i})(0\cdots0\stackrel{i}{1}\cdots1)=z_{i}$
\item $\pi(c_{i})(\omega)=y_{1}$ for all $\omega\neq(0\cdots0\stackrel{i}{1}\cdots1)$
\item $(z_{i},y_{1})\in\overline{\Or((y_{1},z_{i+1}),G)}$ (only for $i\leq d)$
\end{enumerate}
Assume this has been achieved. Let us consider the element $c_{1}$.
As $F_{d+1}\cup F_{d}\cup\cdots\cup F_{1}=\{0,1\}_{*}^{d+1}$, we
have $c_{1}=\llcorner_{d+1}(q_{1},q_{2})$ for some $q_{2}\in X$.
Thus $(q_{1},q_{2})\in\NRP^{[d]}(X)$. By (2) $\pi(q_{2})=z_{1}$.
By ($4)$
\[
(z_{1},y_{1})\in\overline{\Or((y_{1},z_{2}),G)}\subset\overline{\Or((z_{3},y_{1}),G)}\subset\overline{\Or((y_{1},z_{4}),G)}\subset\cdots
\]
Thus $(z_{1},y_{1})\in\overline{\Or((z_{d+1},y_{1}),G)}$ or $(z_{1},y_{1})\in\overline{\Or((y_{1},z_{d+1}),G)}$.
Assume without loss of generality the first case. As $z_{d+1}=y_{2}$
and $(y_{1},y_{2})$ is a minimal point of $(Y\times Y,G)$, $(y_{1},y_{2})\in\overline{\Or((y_{1},z_{1}),G)}$.
Let $g_{k}\in G$ so that $(g_{k}y_{1},g_{k}z_{1})\to(y_{1},y_{2})$.
Assume without loss of generality $(g_{k}q_{1},g_{k}q_{2})\to(x_{1},x_{2})$.
As $\NRP^{[d]}(X)$ is $G$-invariant and closed, $(x_{1},x_{2})\in\NRP^{[d]}(X)$.
Moreover we have, as desired:

\[
\pi\times\pi(x_{1},x_{2})=\lim_{k}{}(g_{k}\pi(q_{1}),g_{k}\pi(q_{2}))=\lim_{k}(g_{k}y_{1},g_{k}z_{1})=(y_{1},y_{2})
\]
We now return to the inductive construction of $c_{d+1},c_{d},\ldots,c_{1}\in C_{q_{1}}^{[d+1]}(X)$.
By Corollary \ref{cor:approaching constant}, there is a sequence
$f_{k}\in\Fcal^{[d]}$ such that $f_{k}c_{|F_{d+1}}\to q_{1}^{[d-1]}$.
Let $c_{d+1}\in C_{q_{1}}^{[d+1]}(X)$ be an accumulation point of
the sequence $f_{k}\times f_{k}c$. Thus $c_{d+1}(\omega)=q_{1}$
for $\omega\in F_{d+1}$ and property (1) holds for $i=d+1$. Combining
$f_{k}\pi(c)_{|F_{d+1}}\to y_{1}^{[d-1]}$ with $\pi(c)=(y_{1}^{[d-1]},\urcorner^{[d-1]}(y_{2},y_{1}))$
we have $\pi(c_{d+1})=\lim_{k}\pi(f_{k}\times f_{k}(y_{1}^{[d-1]},\urcorner^{[d-1]}(y_{2},y_{1})))=(y_{1}^{[d-1]},\urcorner^{[d-1]}(y_{2},y_{1}))$
which implies property (2). Thus denoting $z_{d+1}=y_{2}$ we have
$\pi(c_{d+1})(0\cdots01)=z_{d+1}$ which is property (2) for $i=d+1$.

\begin{figure}[h]
\centering{}\begin{tikzpicture}[thick,scale=2]
\node (2A1) at (0, 0){$q_1$};
\node (2A2) at (0, 1){};
  \node (2A3) at (1, 1){};
  \node (2A4) at (1, 0){$q_1$};
 \node (2B1) at (0.5, 0.5){$q_1$};
   \node (2B2) at (0.5, 1.5){};
   \node (2B3) at (1.5, 1.5){};
 \node (2B4) at (1.5, 0.5){$q_1$};
    \draw (2A1) -- (2A2);
  \draw (2A2) -- (2A3);
\draw (2A3) -- (2A4);
 \draw (2A4) -- (2A1);
    \draw[dashed] (2A1) -- (2B1);
  \draw[dashed] (2B1) -- (2B2);
 \draw[very thick] (2A2) -- (2B2);
  \draw[very thick] (2B2) -- (2B3);
  \draw[very thick] (2A3) -- (2B3);
  \draw[very thick] (2A4) -- (2B4);
  \draw[very thick] (2B4) -- (2B3);
\draw[dashed] (2B1) -- (2B4);

    \node (A1) at (2, 0){$y_1$};
   \node (A2) at (2, 1){$z_3$};
\node (A3) at (3, 1){$y_1$};
   \node (A4) at (3, 0){$y_1$};
   \node (B1) at (2.5, 0.5){$y_1$};
 \node (B2) at (2.5, 1.5){$y_1$};
 \node (B3) at (3.5, 1.5){$y_1$};
   \node (B4) at (3.5, 0.5){$y_1$};
    \draw (A1) -- (A2);
  \draw (A2) -- (A3);
 \draw (A3) -- (A4);
  \draw (A4) -- (A1);
    \draw[dashed] (A1) -- (B1);
    \draw[dashed] (B1) -- (B2);
   \draw[very thick] (A2) -- (B2);
  \draw[very thick] (B2) -- (B3);
    \draw[very thick] (A3) -- (B3);
   \draw[very thick] (A4) -- (B4);
 \draw[very thick] (B4) -- (B3);
  \draw[dashed] (B1) -- (B4);
\end{tikzpicture} \caption{$c_{3}$ and $\pi(c_{3})$ for $d=2$.}
\end{figure}

\noindent Assume we have already constructed $c_{i+1}\in C_{q_{1}}^{[d+1]}(X)$
and $z_{i+1}\in Y$. By Corollary \ref{cor:approaching constant},
there is a sequence $f_{k}\in\Fcal^{[d]}$ such that $f_{k}c_{i+1|F_{i}}\to q_{1}^{[d-1]}$.
Let $c_{i}\in C_{q_{1}}^{[d+1]}(X)$ be an accumulation point of the
sequence $D_{i}(f_{k})c_{i+1}$ (for the notation $D_{i}(\cdot)$
see Subsection \ref{sub:doubling}). Clearly $c_{i|F_{i}}=q_{1}^{[d-1]}$.
In order to establish property (1), we have to show in addition that for
$\omega\in(F_{d+1}\cup F_{d}\cup\cdots\cup F_{i+1})\setminus F_{i}$
it holds that
$c_{i}(\omega)=q_{1}$. Define: $\phi_{i}:\{0,1\}^{d+1}\to F_{i}$
to be the projection on $F_{i}$, i.e., $\phi_{i}(\omega_{1}\omega_{2}\cdots\omega_{i}\cdots\omega_{d+1})=(\omega_{1}\omega_{2}\cdots\stackrel{i}{0}\cdots\omega_{d+1})$.
Fix $\omega\in F_{j}\setminus F_{i}$ for $j>i.$ As $\omega,\phi_{i}(\omega)\in F_{j}$,
$c_{i+1}(\omega)=c_{i+1}(\phi_{i}(\omega))=q_{1}.$ By the definition
of doubling, the same is true for $D_{i}(f_{k})$, i.e., $D_{i}(f_{k})(\phi_{i}(\omega))=D_{i}(f_{k})(\omega)$
and thus we conclude $c_{i}(\omega)=c_{i}(\phi_{i}(\omega))=q_{1}$
as desired, where the last equality follows from $\phi_{i}(\omega)\in F_{i}.$
Denote $\pi(c_{i})(0\cdots0\stackrel{i}{1}\cdots1)=z_{i}.$ We now
establish property (3). If $\omega\in F_{i}$ then as $c_{i}(\omega)=q_{1}$
it follows $\pi(c_{i}(\omega))=y_{1}$. Thus we only need to treat
the case $\omega\in F_{i}^{c}\setminus\{(0\cdots0\stackrel{i}{1}~\cdots1)\}$.
By the inductive construction $\pi(c_{i+1})(\omega)=y_{1}$ for all
$\omega\neq(0\cdots0\stackrel{i+1}{1}~\cdots1)$. Note that $\phi_{i}(\omega)=(0\cdots0\stackrel{i+1}{1}\cdots1)$
implies $\omega=(0\cdots0\stackrel{i}{1}\cdots1)$. Thus as $(0\cdots0\stackrel{i+1}{1}\cdots1)\in F_{i}$,
we conclude that for $\omega\in F_{i}^{c}\setminus\{(0\cdots0\stackrel{i}{1}\cdots1)\}$,
$\pi(c_{i+1}(\omega))=\pi(c_{i+1}(\phi_{i}(\omega)))=y_{1}$. By the
definition of doubling, $\pi(c_{i}(\omega))=\pi(c_{i}(\phi_{i}(\omega)))=\pi(q_{1})=y_{1}$
as desired. From property (3) we have\\ $\pi(c_{i}(0\cdots0\stackrel{i+1}{1}~\cdots1))~=~y_{1}$
which implies for $g_{k}=D_{i}(f_{k})(0\cdots0\stackrel{i}{1}\cdots1)=D_{i}(f_{k})(0\cdots0\stackrel{i+1}{1}~\cdots1)$ that

\[
y_{1}=\lim_{k}g_{k}\pi(c_{i+1}(0\cdots0\stackrel{i+1}{1}\cdots1))=\lim_{k}g_{k}z_{i+1}.
\]

\noindent %
\begin{comment}
where the last equality follows from the definition of doubling.
\end{comment}
Similarly as $\pi(c_{i+1}(0\cdots0\stackrel{i}{1}\cdots1))=y_{1}$,
$z_{i}=\lim_{k}g_{k}y_{1}$. We thus have $(z_{i},y_{1})=\lim_{k}g_{k}\times g_{k}(y_{1},z_{i+1})$
which is property (4).

\begin{figure}[h]
\centering{}\begin{tikzpicture}[thick,scale=2]
\node (2A1) at (0, 0){$q_1$};
\node (2A2) at (0, 1){$q_1$};
  \node (2A3) at (1, 1){$q_1$};
  \node (2A4) at (1, 0){$q_1$};
 \node (2B1) at (0.5, 0.5){$q_1$};
   \node (2B2) at (0.5, 1.5){};
   \node (2B3) at (1.5, 1.5){};
 \node (2B4) at (1.5, 0.5){$q_1$};
    \draw (2A1) -- (2A2);
  \draw (2A2) -- (2A3);
\draw (2A3) -- (2A4);
 \draw (2A4) -- (2A1);
    \draw[dashed] (2A1) -- (2B1);
  \draw[dashed] (2B1) -- (2B2);
 \draw[very thick] (2A2) -- (2B2);
  \draw[very thick] (2B2) -- (2B3);
  \draw[very thick] (2A3) -- (2B3);
  \draw[very thick] (2A4) -- (2B4);
  \draw[very thick] (2B4) -- (2B3);
\draw[dashed] (2B1) -- (2B4);

    \node (A1) at (2, 0){$y_1$};
   \node (A2) at (2, 1){$y_1$};
\node (A3) at (3, 1){$y_1$};
   \node (A4) at (3, 0){$y_1$};
   \node (B1) at (2.5, 0.5){$y_1$};
 \node (B2) at (2.5, 1.5){$z_2$};
 \node (B3) at (3.5, 1.5){$y_1$};
   \node (B4) at (3.5, 0.5){$y_1$};
    \draw (A1) -- (A2);
  \draw (A2) -- (A3);
 \draw (A3) -- (A4);
  \draw (A4) -- (A1);
    \draw[dashed] (A1) -- (B1);
    \draw[dashed] (B1) -- (B2);
   \draw[very thick] (A2) -- (B2);
  \draw[very thick] (B2) -- (B3);
    \draw[very thick] (A3) -- (B3);
   \draw[very thick] (A4) -- (B4);
 \draw[very thick] (B4) -- (B3);
  \draw[dashed] (B1) -- (B4);
\end{tikzpicture} \caption{$c_{2}$ and $\pi(c_{2})$ for $d=2$.}
\end{figure}

\end{proof}

\section{Systems of order $d$ \label{sec:systems-of-order}}

\subsection{Overview}

\label{subsec:Overview}

In this section we investigate the structure of minimal systems whose
regionally proximal relation of order $d$ is trivial.
\begin{defn}
Let $d\geq1$. A t.d.s $(G,X)$ is called a \textbf{system of order
$d$ }if $d$ is the minimal integer such that $\NRP^{[d]}(X)=\Delta$.

The fundamental example of systems of order \textbf{$d$ }is given
by nilsystems:
\end{defn}

\begin{defn}
Let $d\ge1$ be an integer and assume that $L$ is a nilpotent Lie\footnote{A \textit{Lie group} is a second countable topological group $G$ that has a differentiable structure such that the map $G^{2}\to G:(g,h)\mapsto gh^{-1}$
is differentiable. Note we do not assume that Lie groups are connected.
In particular countable discrete groups are Lie.} group of nilpotency class $d$ and $\Gamma\subset L$ a discrete,
cocompact subgroup of $L$. Denote $X=L/\Gamma$. Notice that $L$ acts naturally
on $X$ by left translations: $l\Gamma\rightarrow gl\Gamma$ for $g\in L$.
Let $G$ be a topological group and let $\phi:G\rightarrow L$ be
a continuous homomorphism, then the induced action $(G,X)$ is called
a \textbf{nilsystem of order $d$}.\end{defn}
\begin{thm}
\label{thm: ex_nilsystems}Let $(G,X)$ be a minimal nilsystem of order $d$ where
$G$ is an arbitrary topological group, then it is a system of order
at most \textbf{$d$}.\end{thm}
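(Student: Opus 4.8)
The statement is in essence the second example worked out in Example~\ref{ex:nrp}, and the plan is to present its argument in full. Write $X=L/\Gamma$, where $L$ is a nilpotent Lie group of nilpotency class $d$ (so the $(d+1)$-th term $L_{d+1}$ of its lower central series is trivial), $\Gamma\le L$ is discrete and cocompact, and $G$ acts by left translations through a continuous homomorphism $\phi:G\to L$. Recall that $(G,X)$ being a system of order at most $d$ means exactly that $\NRP^{[d]}(X)=\Del$, so it is enough to establish this equality. (Minimality, although assumed in the statement, plays no role below.)

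The first step is to replace the Host--Kra cube group of $G$ by that of $L$. Applying $\phi$ coordinatewise gives a continuous homomorphism $\phi^{[d+1]}:G^{[d+1]}\to L^{[d+1]}$ carrying each generator $[g]_F$ of $\Hcal^{[d+1]}$ to $[\phi(g)]_F$; hence $\phi^{[d+1]}(\Hcal^{[d+1]})$ lies inside $\Hcal^{[d+1]}_L$, the Host--Kra cube group of $L$. For $x=l\Gamma$ the constant configuration $x^{[d+1]}$ equals $(l,\ldots,l)\,\Gamma^{[d+1]}$ with $(l,\ldots,l)\in\Del^{[d+1]}(L)\subseteq\Hcal^{[d+1]}_L$ (Proposition~\ref{prop:H=00003DFDiag} applied to $L$), and therefore
\[
\{g\,x^{[d+1]}\mid g\in\Hcal^{[d+1]},\ x\in X\}\subseteq\Hcal^{[d+1]}_L\cdot\Gamma^{[d+1]}.
\]
By \cite[Lemma~E.10]{GT10} (see also \cite[Proposition~2.5]{GMVI}), since $L_{d+1}=\{e\}$ the set $\Hcal^{[d+1]}_L\cdot\Gamma^{[d+1]}$ is compact, hence closed, in $(L/\Gamma)^{[d+1]}$. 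Passing to closures we get $C_G^{[d+1]}(X)\subseteq\Hcal^{[d+1]}_L\cdot\Gamma^{[d+1]}$, which we abbreviate $C_L^{[d+1]}(X)$.

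The key input is the uniqueness of corner completion in a nilmanifold of degree $d$: by \cite[Proposition~2.6]{GMVI}, if $c,c'\in C_L^{[d+1]}(X)$ satisfy $c(\omega)=c'(\omega)$ for every $\omega\neq\vec{1}$, then also $c(\vec{1})=c'(\vec{1})$ (morally because the $(d+1)$-fold commutator correction governing the $\vec{1}$-coordinate lies in $L_{d+1}=\{e\}$). Granting this, let $(x,y)\in\NRP^{[d]}(X)$, i.e.\ $\llcorner^{[d+1]}(x,y)\in C_G^{[d+1]}(X)\subseteq C_L^{[d+1]}(X)$. The constant configuration $x^{[d+1]}$ also lies in $C_L^{[d+1]}(X)$, and it agrees with $\llcorner^{[d+1]}(x,y)$ at every vertex $\omega\neq\vec{1}$ (both equal $x$ there); corner uniqueness now forces $y=x$. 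Thus $\NRP^{[d]}(X)=\Del$ and $(G,X)$ is a system of order at most $d$, as claimed.

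The dynamical bookkeeping above is entirely formal; all the substance sits in the two structural facts about nilmanifolds that were imported. In a self-contained treatment the main obstacle would be to redo the Baker--Campbell--Hausdorff / Host--Kra cube-group identities (as in \cite[Appendix~E]{GT10}) establishing both that $\Hcal^{[d+1]}_L\cdot\Gamma^{[d+1]}$ is closed and that the $\vec{1}$-coordinate of a cube in $C_L^{[d+1]}(X)$ is determined, modulo $L_{d+1}$, by its other $2^{d+1}-1$ coordinates; that is precisely the step where the nilpotency of $L$ is consumed.
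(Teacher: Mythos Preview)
Your proof is correct and follows exactly the route the paper takes: the paper's proof of this theorem is literally ``See Example~\ref{ex:nrp}'', and you have reproduced that example's argument with the bookkeeping made explicit (carefully distinguishing $\Hcal^{[d+1]}$ of $G$ from $\Hcal^{[d+1]}_L$ of $L$). Your observation that minimality is not actually needed---because only the inclusion $C_G^{[d+1]}(X)\subseteq\Hcal^{[d+1]}_L\cdot\Gamma^{[d+1]}$ is required, rather than the equality that the paper obtains via Proposition~\ref{prop:face rep for Cd}---is a small but genuine refinement of the paper's presentation.
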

\begin{proof}
See Example \ref{ex:nrp}.
\end{proof}

A natural question which arises is if one can characterize systems
of order $d$ in terms of nilsystems. We will return to this question
in Subsection \ref{sub:Strong-structure-theorem}. In the meantime
we will opt for a more abstract treatment. The next corollary provides
a canonical way to generate systems of order at most \textbf{$d$}.
\begin{cor}
\label{cor:canonical factor is d system}Let $(G,X)$ be a minimal
t.d.s, then $\NRP^{[d]}(X/\NRP^{[d]}(X))=\Delta$, i.e. $(G,X/\NRP^{[d]}(X))$
is a system of order at most $d$.\end{cor}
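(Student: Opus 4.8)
The plan is to deduce this from the lifting theorem (Theorem~\ref{thm:lifting}) applied to the canonical quotient map. First I would set $R\triangleq\NRP^{[d]}(X)$. By Theorem~\ref{thm:main}, $R$ is a closed $G$-invariant equivalence relation on the compact Hausdorff space $X$, so the quotient $Y\triangleq X/R$ is again a compact Hausdorff space, the induced $G$-action makes $(G,Y)$ a t.d.s, this t.d.s is minimal (being a factor of the minimal system $(G,X)$), and the quotient map $\pi\colon X\to Y$ is a surjective dynamical morphism.

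Next I would invoke Theorem~\ref{thm:lifting} for the morphism $\pi$, which yields $\pi\times\pi(\NRP^{[d]}(X))=\NRP^{[d]}(Y)$, that is $\pi\times\pi(R)=\NRP^{[d]}(Y)$. It then remains only to identify the left-hand side with the diagonal $\Delta_{Y}$ of $Y\times Y$: if $(x_{1},x_{2})\in R$ then by the very definition of the quotient $\pi(x_{1})=\pi(x_{2})$, so $\pi\times\pi(R)\subseteq\Delta_{Y}$; conversely $\Delta_{Y}\subseteq\pi\times\pi(R)$ because $R$ is reflexive (Theorem~\ref{thm:main}) and $\pi$ is onto. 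Hence $\NRP^{[d]}(X/\NRP^{[d]}(X))=\NRP^{[d]}(Y)=\Delta_{Y}$, and by the definition of a system of order at most $d$ this shows that $(G,X/\NRP^{[d]}(X))$ is such a system.

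There is essentially no obstacle to this argument: the entire substance is carried by the lifting theorem, Theorem~\ref{thm:lifting}, whose proof occupies Section~\ref{sec:Lifting--from}; the present corollary is a purely formal consequence. The only point that needs a line of care is the elementary observation that collapsing a reflexive relation by its own quotient map produces precisely the diagonal, together with checking that $X/\NRP^{[d]}(X)$ is indeed a legitimate (minimal) t.d.s, which is immediate from Theorem~\ref{thm:main}.
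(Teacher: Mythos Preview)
Your proof is correct and follows essentially the same approach as the paper: apply Theorem~\ref{thm:lifting} to the quotient map $\pi\colon X\to Y=X/\NRP^{[d]}(X)$ and observe that $\pi\times\pi(\NRP^{[d]}(X))=\Delta_Y$. The paper's proof is the same argument compressed into a single line, while you have (appropriately) spelled out why the quotient is a legitimate minimal t.d.s and why collapsing $R$ by its own quotient yields the diagonal.
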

\begin{proof}
Let $Y=X/\NRP^{[d]}(X)$ and $\pi:X\to Y$ the associated factor map.
By Theorem \ref{thm:lifting}, $\NRP^{[d]}(Y)=\pi\times\pi(\NRP^{[d]}(X))=\Delta$.
\end{proof}
The next theorem shows that dividing out by the regionally proximal
relation results with the \textit{maximal} factor which is a system
of order at most $d$:
\begin{thm}
\label{thm:maximal d factor}Let $d\geq1$ and let $(G,X)$ be a minimal
topological dynamical system, then $\pi_{d}:(G,X)\to(G,X/\NRP^{[d]}(X))$
is the maximal factor of order at most $d$ of $(G,X)$. That is,
if $\phi:(G,X)\to(G,Y)$ is a factor map where $(G,Y)$ is a system
of order at most $d$, then there exists a factor map $\psi:(G,X/\NRP^{[d]}(X))\to(G,Y)$
such that $\phi=\psi\circ\pi_{d}$.\end{thm}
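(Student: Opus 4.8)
The plan is to verify the universal property directly. Suppose $\phi:(G,X)\to(G,Y)$ is a factor map with $(G,Y)$ a system of order at most $d$, i.e.\ $\NRP^{[d]}(Y)=\Delta$. Since $\phi$ is a dynamical morphism between minimal systems, Theorem \ref{thm:lifting} gives
\[
\phi\times\phi\big(\NRP^{[d]}(X)\big)=\NRP^{[d]}(Y)=\Delta .
\]
This says precisely that whenever $(x_1,x_2)\in\NRP^{[d]}(X)$ one has $\phi(x_1)=\phi(x_2)$; in other words, $\NRP^{[d]}(X)\subseteq\ker\phi$ (where $\ker\phi=\{(x_1,x_2):\phi(x_1)=\phi(x_2)\}$). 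First I would record this containment, which is the only place the lifting theorem is needed.

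Next I would invoke the standard fact from topological dynamics that a factor map between compact systems factors through the quotient by any closed invariant equivalence relation contained in its kernel. Concretely: by Theorem \ref{thm:main}, $\NRP^{[d]}(X)$ is a closed $G$-invariant equivalence relation, so $X/\NRP^{[d]}(X)$ is a compact Hausdorff space carrying a minimal $G$-action and $\pi_d:X\to X/\NRP^{[d]}(X)$ is a factor map. Since $\NRP^{[d]}(X)\subseteq\ker\phi$, every fibre $\pi_d^{-1}(z)$ is mapped by $\phi$ to a single point of $Y$, so there is a well-defined set map $\psi:X/\NRP^{[d]}(X)\to Y$ with $\psi\circ\pi_d=\phi$. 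Continuity of $\psi$ follows because $\pi_d$ is a continuous surjection between compact Hausdorff spaces, hence a quotient map, and $\psi\circ\pi_d=\phi$ is continuous; $G$-equivariance of $\psi$ follows from that of $\phi$ and $\pi_d$ together with surjectivity of $\pi_d$. Surjectivity of $\psi$ is immediate from surjectivity of $\phi$. Thus $\psi$ is the required factor map and $\phi=\psi\circ\pi_d$, with uniqueness of $\psi$ forced by surjectivity of $\pi_d$.

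The argument is essentially formal once Theorem \ref{thm:lifting} is in hand; the only genuine input is the equality $\phi\times\phi(\NRP^{[d]}(X))=\NRP^{[d]}(Y)$, and in particular the inclusion ``$\supseteq$'' of that equality, which is the nontrivial lifting direction. So I do not expect a serious obstacle here: the main work has already been done in Section \ref{sec:Lifting--from}. The only mild care needed is to check that $X/\NRP^{[d]}(X)$ is genuinely a t.d.s.\ in our category (compact Hausdorff, possibly non-metrizable) — but this is exactly what Theorem \ref{thm:main} provides, since the quotient of a compact Hausdorff space by a closed equivalence relation is again compact Hausdorff.
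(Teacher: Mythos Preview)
Your argument is correct and follows the paper's route, but you have misdiagnosed where the work lies. To show that $\phi$ factors through $\pi_d$ you only need the \emph{easy} inclusion $\phi\times\phi(\NRP^{[d]}(X))\subseteq\NRP^{[d]}(Y)$, which is Lemma~\ref{lem:elementary}(5) and follows directly from the definition; since $\NRP^{[d]}(Y)=\Delta$, this already gives $\NRP^{[d]}(X)\subseteq\ker\phi$. The nontrivial lifting direction ``$\supseteq$'' of Theorem~\ref{thm:lifting} is not used here, contrary to your closing remark. The paper's proof accordingly cites only Lemma~\ref{lem:elementary}(5) for this step.

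Where the lifting theorem \emph{is} genuinely needed is in the part you skipped: verifying that $(G,X/\NRP^{[d]}(X))$ is itself a system of order at most $d$, i.e.\ that $\NRP^{[d]}(X/\NRP^{[d]}(X))=\Delta$. This is Corollary~\ref{cor:canonical factor is d system}, whose proof applies Theorem~\ref{thm:lifting} to $\pi_d$. Without it you have only shown that every order-$d$ factor lies below $X/\NRP^{[d]}(X)$, not that $X/\NRP^{[d]}(X)$ is itself such a factor---so ``maximal factor of order at most $d$'' would not yet be justified. Adding a one-line reference to Corollary~\ref{cor:canonical factor is d system} (as the paper does) completes the argument.
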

\begin{proof}
By Corollary \ref{cor:canonical factor is d system} $(G,X/\NRP^{[d]}(X))$
is a system of order at most $d$. It is enough to show that $(x,y)\in\NRP^{[d]}(X)$,
implies $\phi(x)=\phi(y)$. Indeed by Lemma \ref{lem:elementary}(5)
$(x,y)\in\NRP^{[d]}(X)$ implies $(\phi(x),\phi(y))\in\NRP^{[d]}(Y)=\Delta$
and thus $\phi(x)=\phi(y)$.\end{proof}
\begin{rem}
\label{rem:Systems-of-order d are distal}Systems of finite order
are distal.\end{rem}
\begin{proof}
By Lemma \ref{lem:elementary}(1), $\PP(X)=\Delta.$
\end{proof}
We now move on to more advanced structure theorems for systems of
order $d$. The key tool is the theory of nilspaces introduced by
Antol\'\i n Camarena and Szegedy. We review this theory in Subsection \ref{sub:Nilspaces}
and in Subsection \ref{sub:Distal-systems-are} we prove that minimal
systems of finite order are nilspaces. This allows us to adapt the
so-called weak structure theorem of Antol\'\i n Camarena and Szegedy to the dynamical
context in Subsection \ref{sub:Weak-structure-theorem}. In Subsection
\ref{sub:Strong-structure-theorem} we quote the stronger Gutman-Manners-Varj{\'u} structure theorem for systems of finite order which hold under some
restrictions on the acting group.

\subsection{Nilspaces\label{sub:Nilspaces}}

A map $f=(f_{1},\ldots,f_{k}):\{0,1\}^{d}\to\{0,1\}^{k}$ is called
a \textbf{morphism of discrete cubes} if each coordinate function
$f_{j}(\o_{1},\ldots,\o_{d})$ is either identically  $0$,
identically $1$, or it equals either $\o_{i}$
or $\overline{\o_{i}}=1-\o_{i}$ for some $1\le i=i(j)\le d$.

In Subsection \ref{sub:Dynamical-cubespaces}
we introduced dynamical cubespaces. A (general) \textbf{cubespace} is a pair
$(X,{C^{\bullet}})$ consisting of a compact metric space $X$  together with a collection of closed subsets
$C^{[d]}(X)\subseteq X^{[d]}$,  for each integer $d\ge0$, called \textbf{cubes}, so that for any morphism of discrete cubes
$f:\{0,1\}^{d}\to\{0,1\}^{k}$ and any $c\in C^{[k]}(X)$, we have
$c\circ f\in C^{[d]}(X)$. We refer to this property as \textbf{cube
invariance}. When no confusion arises we denote the cubespace simply
by $X$. It is not hard to verify that dynamical cubespaces are cubespaces
(See Proposition \ref{prop:dynamical cubespaces are cubespaces}). We say that a cubespace $(X,{C_{G}^{\bullet}})$ is \textbf{ergodic}, if $C^{1}(X)=X^{[1]} = X \times X$, that is to say, if any pair of elements forms
a $1$-cube.

Let $X$ be a cubespace and let $f:X\to\{0,1\}_{*}^{d}$ be a map.
We call $f$ a d-\textbf{corner} if $f|_{\{\o\in\{0,1\}^{d+1}|\ \o_{i}=0\}}$
is a $(d-1)$-cube for all $1\le i\le d$. We say that the cubspace$(X,{C^{\bullet}})$
has \textbf{$d$-completion} if for any $d$-corner $f$, there is a cube $c\in C^{[d]}(X)$ such that $c|_{\{0,1\}_{*}^{d}}=f$.
We say that $(X,{C_{G}^{\bullet}})$ is \textbf{fibrant} if it has
$d$-completion for all $d\ge1$.

\begin{example}\label{ex:weakly mixing}
Recall that $(G,X)$ is called \textbf{transitive}, if for every pair of
non-empty open subsets $U$ and $V$, there is $g\in G$ such that $U\cap gV\not=\emptyset$; is called \textbf{weakly mixing} if the diagonal action $(\Del^{2}(G),X)$ is transitive; and is called \textbf{transitive of all orders} if the diagonal action $(\Del^{n}(G),X)$ is transitive for all $n\in \mathbb{N}$. An example of a t.d.s which is fibrant is given by a minimal system which is transitive of all orders\footnote{For $G$ abelian transitivity of all orders is equivalent to weak mixing (\cite[Theorem 1.11]{G03}). For $G$ non-abelian the conditions are not equivalent (\cite[p. 277]{weiss2000survey}). If $(G,X)$ is minimal and admits an invariant measure with full support with respect to which it is measurably weakly mixing then it is transitive of all orders (\cite[Theorem 6.12]{akin2008topological})}. See Proposition \ref{prop:weakly mixing}.
\end{example}

 We say that $(X,{C^{\bullet}})$
has \textbf{$d$-uniqueness} if the following holds: whenever $c,c'\in C^{[d]}(X)$
and $c(\omega)=c'(\omega)$ for all $\omega\in\{0,1\}_{*}^{d}$ then
$c=c'$.

We say that a cubespace $(X,{C_{G}^{\bullet}})$ is a \textbf{nilspace
of order $d$ }if it is fibrant and $d\ge0$ is the smallest integer
such that $X$ has $(d+1)$-uniqueness.

Let $X$ be a cubespace and let $\sim$ be a closed equivalence relation
on $X$. One endows $X/{\sim}$ by a cubespace structure by declaring
a configuration $c\in(X/{\sim})^{[d]}$ a cube if and only if there
is a cube $c'\in C^{[d]}(X)$ such that $\pi(c')=c$. It is clear
that $X/{\sim}$ is indeed a cubespace.

Let $X$ be a fibrant cubespace. Define $x\sim_{d}y$ if and only
if there are two cubes $c_{1},c_{2}\in C^{[d+1]}(X)$ such that $c_{1}(\o)=c_{2}(\o)$
for $\o\neq\vec{1}$ and $c_{1}(\vec{1})=x$ and $c_{2}(\vec{1})=y$.
Denote $\pi_{d}:X\rightarrow X/{\sim}_{d}$. By \cite[Proposition 6.3]{GMVI}
(following \cite[Section 2.4]{CS12} and \cite[Section 3.3]{HK08})
$\sim_{d}$ is an equivalence relation and $\pi_{d}(X)$ is a nilspace.
We call $X/{\sim}_{d}$ the \textbf{$d$-th canonical factor} of $X$.
The following remark is trivial:
\begin{rem}
\label{rem:uniqueness}Let $d\geq0$. A cubespace $X$ has $(d+1)-$uniqueness
iff $\sim_{d}=\Delta$.
\end{rem}
The relation between successive canonical factors is elucidated by
the so-called weak structure theorem proven by Antol\'\i n Camarena and Szegedy
in \cite[Theorem 1]{CS12}. A detailed exposition is given in \cite[Chapters 6 \& 7]{GMVI}.
We quote a partial version of the theorem:
\begin{thm}
\label{th:weak-structure} Let $X$ be an ergodic nilspace of order
at most $d$. Then there is an additive compact abelian group $A_{d}$
acting continuously and freely on $X$ such that the orbits of $A_{d}$
coincide with the fibres of $\pi_{d-1}:X\to X/{\sim}_{d-1}$.
\end{thm}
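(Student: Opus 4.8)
The plan is to realise $A_d$ as a group of symmetries of the cubespace $X$ and to match its orbits with the classes of $\sim_{d-1}$. Since $X$ is ergodic, $C^{[1]}(X)=X\times X$, so $\sim_0$ is the full relation, $X/{\sim}_0$ is a point, and one has the tower $\mathrm{pt}=X/{\sim}_0 \leftarrow X/{\sim}_1 \leftarrow \cdots \leftarrow X/{\sim}_{d-1} \leftarrow X/{\sim}_d=X$. I would proceed by induction on $d$, the inductive hypothesis describing the lower part of the tower as an iterated principal abelian extension and in particular yielding that each $\pi_k$ is a fibrant cubespace morphism and that $X/{\sim}_{d-1}$ is again an ergodic nilspace, of order at most $d-1$. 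It then suffices to analyse the single extension $\pi_{d-1}\colon X\to X/{\sim}_{d-1}$, whose fibres are exactly the $\sim_{d-1}$-classes.

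For a pair $x,x'\in X$ with $x\sim_{d-1}x'$ I would define a self-map $\tau_{x,x'}$ of $X$ by \emph{transport along cubes}: given $y\in X$, assemble from $x$, $x'$, $y$ and constant vertices a corner of dimension $d+1$, complete it to a cube $c\in C^{[d+1]}(X)$ using fibrancy, and set $\tau_{x,x'}(y)$ to be the value of $c$ at the distinguished vertex $\vec 1$. The crux is that $\tau_{x,x'}(y)$ is independent of the chosen completion; this is proved by a gluing argument in the spirit of the tricubes of \cite{CS12}, in which two candidate completions are glued along their common corner into a single cube of one higher dimension whose relevant vertex is determined by the uniqueness property of $X$ together with its fibrancy, forcing the two values to coincide. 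I expect this well-definedness to be the main obstacle. Once it is established, the same gluing technology shows that $\tau_{x,x'}$ is a homeomorphism of $X$, is an automorphism of the cubespace, satisfies $\pi_{d-1}\circ\tau_{x,x'}=\pi_{d-1}$ and $\tau_{x,x'}(x)=x'$, obeys the composition law $\tau_{x',x''}\circ\tau_{x,x'}=\tau_{x,x''}$, depends only on the \emph{difference} of $x$ and $x'$ in the sense that $\tau_{x,x'}=\tau_{y,y'}$ whenever $x$, $x'$, $y$, $y'$ occur as the vertices of a suitable cube, and in particular that all such maps commute.

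Finally I would let $A_d$ be the closure, in $\Homeo(X)$ equipped with the topology of uniform convergence, of $\{\tau_{x,x'}\colon x\sim_{d-1}x'\}$. By the previous paragraph $A_d$ is an abelian subgroup of $\Homeo(X)$ consisting of cubespace automorphisms lying over $\id_{X/{\sim}_{d-1}}$; it is compact because the inductive description exhibits the extension $\pi_{d-1}$ as isometric, so that this group is equicontinuous (equivalently, $A_d$ is the compact abelian structure group of the top principal bundle in the tower). The action of $A_d$ on $X$ is continuous and free: the difference carried by $\tau_{x,x'}$ is detected at every point, so an element fixing one point is the identity, and this passes to limits. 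Since $\tau_{x,x'}(x)=x'$ for all $x\sim_{d-1}x'$, the action is transitive on each fibre of $\pi_{d-1}$; as it also preserves these fibres, the $A_d$-orbits are precisely the fibres of $\pi_{d-1}$. Writing the action additively exhibits $A_d$ as the desired additive compact abelian group.
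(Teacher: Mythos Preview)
The paper does not prove this theorem: it is explicitly quoted from the literature, with the sentence preceding the statement reading ``The relation between successive canonical factors is elucidated by the so-called weak structure theorem proven by Antol\'\i n Camarena and Szegedy in \cite[Theorem 1]{CS12}. A detailed exposition is given in \cite[Chapters 6 \& 7]{GMVI}. We quote a partial version of the theorem.'' So there is no in-paper proof to compare against.

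That said, your outline is essentially the argument carried out in those references. The translations $\tau_{x,x'}$ are defined exactly as you describe (transport along completed corners), and the well-definedness step is indeed the heart of the matter, handled in \cite{CS12,GMVI} via the tricube construction you allude to. The composition, commutativity, and fibre-preservation properties all follow from further gluing arguments of the same flavour. One point to be careful with: your justification for compactness (``the extension is isometric, hence equicontinuous'') is morally right but the actual argument in \cite{GMVI} is more structural---one shows directly that the set of translations is already closed and acts simply transitively on each fibre, identifying $A_d$ with a fibre (which is compact) rather than appealing to an isometric structure that has not yet been built at this stage. Also, your inductive hypothesis is slightly stronger than needed: the analysis of the top extension $\pi_{d-1}$ does not require knowing that the lower factors are already principal abelian extensions, only that $X$ itself is fibrant with $(d+1)$-uniqueness. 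Otherwise the sketch is sound.
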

Iterating the theorem we see that a nilspace of finite order can be
represented by a finite tower of compact abelian group extensions:

\begin{equation}
X\to\pi_{d-1}(X)\to\pi_{d-2}(X)\to\ldots\to\pi_{0}(X)=\bullet\label{eq:non dyn tower}
\end{equation}

In Subsection \ref{sub:Weak-structure-theorem} we will adapt this
theorem to the dynamical context.

\subsection{Minimal distal systems are fibrant\label{sub:Distal-systems-are}}
\begin{thm}
\label{thm:completion for distal} Let $(G,X)$ be a minimal distal
topological dynamical system, then the cubespace $(X,{C_{G}^{\bullet}})$
is ergodic and fibrant.
\end{thm}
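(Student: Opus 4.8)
The plan is to prove the two assertions separately: ergodicity ($C_G^{[1]}(X)=X\times X$) and $d$-completion for every $d\ge 1$. For ergodicity, recall that $C_G^{[1]}(X)=\overline{\{g x^{[1]}\mid g\in\Hcal^{[1]},\,x\in X\}}$ where $\Hcal^{[1]}$ is generated by $[G]_{\{1\}}$ and $\Del^{[1]}(G)$; in particular $(\id,h)$ lies in $\Hcal^{[1]}$ for every $h\in G$, so $\overline{\Hcal^{[1]}(x,x)}$ contains $\overline{\{(x,hx)\mid h\in G\}}=\{x\}\times X$ by minimality. Letting $x$ range over $X$ gives $C_G^{[1]}(X)=X\times X$, which is exactly ergodicity. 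Since $X$ is distal, one also has $\Del^{[1]}(X)\subset C_G^{[1]}(X)$ trivially, so nothing more is needed here.

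For completion, fix $d\ge 1$ and a $d$-corner $f\colon\{0,1\}^d_*\to X$, i.e.\ a map whose restriction to each lower hyperface $\{\o:\o_i=0\}$ is a $(d-1)$-cube in $C_G^{[d-1]}(X)$. I want to produce $c\in C_G^{[d]}(X)$ with $c|_{\{0,1\}^d_*}=f$. The strategy is to pass to the enveloping semigroup of the action of $\Fcal^{[d]}$ (or $\Hcal^{[d]}$) on the appropriate cube system and exploit that a distal minimal system is \emph{semisimple} — it decomposes as a disjoint union of minimal subsystems (\cite[Corollary 5.4(iii)]{A}), together with the fact, already proved in the excerpt, that $(G^{[d]},X^{[d]})$ is distal (so $(\Fcal^{[d]},C_x^{[d]}(X))$ is distal and semisimple) and that $Y_x^{[d]}(X)=C_x^{[d]}(X)$ for distal minimal systems (the theorem just before the Remark about \cite[Example 3.6]{tu2013dynamical}). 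Concretely: the corner $f$ determines a point $\hat f\in X_*^{[d]}$; I first locate this point inside $C_{*}^{[d]}$-type data by using the $(d-1)$-cube structure of the faces. Since each lower face value lies in $C_G^{[d-1]}(X)$ and these agree on lower faces of codimension $\ge 2$, the corner is consistent, and the floor/ceiling product decomposition $X^{[d]}=X^{[d-1]}\times X^{[d-1]}$ lets me view $f$ as gluing two compatible $(d-1)$-dimensional pieces along a common $(d-2)$-boundary.

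The heart of the argument is: take the point $x_0 := f(\vec 0{\cdots}01)$ or more convenient a base point, form the $\Fcal^{[d]}$-orbit closure inside $C_G^{[d]}(X)$ of the constant cube over an appropriate vertex, and use Proposition~\ref{prop:Ellis fundamentals}(2) together with a minimal idempotent $u$ with $ux_0=x_0$ to realize the corner as a limit $\lim g_i (x_0)^{[d]}$; distality forces the orbit closure to be minimal (semisimplicity), so by Theorem~\ref{thm:unique minimal} it equals $Y_{x_0}^{[d]}(X)=C_{x_0}^{[d]}(X)$, which projects onto all corners over $x_0$. More precisely, I would induct on $d$: the case $d=1$ is ergodicity; assuming $(d-1)$-completion, given a $d$-corner $f$, its restrictions to the two hyperfaces $\{\o_d=0\}$ and — where defined — the remaining lower faces are $(d-1)$-cubes; fill in using the induction hypothesis and the semisimple decomposition to obtain compatible $(d-1)$-cubes $c_f$ (floor) and a partial ceiling, then apply Corollary~\ref{cor:approaching constant} (every cube in $C_x^{[d-1]}$ has the constant cube in its $\Fcal^{[d-1]}$-orbit closure) repeatedly, exactly as in the doubling constructions of Theorem~\ref{thm:lifting}, to slide the known faces into constant position and read off the missing $\vec 1$-vertex. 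Distality is what guarantees no ``collision'': because the system is distal the relevant orbit closures are minimal, so the filling cube genuinely restricts to the prescribed corner rather than to a proximal perturbation of it.

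The main obstacle I anticipate is the bookkeeping of \emph{which} faces of the corner are prescribed versus free and ensuring the inductively constructed fillings on overlapping lower faces are mutually compatible before invoking uniqueness/minimality — this is the same technical difficulty that makes the proof of Theorem~\ref{thm:lifting} lengthy (the nested orbit-closure inclusions and the doubling maps $D_i(\cdot)$). I expect the clean statement to be: use the semisimplicity of $(\Fcal^{[d]},C_x^{[d]}(X))$ to guarantee that any partially-prescribed corner extends, because the unique minimal subsystem $Y_x^{[d]}(X)$ already surjects (via the face projections) onto the space of corners, by Theorem~\ref{thm:unique minimal} combined with $Y_x^{[d]}(X)=C_x^{[d]}(X)$; the corner-completion then follows by a straightforward compactness/limit argument. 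Everything else — ergodicity, the base case, the reduction — is routine given the earlier results.
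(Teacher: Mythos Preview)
Your treatment of ergodicity is fine. The gap is in the completion argument. Your final reduction --- ``$Y_x^{[d]}(X)=C_x^{[d]}(X)$ surjects via the face projections onto the space of corners'' --- is precisely the statement to be proved: knowing that $C_x^{[d]}(X)$ is $\Fcal^{[d]}$-minimal says nothing a priori about which configurations on $\{0,1\}^d_*$ arise as restrictions of cubes. Likewise, the induction on $d$ you sketch does not close. Given a $d$-corner $f$, the floor $c_f=f|_{\{\o_d=0\}}$ is a $(d-1)$-cube and the partial ceiling $f|_{\{\o_d=1\}\setminus\{\vec 1\}}$ is a $(d-1)$-corner which, by the inductive hypothesis, extends to some $(d-1)$-cube $c_c$; but there is no reason whatsoever for the pair $(c_f,c_c)$ to lie in $C_G^{[d]}(X)$, and semisimplicity of $(\Fcal^{[d]},C_x^{[d]}(X))$ does not supply one. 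The ``slide faces to constant position'' idea you mention is in the right spirit, but you have not said what to do once they are constant, nor how to return to the original corner afterwards.

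The paper's proof (following \cite[Section 4.2]{HKM10}) repairs both problems by strengthening the inductive statement: one shows that for \emph{every} downward-closed $V\subseteq\{0,1\}^d$, every element of $\Hom(V,X)$ (maps whose restriction to each full sub-cube contained in $V$ is a cube) extends to an element of $C_G^{[d]}(X)$. The outer induction is on $d$, the inner on $|V|$. In the inductive step one removes a maximal $\tilde\o\neq\vec 1$ from $V$ to get $W$. One first handles the special case $\alpha|_W\equiv x$: a lower hyperface $F\ni\tilde\o$ lets the outer induction extend $\alpha|_{V\cap F}$ to a $(d-1)$-cube on $F$, and doubling that cube along $F$ yields the required $d$-cube. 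A general $\alpha$ is reduced to this case by first extending $\alpha|_W$ (inner induction), then using minimality of $(\Hcal^{[d]},C_G^{[d]}(X))$ to push $\alpha$ along $\Hcal^{[d]}$ to a limit $\alpha'$ with $\alpha'|_W\equiv x$; distality of $(\Hcal^{[d]},\Hom(V,X))$, inherited from distality of $(G^{[d]},X^{[d]})$, is exactly what lets one pull the extension of $\alpha'$ back to an extension of $\alpha$. The strengthening to arbitrary downward-closed $V$ --- rather than only $V=\{0,1\}^d_*$ --- is the missing idea that makes the induction go through.
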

The fact that $(X,{C_{G}^{\bullet}})$ is ergodic follows trivially
from minimality of $(G,X)$. The proof that $(X,{C_{G}^{\bullet}})$
is fibrant splits into a number of lemmas, which are based on \cite[Section 4.2]{HKM10}.

In this subsection we will identify $\{0,1\}^{d}$ with the collection
of all subsets of $\{1,\ldots,d\}$ and write $\o'\subseteq\o$ for
$\o',\o\in\{0,1\}^{d}$ if $\o'(i)\le\o(i)$ for all $i$.

Let $V\subseteq\{0,1\}^{d}$ be a \textbf{downwards-closed} subset,
i.e.~if $\o\in V$ and $\o'\subseteq\o$ then $\o'\in V$. Denote
by $\Hom(V,X)$ the set of maps $\a:V\to X$ such that for all $\o\in V$,
$\a|_{\{\o'|\ \o'\subseteq\o\}}$ is a cube of $X$.
\begin{lem}
\label{lem:distality of action on Hom} Let $(G,X)$ be a distal t.d.s
and $V\subseteq\{0,1\}^{d}$ a downwards-closed subset. Then $(\Hcal^{[d]},\Hom(V,X))$
equipped with the coordinate-wise action is a distal system.\end{lem}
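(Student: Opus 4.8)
The plan is to realize $\Hom(V,X)$ as a closed invariant subsystem of a product of copies of $(G^{[d]},X^{[d]})$-type systems, all of which are distal, and then invoke the standard permanence properties of distality (subsystems and products of distal systems are distal). First I would recall from \cite[Theorem 5.6]{A} that if $(G,X)$ is distal then the product system $(G^{[d]},X^{[d]})$ with the coordinate-wise action is distal; restricting the acting group to the subgroup $\Hcal^{[d]}\subset G^{[d]}$ only shrinks the group, so $(\Hcal^{[d]},X^{[d]})$ is still distal, since distality is a property of the orbit structure that can only improve when passing to a subgroup of the acting group. Now $\Hom(V,X)$ sits inside $X^{V}$, which we may regard as the restriction of $X^{[d]}$ to the coordinates in $V$; the coordinate-wise $\Hcal^{[d]}$-action on $X^{[d]}$ descends to $X^{V}$ (an $\Hcal^{[d]}$-factor, hence distal), and I would check that $\Hom(V,X)$ is a \emph{closed} $\Hcal^{[d]}$-invariant subset of $X^{V}$.

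The two points to verify here are closedness and invariance. Closedness is immediate: $\a\in\Hom(V,X)$ iff for every $\o\in V$ the restriction $\a|_{\{\o'\mid\o'\subseteq\o\}}$ lies in $C_{G}^{[|\o|]}(X)$ (after the obvious identification of the downset of $\o$ with a discrete cube of dimension $|\o|$), and each $C_{G}^{[k]}(X)$ is closed by definition in \eqref{eq:def of Cn H}; so $\Hom(V,X)$ is an intersection of finitely many closed conditions. Invariance requires that applying $g\in\Hcal^{[d]}$ coordinate-wise to $\a\in\Hom(V,X)$ again lands in $\Hom(V,X)$: for each $\o\in V$, the downset of $\o$ is itself (isomorphic to) a discrete cube, and the relevant sub-collection of coordinates of $\Hcal^{[d]}$, when restricted to that downset, forms a subgroup of $\Hcal^{[|\o|]}$ (this is essentially the same computation as in Lemma \ref{lem:induced group projections} and the cube-invariance of dynamical cubespaces, Proposition \ref{prop:dynamical cubespaces are cubespaces}), so $C_{G}^{[|\o|]}(X)$ is preserved. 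Hence $(\a|_{\text{downset of }\o})$ stays a cube, and $g\a\in\Hom(V,X)$.

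Finally, since $(\Hcal^{[d]},X^{V})$ is distal (as an $\Hcal^{[d]}$-factor of the distal system $(\Hcal^{[d]},X^{[d]})$, using that factors of distal systems are distal, cf.\ \cite[Corollary 5.4]{A}) and $\Hom(V,X)$ is a closed invariant subset, the subsystem $(\Hcal^{[d]},\Hom(V,X))$ is distal. The main obstacle I anticipate is purely bookkeeping: setting up the identification of a downward-closed $V$ with (a union of) discrete cubes carefully enough that the phrase ``restriction of the $\Hcal^{[d]}$-action'' is rigorous, i.e.\ confirming that the group elements of $\Hcal^{[d]}$ act \emph{compatibly} across the overlapping downsets $\{\o'\mid\o'\subseteq\o\}$ for different $\o\in V$. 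This is where one must be slightly attentive, but it follows from the fact that $\Hcal^{[d]}$ acts coordinate-wise and each hyperface generator of $\Hcal^{[d]}$ restricts to a hyperface generator (or the identity, or a diagonal element) on each such downset; no genuinely new idea is needed beyond what already appears in Section \ref{sec:minimal-subsystems-for}.
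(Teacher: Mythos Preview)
Your approach is correct and essentially identical to the paper's: both invoke \cite[Theorem 5.6]{A} to get distality of $(G^{[d]},X^{[d]})$, restrict to the subgroup $\Hcal^{[d]}$, and then pass to the subsystem $\Hom(V,X)$. The paper's proof is a two-line version of yours that does not spell out the closedness and $\Hcal^{[d]}$-invariance of $\Hom(V,X)$ (it simply writes ``$\Hom(V,X)\subset X^{[d]}$''), whereas you route through the factor $X^{V}$ and verify these points explicitly; your extra bookkeeping is sound but not strictly necessary for the distality conclusion, since distality of the ambient product already forces any two distinct configurations to stay apart under the coordinate-wise action.
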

\begin{proof}
By \cite[Chapter 5, Theorem 6]{A} $(G^{[d]},X^{[d]})$ is a distal
system. As $\Hom(V,X)\subset X^{[d]}$ this immediately implies that
$(\Hcal^{[d]},\Hom(V,X))$ is a distal system.
\end{proof}
In particular, for $\a_{1},\a_{2}\in\Hom(V,X)$, we have $\a_{1}\in\overline{\Or(\a_{2},\Hcal^{[d]})}$
if and only if $\a_{2}\in\overline{\Or(\a_{1},\Hcal^{[d]})}$.

Let $V\subseteq\{0,1\}^{d}$ be a downwards-closed subset. We say
that $\Hom(V,X)$ has the \textbf{extension property} if for every
$\a\in\Hom(V,X)$, there exists $c\in C_{G}^{[d]}(X)$ so that $c|_{V}=\a$.
Note that a cubespace $(X,C^{\bullet}(X))$ has $d$-completion if
and only if $\Hom(\{0,1\}_{*}^{d},X)$ has the extension property.
Therefore Theorem \ref{thm:completion for distal} follows from the
next lemma.
\begin{lem}
\label{lem:distal has extension property} Let $(G,X)$ be a minimal
distal t.d.s and let $V\subseteq\{0,1\}^{d}$ be a downwards-closed
subset, then $\Hom(V,X)$ has the extension property.\end{lem}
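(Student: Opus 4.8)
The plan is to induct on the size of the downwards-closed set $V \subseteq \{0,1\}^d$. The base case $V = \{\vec 0\}$ is trivial: any $\a\colon\{\vec 0\}\to X$ extends to the constant configuration $\a(\vec 0)^{[d]} \in C_G^{[d]}(X)$. For the inductive step, pick a maximal element $\o_0 \in V$ with respect to $\subseteq$ (so that $V' = V \setminus \{\o_0\}$ is still downwards-closed), and let $\a \in \Hom(V,X)$ be given. By the induction hypothesis applied to $V'$ and $\a|_{V'}$, there is a cube $c' \in C_G^{[d]}(X)$ with $c'|_{V'} = \a|_{V'}$. The goal is to correct $c'$ on the single remaining coordinate $\o_0$ so as to make it agree with $\a(\o_0)$ while leaving the values on $V'$ untouched.

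The key device, as in \cite[Section 4.2]{HKM10}, is to use the face group to ``rotate'' the $\o_0$-coordinate independently of the others. Let $F$ be the smallest face of $\{0,1\}^d$ containing $\o_0$; since $\o_0$ is maximal in $V$, the only element of $F$ lying in $V$ is $\o_0$ itself — concretely, if $\o_0$ corresponds to the subset $S \subseteq \{1,\dots,d\}$, then $F = \{\o : \o \supseteq \o_0\}$, and downward-closedness forces $F \cap V = \{\o_0\}$. Now observe that $\a(\o_0)$ and $c'(\o_0)$ are regionally proximal in a strong sense: the restriction $\a|_{\{\o : \o \subseteq \o_0\}}$ and $c'|_{\{\o : \o\subseteq\o_0\}}$ are both cubes of $X$ with the same values on $\{\o : \o \subsetneq \o_0\}$, so by $\Hcal$-minimality of $C_G^{[\cdot]}(X)$ (Proposition \ref{prop:minimality}) together with distality (Lemma \ref{lem:distality of action on Hom}, giving that $\overline{\Or(\cdot,\Hcal)}$-membership is symmetric), one can find a net $g_n$ in the appropriate sub-face-group that fixes the values on $\{\o : \o \subsetneq \o_0\}$ and sends $c'(\o_0)$ to $\a(\o_0)$. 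Applying the image of this net in $[G]_F \subseteq \Fcal^{[d]}$ to $c'$ changes only the coordinates in $F$; passing to a subnet so that $[g_n]_F c'$ converges, we obtain $c'' \in C_G^{[d]}(X)$ with $c''|_{\{\o:\o\subsetneq\o_0\}} = c'|_{\{\o:\o\subsetneq\o_0\}}$ unchanged and $c''(\o_0) = \a(\o_0)$. Since $F \cap V = \{\o_0\}$, all the other coordinates of $V$ are left untouched, so $c''|_V = \a$, completing the induction.

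The main obstacle is making precise the claim that the relevant rotation can be realized inside $[G]_F$ without disturbing the coordinates $\o \subsetneq \o_0$. This is exactly where distality is essential: one needs that a face-group element taking the lower-dimensional sub-cube $c'|_{\{\o : \o \subseteq \o_0\}}$ to $\a|_{\{\o : \o \subseteq \o_0\}}$ exists, which follows from minimality of the cube system on $\{0,1\}^{|\o_0|}$ applied to the trace of $[G]_F$ — but the identification of $[G]_F$ acting on the $F$-coordinates with a face/Host–Kra group acting on a lower-dimensional cube, and the verification that ``fixing the boundary'' is compatible with this, requires care with the combinatorics of faces. One must also check at each stage that the configuration produced genuinely lies in $C_G^{[d]}(X)$ and not merely in $X^{[d]}$; this is automatic because we only ever act by elements of $\Fcal^{[d]} \subseteq \Hcal^{[d]}$ and take closures, so $C_G^{[d]}(X)$ is preserved. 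Once these points are set up, the argument is a routine finite induction mirroring \cite[Section 4.2]{HKM10}, now valid for arbitrary $G$ since all the inputs (Proposition \ref{prop:minimality}, Lemma \ref{lem:distality of action on Hom}) have been established in that generality.
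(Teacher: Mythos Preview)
Your plan has a genuine gap at the key step. You assert that $[G]_F \subseteq \Fcal^{[d]}$ where $F = \{\o : \o \supseteq \o_0\}$ is the smallest upper face containing $\o_0$, and then apply $[g_n]_F$ to $c'$ to correct the $\o_0$-coordinate. But $[G]_F$ is \emph{not} contained in $\Fcal^{[d]}$ (nor even in $\Hcal^{[d]}$) whenever $\codim(F) = |\o_0| \ge 2$. For a concrete check, take $G$ abelian, $d=2$, $\o_0 = (1,1)$, so $F = \{(1,1)\}$: any element of $\Hcal^{[2]}$ has the form $(h_1+h_3,\, h_1+h_4,\, h_2+h_3,\, h_2+h_4)$, and setting this equal to $(0,0,0,g)$ forces $g=0$. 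Thus $[g]_{\{\vec 1\}} \notin \Hcal^{[2]}$ for $g \ne \Id$. In general the lower-central-series representation (Subsection~\ref{sub:Lower-central-series}) gives only $[G_{\codim(F)}]_F \subseteq \Hcal^{[d]}$, not $[G]_F$. Consequently, applying $[g_n]_F$ to $c'$ need not keep you inside $C_G^{[d]}(X)$, and the correction step fails. The obstacle you flag (``the identification of $[G]_F$ acting on the $F$-coordinates with a face/Host--Kra group'') is not a combinatorial technicality but the heart of the matter: there simply are not enough elements of $\Hcal^{[d]}$ that move the $\o_0$-coordinate freely while fixing all of $V'$.

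The paper's proof gets around this with a different architecture: a \emph{double} induction, first on $d$ and then on $|V|$. Given a maximal $\wt\o \in V$ with $\wt\o \ne \vec 1$, one first reduces to the special case where $\a|_{V\setminus\{\wt\o\}}$ is constant equal to some $x$; this reduction uses $\Hcal^{[d]}$-minimality of $C_G^{[d]}(X)$ to push the already-found extension of $\a|_{V\setminus\{\wt\o\}}$ toward $x^{[d]}$, and then distality (symmetry of orbit closures on $\Hom(V,X)$, Lemma~\ref{lem:distality of action on Hom}) to return. In the constant case one picks a coordinate $i$ with $\wt\o_i = 0$, applies the induction hypothesis in dimension $d-1$ to extend $\a$ restricted to the lower hyperface $\{\o_i = 0\}$, and then \emph{doubles} along direction $i$ to obtain a $d$-cube. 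The doubling operation is what replaces your illegitimate $[G]_F$-move: it stays inside $C_G^{[d]}(X)$ by construction, at the cost of requiring the outer induction on $d$.
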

\begin{proof}
We prove the lemma by a double induction; first we induct on $d$,
then on the cardinality of $V$. If $d=1$, the claim is clear. We
assume that the claim holds for downward-closed subsets in $\{0,1\}^{d-1}$
and prove it for downward-closed subsets in $\{0,1\}^{d}$.

Let $V$ be a downward-closed subset in $\{0,1\}^{d}$. If $V=\{\vec{0}\}$,
the result is clear. Assume $|V|\geq2$ and $V\neq\{0,1\}^{d}$. Let
$\vec{1}\neq\wt\o\in V$ be a maximal element in $V$ and denote $W=V\setminus\{\wt\o\}$.
(Note that $W\neq\varnothing$.)

Let $\a\in\Hom(V,X)$. We first consider the special case that $\a|_{W}\equiv x$
for some $x\in X$. We show that $\a$ can be extended to a cube.
Let $1\le i\le d$ be such that $\wt\o_{i}=0$ and define $F=\{\o\in\{0,1\}^{d}|\ \o_{i}=0\}$
and $E=\{\o\in\{0,1\}^{d}|\ \o=1\}$ .

By the inductive assumption, $\a|_{V\cap F}$ can be extended to a
map $c_{1}:F\to X$ that is a cube. Let $c_{2}=D_{i}(c_{1}).$ By
Subsection \ref{sub:doubling} $c_{2}\in C_{G}^{[d]}(X)$.

We show that $c_{2}$ is an extension of $\a$. This is clearly true
on $F\cap V$. Let $\o\in E\cap V$. As $\tau(\o)\subseteq\o$ and
$V$ is a downward-closed subset, we must have $\tau(\o)\in V$. Moreover,
$\o\neq\wt\o$ as $\wt\o$ is maximal in $V$. Thus
\[
c_{2}(\o)=c_{1}\circ\tau(\o)=\a\circ\tau(\o)=x=\a(\o).
\]

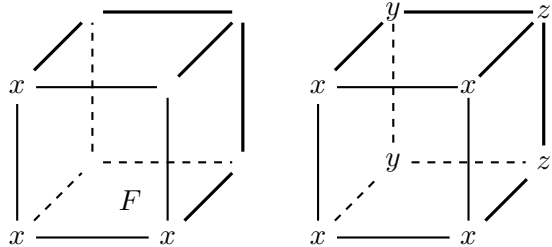
\begin{figure}[h]
\centering{}\begin{tikzpicture}[thick,scale=2]
    \node (2A1) at (0, 0){$x$};
   \node (2A2) at (0, 1){$x$};
 \node (2A3) at (1, 1){$$};
\node (2A4) at (1, 0){$x$};
   \node (2B1) at (0.5, 0.5){$$};
 \node (2B2) at (0.5, 1.5){$$};
\node (2B3) at (1.5, 1.5){$$};
  \node (2B4) at (1.5, 0.5){$$};
 \node (F) at (0.75, 0.25){$F$};
 \draw (2A1) -- (2A2);
  \draw (2A2) -- (2A3);
  \draw (2A3) -- (2A4);
  \draw (2A4) -- (2A1);
 \draw[dashed] (2A1) -- (2B1);
   \draw[dashed] (2B1) -- (2B2);
 \draw[very thick] (2A2) -- (2B2);
 \draw[very thick] (2B2) -- (2B3);
  \draw[very thick] (2A3) -- (2B3);
  \draw[very thick] (2A4) -- (2B4);
  \draw[very thick] (2B4) -- (2B3);
 \draw[dashed] (2B1) -- (2B4);
 \node (2B42) at (1.5, 0.5){$$};
   \node (A1) at (2, 0){$x$};
   \node (A2) at (2, 1){$x$};
  \node (A3) at (3, 1){$$};
 \node (A4) at (3, 0){$x$};
   \node (B1) at (2.5, 0.5){$y$};
 \node (B2) at (2.5, 1.5){$$};
  \node (B3) at (3.5, 1.5){$$};
  \node (B4) at (3.5, 0.5){$z$};
   \draw (A1) -- (A2);
 \draw (A2) -- (A3);
\draw (A3) -- (A4);
 \draw (A4) -- (A1);
    \draw[dashed] (A1) -- (B1);
  \draw[dashed] (B1) -- (B2);
 \draw[very thick] (A2) -- (B2);
 \draw[very thick] (B2) -- (B3);
 \draw[very thick] (A3) -- (B3);
 \draw[very thick] (A4) -- (B4);
  \draw[very thick] (B4) -- (B3);
   \draw[dashed] (B1) -- (B4);
    \node (A32) at (3, 1){$x$};
   \node (B22) at (2.5, 1.5){$y$};
   \node (B32) at (3.5, 1.5){$z$};
   \end{tikzpicture} \caption{An example of $\alpha$ and $c_{2}$.}
\end{figure}

\noindent We now return to the general case. By the inductive assumption,
$\a|_{W}$ can be extended to a cube $c_{1}\in C_{G}^{[d]}(X)$. By
Proposition \ref{prop:minimality}, $(\Hcal^{[d]},C_{G}^{[d]}(X))$
is minimal. Therefore, we can find a sequence $h_{i}\in\Hcal^{[d]}$
such that $\lim_{i}h_{i}c_{1}=x^{[d]}$.

Let $\a'=\lim_{i}h_{i}.\a$ (we can assume without loss of generality
that the limit exists). By Lemma \ref{lem:distality of action on Hom},
$\Hom(V,X)$ is invariant under the action of $\Hcal^{[d]}$, and
therefore $\a'\in\Hom(V,X)$. As we have $\a'|_{W}\equiv x$ in addition,
we may conclude by the previous case that $\a'$ can be extended to
a cube $c_{2}$.

Using Lemma \ref{lem:distality of action on Hom}, we can find a sequence
$g_{i}\in\Hcal^{[d]}$ such that $\a=\lim_{i}g_{i}.\a'$. We conclude
that $\lim_{i}g_{i}.c_{2}$ is an extension of $\a$ (again we can assume
without loss of generality that the limit exists).
\end{proof}
We are now ready to prove that minimal systems of finite order are
nilspaces. The key observation is that the canonical equivalence relation
$\sim_{s}$ has the following alternative definition:
\begin{prop}
\label{prop:alter-canonical} Let $X$ be a fibrant cubespace and
$d\geq1$ , then $x\sim_{d}y$ if and only if $\llcorner_{d+1}(x,y)$
is a cube.
\end{prop}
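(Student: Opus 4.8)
The plan is to prove the two implications separately; the implication ``$\llcorner^{[d+1]}(x,y)\in C^{[d+1]}(X)\ \Rightarrow\ x\sim_d y$'' is immediate, and the converse is where the work lies.

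\textbf{The easy implication.} Every constant configuration is a cube, since it is the composite of a $0$-cube with a morphism of discrete cubes all of whose coordinate functions are identically $0$ (equivalently $x^{[d+1]}=\llcorner^{[d+1]}(x,y)\circ f$ for such an $f$, using $f(\vec 1)=\vec 0$). So if $\llcorner^{[d+1]}(x,y)\in C^{[d+1]}(X)$, then $c_1:=x^{[d+1]}$ and $c_2:=\llcorner^{[d+1]}(x,y)$ both belong to $C^{[d+1]}(X)$, agree on $\{0,1\}^{d+1}\setminus\{\vec 1\}$, and take the values $x$ and $y$ at $\vec 1$; hence $x\sim_d y$. (By cube invariance under signed permutations of coordinates, which act transitively on the vertices, the same argument works with any vertex in place of $\vec 1$.)

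\textbf{The main implication.} Suppose $c_1,c_2\in C^{[d+1]}(X)$ agree on $\{0,1\}^{d+1}\setminus\{\vec 1\}$, with $c_1(\vec 1)=x$, $c_2(\vec 1)=y$; write $\beta$ for their common restriction, which is a corner. Since $\llcorner^{[d+1]}(x,y)$ restricts to the constant corner $x^{[\cdot]}$ on $\{0,1\}^{d+1}\setminus\{\vec 1\}$, and $c_1$ is a completion of $\beta$ with apex $x$, the statement is equivalent to: the set of apex values realised by completions of $\beta$ coincides with the $\sim_d$-class of $c_1(\vec 1)$. The inclusion $\subseteq$ is exactly the easy implication applied to $x^{[d+1]}$ and a hypothetical completion. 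For $\supseteq$ I would transport completions by fibrancy: build a $(d+2)$-cube $\Psi$ whose facet $\{\omega_{d+2}=0\}$ is $c_2$ and whose facets $\{\omega_j=0\}$ $(1\le j\le d+1)$ are $(d+1)$-cubes interpolating between the faces $\beta|_{\{\omega_j=0\}}=c_1|_{\{\omega_j=0\}}$ of $c_1$ and the constant cubes $x^{[\cdot]}$ --- such interpolating cubes being produced by a downward induction on dimension from $(k+1)$-completion, the bottom step being the genuine $1$-cubes $\big(c_1(\omega'),c_1(\vec 1)\big)$ (which are restrictions of $c_1$ along morphisms of discrete cubes), chosen coherently so that the prescribed data on $\Psi$ agree on overlaps. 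Completing $\Psi$ and reading off its facet $\{\omega_{d+2}=1\}$, which then necessarily has the form $\llcorner^{[d+1]}(x,w)$ with $w=\Psi(\vec 1)$, shows that $\llcorner^{[d+1]}(x,w)\in C^{[d+1]}(X)$ for some $w$; by the easy implication and the fact that $\sim_d$ is an equivalence relation (\cite[Proposition 6.3]{GMVI}) we then have $w\sim_d x\sim_d y$.

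\textbf{The main obstacle.} What the construction above does not automatically give is $w=y$: a corner-completion cannot be prescribed at its apex in a general fibrant cubespace, so a priori $w$ is only known to lie in the common $\sim_d$-class of $x$ and $y$. Closing this gap is the essential point, and I expect to do it using the structure of the fibres of the canonical projections: by \cite[Proposition 6.3]{GMVI} the quotient $X/{\sim_d}$ is a nilspace of order at most $d$, and applying the weak structure theorem (Theorem~\ref{th:weak-structure}) along the tower $X/{\sim_d}\to X/{\sim_{d-1}}\to\cdots$ exhibits the $\sim_d$-classes of $X$ as torsors under a free continuous action of a compact abelian group; since $z\sim_d x$ then means $z=a\cdot x$ for a group element $a$, the move acting by $a$ at the vertex $\vec 1$ and trivially elsewhere sends the cube $x^{[d+1]}$ to a cube equal to $x$ off $\vec 1$ and to $z$ at $\vec 1$. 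Equivalently one may simply invoke the corresponding lemma of \cite{CS12} (see also \cite{GMVI}), as this ``alternative definition'' of the canonical equivalence relation is standard in the nilspace literature; an induction on $d$ reduces the matter to $d=1$, i.e.\ to the transitivity of the group action on the fibres of the maximal compact abelian group factor.
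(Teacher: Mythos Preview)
The paper does not prove this proposition; it simply cites \cite[Lemma 6.6]{GMVI}, \cite[Lemma 2.3]{CS12}, and \cite[Proposition 3]{HK08}. Your final fallback suggestion --- to invoke the corresponding lemma from the nilspace literature --- is therefore exactly what the paper does.

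Your attempted direct argument, however, has a genuine gap. The step ``since $z\sim_d x$ means $z=a\cdot x$ for some $a\in A_d$, acting by $a$ at $\vec 1$ alone sends $x^{[d+1]}$ to a cube'' is not a consequence of Theorem~\ref{th:weak-structure} as stated: that theorem only asserts the existence of a free abelian group action whose orbits are the fibres of $\pi_{d-1}$; it says nothing about how this action interacts with the cube structure, and in particular does not assert that a one-vertex translation by $A_d$ preserves $(d{+}1)$-cubes. That compatibility is precisely what the deeper parts of the structure theory establish, and in \cite{GMVI} it is built on top of Lemma~6.6 --- the very statement you are proving. So the argument is circular. (There is also a scope issue: Theorem~\ref{th:weak-structure} is stated for ergodic nilspaces, while here $X$ is merely fibrant.) Note too that if your final claim were available, the whole corner construction with $\Psi$ would be superfluous: you could apply it directly with $z=y$.

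The actual proofs in \cite{GMVI,CS12,HK08} avoid this by a more careful $(d{+}2)$-corner construction that \emph{forces} the missing apex to equal $y$, using both $c_1$ and $c_2$ as prescribed faces (not just $c_2$), so that the completion axiom alone yields $\llcorner^{[d+1]}(x,y)\in C^{[d+1]}(X)$ with no appeal to structure groups. If you want a self-contained argument, that is the route to take.
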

\begin{proof}
This is proven in \cite[Lemma 6.6]{GMVI}
(see also \cite[Lemma 2.3]{CS12} and \cite[Proposition 3]{HK08}).
\end{proof}
We now prove:
\begin{thm}
\label{thm:distal of finite order is nilspace} Let $d\geq1$ and
let $(G,X)$ be a minimal topological dynamical system, then $(G,X)$ is a system of order at
most $d$ iff the cubespace $(X,{C_{G}^{\bullet}})$ is an ergodic
nilspace of order at most $d$.\end{thm}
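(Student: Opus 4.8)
The plan is to deduce both implications from results already in hand, by identifying the canonical nilspace equivalence relation $\sim_d$ attached to the dynamical cubespace $(X,C_G^\bullet)$ with the dynamical relation $\NRP^{[d]}(X)$. The bridge is Definition \ref{def:general RPd}: $(x,y)\in\NRP^{[d]}(X)$ holds precisely when the lower corner $\llcorner^{[d+1]}(x,y)$ of Definition \ref{NC-def} lies in $C_G^{[d+1]}(X)$, i.e.\ is a cube of $(X,C_G^\bullet)$ --- which is exactly the condition used in Proposition \ref{prop:alter-canonical} to characterize $x\sim_d y$. So, \emph{whenever the dynamical cubespace is known to be fibrant}, Proposition \ref{prop:alter-canonical} gives $\sim_d=\NRP^{[d]}(X)$ as relations on $X$, and Remark \ref{rem:uniqueness} identifies $(d+1)$-uniqueness with the statement $\sim_d=\Delta$.

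For the forward implication I would begin with $\NRP^{[d]}(X)=\Delta$. Since $\PP(X)\subset\NRP^{[d]}(X)$ by Lemma \ref{lem:elementary}(1), this forces $\PP(X)=\Delta$, so $(G,X)$ is distal (Remark \ref{rem:Systems-of-order d are distal}); as it is also minimal, Theorem \ref{thm:completion for distal} shows $(X,C_G^\bullet)$ is ergodic and fibrant. Fibrancy now activates the bridge above, giving $\sim_d=\NRP^{[d]}(X)=\Delta$, whence the cubespace has $(d+1)$-uniqueness by Remark \ref{rem:uniqueness}. Being fibrant, ergodic, and of $(d+1)$-uniqueness, $(X,C_G^\bullet)$ is an ergodic nilspace of order at most $d$.

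For the converse I would take as hypothesis that $(X,C_G^\bullet)$ is an ergodic nilspace of order at most $d$, so in particular it is fibrant and has $(d+1)$-uniqueness. Fibrancy again yields $\sim_d=\NRP^{[d]}(X)$ via Proposition \ref{prop:alter-canonical} and Definition \ref{def:general RPd}, while $(d+1)$-uniqueness yields $\sim_d=\Delta$ by Remark \ref{rem:uniqueness}. Combining the two gives $\NRP^{[d]}(X)=\Delta$, i.e.\ $(G,X)$ is a system of order at most $d$. (Ergodicity of $(X,C_G^\bullet)$ is in any case automatic from minimality of $(G,X)$ and plays no role in this direction.)

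The only step that is more than bookkeeping --- the point I would flag as the main obstacle --- is the asymmetric status of fibrancy: in the converse it is handed to us, but in the forward direction it must be manufactured, and the only available route is through distality (forced by $\NRP^{[d]}(X)=\Delta$) together with the already-established Theorem \ref{thm:completion for distal}. Once fibrancy is in place on both sides, the equivalence between $\NRP^{[d]}(X)=\Delta$ and $(d+1)$-uniqueness is immediate from Proposition \ref{prop:alter-canonical}, Definition \ref{def:general RPd}, and Remark \ref{rem:uniqueness}.
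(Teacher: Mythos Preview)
Your proposal is correct and follows essentially the same route as the paper: use distality (forced by $\NRP^{[d]}(X)=\Delta$) together with Theorem \ref{thm:completion for distal} to obtain fibrancy, then invoke Proposition \ref{prop:alter-canonical} to identify $\sim_d$ with $\NRP^{[d]}(X)$ and Remark \ref{rem:uniqueness} to translate between $\sim_d=\Delta$ and $(d+1)$-uniqueness. Your write-up is slightly more explicit than the paper's about why fibrancy is needed and how it is obtained in each direction, but the argument is the same.
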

\begin{proof}
Assume that $(G,X)$ is a system of order at
most $d$. By Remark \ref{rem:Systems-of-order d are distal}, $(G,X)$ is distal.
In view of Theorem \ref{thm:completion for distal} one has only to
establish that $(X,{C_{G}^{\bullet}})$ has $(d+1)-$uniqueness. By
Remark \ref{rem:uniqueness} this is equivalent to $\sim_{d}=\Delta$.
By Proposition \ref{prop:alter-canonical}, $\NRP^{[d]}(X)=\sim_{d}$.
As $\NRP^{[d]}(X)=\Delta$, the result follows. Conversely if $(X,{C_{G}^{\bullet}})$ is a nilspace of order at most $d$ then $\NRP^{[d]}(X)=\sim_{d}=\Delta$.
\end{proof}

In \cite{TaoBlog15} Tao asks for "an interpretation of the regionally proximal relation in the nilspace language." We believe the following theorem answers his question:

\begin{thm}
\label{thm:answer_to_Tao} Let $d\geq1$ and
let $(G,X)$ be a minimal topological dynamical system, then $\pi_{d}:(G,X)\to(G,X/\NRP^{[d]}(X))$
is the maximal factor which is an ergodic
nilspace of order at most $d$.\end{thm}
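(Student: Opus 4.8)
The plan is to assemble this from the structure results already proved, treating the theorem as a translation of Theorem \ref{thm:maximal d factor} through the dictionary provided by Theorem \ref{thm:distal of finite order is nilspace}. First I would check that $\pi_d$ is itself a factor of the required type: by Corollary \ref{cor:canonical factor is d system} the system $(G,X/\NRP^{[d]}(X))$ is a system of order at most $d$, and by Theorem \ref{thm:distal of finite order is nilspace} a minimal system of order at most $d$ is exactly one whose dynamical cubespace is an ergodic nilspace of order at most $d$. (Ergodicity here is automatic from minimality, as remarked after Theorem \ref{thm:completion for distal}.) So $\pi_d:(G,X)\to(G,X/\NRP^{[d]}(X))$ is a factor which is an ergodic nilspace of order at most $d$.

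For maximality, let $\phi:(G,X)\to(G,Y)$ be any factor map for which the dynamical cubespace $(Y,C_G^{\bullet}(Y))$ is an ergodic nilspace of order at most $d$. Applying the converse direction of Theorem \ref{thm:distal of finite order is nilspace}, $(G,Y)$ is then a system of order at most $d$, so Theorem \ref{thm:maximal d factor} applies and yields a factor map $\psi:(G,X/\NRP^{[d]}(X))\to(G,Y)$ with $\phi=\psi\circ\pi_d$. It remains to note that $\psi$, being a dynamical morphism between minimal systems, automatically respects the dynamical cubespace structures: for $g\in\Hcal^{[d]}$ and $x$ in the relevant system one has $\psi^{[d]}(g\,x^{[d]})=g\,\psi(x)^{[d]}$, and by continuity this identity passes to the orbit closures of constant configurations which, by definition, are the cubes $C_G^{[d]}(\cdot)$; hence $\psi$ maps cubes to cubes. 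Therefore every factor of $(G,X)$ which is an ergodic nilspace of order at most $d$ is a factor of $(G,X/\NRP^{[d]}(X))$, which is precisely the assertion.

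I do not expect a genuine obstacle: once Theorems \ref{thm:maximal d factor} and \ref{thm:distal of finite order is nilspace} are in hand, the proof is a short chain of implications. The only point needing a (routine) verification is that a dynamical morphism between minimal systems induces a morphism of the associated dynamical cubespaces, and for this one uses nothing beyond the description of $C_G^{[d]}$ as the closure of $\{g\,x^{[d]}:g\in\Hcal^{[d]},\,x\in X\}$ together with the continuity of $\psi^{[d]}$.
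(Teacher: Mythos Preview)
Your proof is correct and follows essentially the same route as the paper, which simply records that the result ``follows from Theorem \ref{thm:distal of finite order is nilspace} and Theorem \ref{thm:maximal d factor}.'' You have spelled out the two-step translation in more detail than the paper does, including the (harmless but not strictly necessary) remark that a dynamical morphism carries dynamical cubes to dynamical cubes.
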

\begin{proof}
Follows from Theorem \ref{thm:distal of finite order is nilspace} and Theorem \ref{thm:maximal d factor}.
\end{proof}

\subsection{Weak structure theorem for minimal systems of finite order.\label{sub:Weak-structure-theorem}}

In this subsection we adapt the so-called weak structure theorem of
Antol\'\i n Camarena and Szegedy (see Theorem \ref{th:weak-structure}) to the
dynamical context. First we introduce the appropriate terminology:
\begin{defn}
(See \cite[p.15]{G03} and \cite[V(4.1)]{dV93}) A dynamical morphism
$f:(G,X)\to(G,X)$ is called an \textbf{automorphism }if $f$ is bijective.
The group of automorphisms\textbf{ }equipped with the uniform topology
is denoted by $\Aut(G,X)$. A dynamical morphism $\pi:(G,X)\to(G,Y)$
is called a \textbf{principal abelian group extension} if there exists
a compact abelian group $K\subset\Aut(G,X)$ such that for all $x,y\in X$,
$\pi(x)=\pi(y)$ iff there exists a unique $k\in K$ such that $kx=y$.
If $Y=\bullet$, then $(G,X)$ is called an \textbf{abelian group
t.d.s. }It is not hard to see that $(G,X)$ is a minimal abelian group
t.d.s if and only if $X$ is a compact abelian group and there exists
a continuous group homomorphism $\phi:G\to X$ with $\overline{\phi(G)}=X$
such that $G$ acts through $gx=\phi(g)+x$. %

\end{defn}
Our main result in this subsection is:
\begin{thm}
\label{thm:weak structure dynamical}Let $d\geq1$ and let $(G,X)$
be a minimal topological dynamical system of order at most $d$, then
the following is a sequence of principal abelian group extensions:

\begin{equation}\label{eq:tower}
(G,X)\to(G,X/\NRP^{[d-1]}(X))\to\cdots\to(G,X/\NRP^{[1]}(X))\to\bullet
\end{equation}

\end{thm}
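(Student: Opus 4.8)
The plan is to peel the tower \eqref{eq:tower} apart one arrow at a time and feed each arrow into the weak structure theorem for nilspaces. Write $X_k=X/\NRP^{[k]}(X)$ for $0\le k\le d$. Since $\NRP^{[k]}(X)\subseteq\NRP^{[k-1]}(X)$ (restrict cubes to a hyperface), we have $X_d=X$, $X_0=\bullet$, and $X_{k-1}$ is a factor of $X_k$; thus \eqref{eq:tower} is the chain $X=X_d\to X_{d-1}\to\cdots\to X_1\to X_0=\bullet$, and it suffices to prove that for each $1\le k\le d$ the factor map $\pi_{k,k-1}\colon(G,X_k)\to(G,X_{k-1})$ is a principal abelian group extension.

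Fix $k$. First I would identify $\pi_{k,k-1}$ with a canonical factor map of a nilspace. By Corollary \ref{cor:canonical factor is d system}, $(G,X_k)$ is a minimal system of order at most $k$, hence distal by Remark \ref{rem:Systems-of-order d are distal}, so by Theorems \ref{thm:completion for distal} and \ref{thm:distal of finite order is nilspace} the cubespace $(X_k,C_G^\bullet)$ is an ergodic nilspace of order at most $k$. Next, applying Theorem \ref{thm:lifting} to $X\to X_k$ shows $\NRP^{[k-1]}(X_k)$ is the image of $\NRP^{[k-1]}(X)$; since $\NRP^{[k]}(X)\subseteq\NRP^{[k-1]}(X)$, dividing $X_k$ further by this relation returns $X/\NRP^{[k-1]}(X)=X_{k-1}$, with quotient map $\pi_{k,k-1}$. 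On the other hand, fibrancy of $(X_k,C_G^\bullet)$ together with Proposition \ref{prop:alter-canonical} identifies $\NRP^{[k-1]}(X_k)$ with the canonical nilspace relation $\sim_{k-1}$. Hence $\pi_{k,k-1}$ is the map $X_k\to X_k/{\sim_{k-1}}$, and the weak structure theorem (Theorem \ref{th:weak-structure}), applied to the ergodic nilspace $X_k$, produces a compact abelian group $A_k$ acting continuously and freely on $X_k$ by nilspace automorphisms, whose orbits are exactly the fibres of $\pi_{k,k-1}$. (When $k=1$, ergodicity makes $\sim_0$ all of $X_1\times X_1$, so this says $X_1$ is an $A_1$-torsor, i.e.\ a minimal abelian group t.d.s.)

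The remaining task, and the one requiring actual work, is to upgrade this $A_k$-action---which a priori only preserves the cube structure---to an action by \emph{dynamical} automorphisms, i.e.\ to show $A_k\subseteq\Aut(G,X_k)$. Granted this, the statement ``$A_k$ acts freely, with orbits equal to the fibres of $\pi_{k,k-1}$, and commutes with $G$'' is exactly the definition of a principal abelian group extension, and the theorem is proved. My plan here is: (i) check that each $g\in G$ acts on $X_k$ as a \emph{translation} of the nilspace $(X_k,C_G^\bullet)$. Indeed, for a cube $c\in C_G^{[n]}(X_k)$, the configuration in $X_k^{[n+1]}$ with floor $c$ and ceiling $gc$ equals $[g]_F\cdot(c\circ p)$, where $p\colon\{0,1\}^{n+1}\to\{0,1\}^n$ is the projection forgetting the last coordinate and $F=\{\omega\in\{0,1\}^{n+1}\mid\omega_{n+1}=1\}$; here $c\circ p\in C_G^{[n+1]}(X_k)$ by cube invariance and $[g]_F\in\Hcal^{[n+1]}$ preserves $C_G^{[n+1]}(X_k)$, so the configuration is again a cube. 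Being invertible, $x\mapsto gx$ therefore lies in the translation group $\mathcal{T}(X_k)$ of the nilspace $X_k$. (ii) Invoke the nilspace structure theory of \cite{CS12,GMVI,GMVIII}: the group $A_k$ supplied by Theorem \ref{th:weak-structure} is realized inside $\mathcal{T}(X_k)$ as the subgroup of top-degree translations, and this subgroup is \emph{central} in $\mathcal{T}(X_k)$ (commutators with arbitrary translations drop to the next stage of the translation filtration, which is trivial in order at most $k$---the abstract analogue of $[L,L_k]=\{1\}$ for a nilsystem $L/\Gamma$). Since $g$ acts through $\mathcal{T}(X_k)$, it commutes with $A_k$, as needed.

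I expect the main obstacle to be step (ii): extracting from \cite{CS12,GMVI,GMVIII} (or reproving) both that the $G$-action on a minimal system of finite order is by nilspace translations---step (i) above being the short cube-invariance computation---and that the weak-structure group $A_k$ sits centrally in the translation group. The latter is a purely nilspace-theoretic statement which I expect is essentially present in the cited works. If a directly quotable form is not available, the fallback is to run the argument with $K=\{\psi\in\Aut(G,X_k):\pi_{k,k-1}\circ\psi=\pi_{k,k-1}\}$ instead, which acts freely on $X_k$ since $(G,X_k)$ is minimal; one would then need to show $K$ acts \emph{transitively} on each fibre of $\pi_{k,k-1}$, which---using distality of $X_k$ together with the completion and uniqueness properties of its nilspace structure---should amount to a $G$-equivariant refinement of the construction of $A_k$ in Theorem \ref{th:weak-structure}.
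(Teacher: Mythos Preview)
Your strategy matches the paper's: reduce to the nilspace weak structure theorem (Theorem~\ref{th:weak-structure}) and then verify that the resulting compact abelian group commutes with the $G$-action. The setup you give (identifying $\NRP^{[k-1]}(X_k)$ with $\sim_{k-1}$ via Proposition~\ref{prop:alter-canonical}, then invoking Theorem~\ref{th:weak-structure}) is exactly what the paper does.

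The difference is in how the commutation $a(gx)=g(ax)$ is established. You propose to show that each $g\in G$ acts by a nilspace translation and then quote that the top structure group $A_k$ is central in the translation group $\mathcal{T}(X_k)$. This is correct in principle, but as you anticipate, the centrality statement has to be located in \cite{CS12,GMVI}. The paper instead uses the \emph{explicit} description of the structure group from \cite[p.~45]{GMVI}, namely $K_d=\NRP^{[d-1]}(X)/{\sim}$ with $(x,x')\sim(y,y')$ iff $(\llcorner_d(x,x'),\llcorner_d(y,y'))\in C_G^{[d+1]}(X)$, and then checks commutation by a three-line cube computation: if $a=[x,x']$ then $\llcorner_d(x,x')\in C_G^{[d]}(X)$, so doubling gives $(\llcorner_d(x,x'),\llcorner_d(x,x'))\in C_G^{[d+1]}(X)$, and applying $[g]_F$ on the ceiling hyperface yields $(\llcorner_d(x,x'),\llcorner_d(gx,gx'))\in C_G^{[d+1]}(X)$, whence $a=[gx,gx']$ and $a(gx)=gx'=g(ax)$. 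This is precisely your step~(i) computation specialized to the cube $\llcorner_d(x,x')$, but bypasses the abstract translation-group language entirely; it makes your fallback plan unnecessary.
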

In particular $(G,X/\NRP^{[1]}(X))$ is an abelian group t.d.s.
\begin{proof}
As $\NRP^{[d]}(X)=\sim_{d}$, the tower structure (\ref{eq:tower}) is
a direct consequence of (\ref{eq:non dyn tower}), however one has
to show that the successive maps in (\ref{eq:tower}) are\textbf{ }principal
abelian group extensions. As $\NRP^{[d]}(X)$ is a $G$-equivariant closed
equivalence relation the maps are dynamical morphisms. By Theorem
\ref{th:weak-structure} there is an additive compact abelian group
$K_{d}$ acting continuously and freely on $X$ such that the orbits
of $K_{d}$ coincide with the fibres of $\pi_{d-1}:(G,X)\to(G,X/\NRP^{[d-1]}(X))$.
We will show $K_{d}\subset\Aut(G,X)$. From \cite[p.45]{GMVI}:

\[
K_{d}=\NRP^{[d-1]}(X)/\sim
\]
where $(x,x')\sim(y,y')$ if and only if $(\llcorner_{d}(x,x'),\llcorner_{d}(y,y'))\in C_{G}^{[d+1]}(X)$.
Denote the equivalence classes by $[x,x']$. These classes corresponds
to the elements of $K_{d}$. Fix $x\in X$ , $a\in K_{d},$ $g\in G$.
We have to show that the equality $a(gx)=g(ax)$ holds.
Denote $x'=ax$. By
definition (\cite[p.47]{GMVI}), $(x,x')\in\NRP^{[d-1]}(X)$ and $a=[x,x']$.
We conclude $\llcorner_{d}(x,x')\in C_{G}^{[d]}(X)$. By doubling
$(\llcorner_{d}(x,x'),\llcorner_{d}(x,x'))\in C_{G}^{[d+1]}(X)$ (see
Subsection \ref{sub:doubling}) and this implies $(\llcorner_{d}(x,x'),\llcorner_{d}(gx,gx'))\in C_{G}^{[d+1]}(X)$
by Equation (\ref{eq:def of Cn H}) in Subsection \ref{sub:Dynamical-cubespaces}.
Thus $(x,x')\sim(gx,gx')$ which implies $a=[gx,gx']=[gx,g(ax)]$,
i.e $a(gx)=g(ax)$ as desired.
\end{proof}

\subsection{Strong structure theorem for some systems of finite order.\label{sub:Strong-structure-theorem}}

In Theorem \ref{thm: ex_nilsystems} we saw that minimal nilsystems are systems
of finite order. It is not hard to see that an inverse limit of
nilsystems of uniformly bounded order is a system of finite order. It turns out that
under some restrictions on the acting group one can prove
that
these are
the only possible examples.%
\begin{comment}
Indeed for a considerable family of actions one can prove that a system
of finite order is a \textit{pronilsystem}, i.e an inverse limit
of nilsystems.
\end{comment}
We qoute \cite[Theorem 1.29]{GMVIII}:
\begin{thm}\label{thm:stg str}
\label{thm:strong structure}Let $d\geq1$ and let $(G,X)$ be a minimal
topological dynamical system of order at most $d$, where $G$ has
a dense subgroup generated by a compact set and where $X$ is metrizable.
%011116
%{\color{blue}
Then $(G,X)$ is a pronilsystem of order at most $d$.
%, that is,
%}
\end{thm}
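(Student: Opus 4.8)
The plan is to deduce this from the identification, already established above, between minimal systems of order at most $d$ and dynamical nilspaces of order at most $d$, and then to invoke the structure theorem of Gutman, Manners and Varj{\'u} in its stated form \cite[Theorem 1.29]{GMVIII}. So strictly speaking there is nothing new to prove: the content of the present paper that is needed is the translation lemma, and the rest is a citation.

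Concretely, I would first record that since $(G,X)$ is a minimal system of order at most $d$, Theorem \ref{thm:distal of finite order is nilspace} shows that the dynamical cubespace $(X,{C_{G}^{\bullet}})$ is an \emph{ergodic nilspace of order at most $d$}; in particular it is fibrant and has $(d+1)$-uniqueness, with cube structure $C_{G}^{[n]}(X)=\overline{\{gx^{[n]}\mid g\in\Hcal^{[n]},\ x\in X\}}$ arising from the minimal action $(G,X)$ as in Subsection \ref{sub:Dynamical-cubespaces}. Thus $(X,{C_{G}^{\bullet}})$ is a nilspace of finite order whose cubes come from a minimal group action, which is precisely the input required by \cite[Theorem 1.29]{GMVIII}. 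The remaining hypotheses of that theorem, namely that $X$ is metrizable and that $G$ has a dense subgroup generated by a compact set, are exactly those assumed in the statement. Applying \cite[Theorem 1.29]{GMVIII} then yields that $(G,X)$ is an inverse limit of minimal nilsystems $(G,L_k/\Gamma_k)$ with each $L_k$ a nilpotent Lie group of nilpotency class at most $d$, $\Gamma_k\subseteq L_k$ discrete and cocompact, and the $G$-action factoring through a continuous homomorphism $\phi_k\colon G\to L_k$; that is, $(G,X)$ is a pronilsystem of order at most $d$, as claimed.

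The main obstacle is entirely absorbed into \cite{GMVIII} and is not reproved here. Were one to attempt it directly, one would climb the tower of principal abelian group extensions furnished by Theorem \ref{thm:weak structure dynamical}, at each level isolate the compact abelian structure group $K_s\subset\Aut(G,X/\NRP^{[s-1]}(X))$, and reassemble these groups together with the closure of the image of $G$ into a locally compact nilpotent group acting transitively on $X$; the compact-generation hypothesis on $G$ is precisely what forces the relevant structure groups to be Lie rather than merely locally compact, so that $X$ is realized, after passing to an inverse limit, as a quotient $L/\Gamma$ of a nilpotent Lie group by a discrete cocompact subgroup. This Lie-theoretic reconstruction, and in particular the verification that one never leaves the category of Lie groups, is the technical heart of \cite{GMVIII} and lies outside the scope of the present paper; here we only supply, via Theorem \ref{thm:distal of finite order is nilspace}, the hypothesis that makes that theorem applicable.
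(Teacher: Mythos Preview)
Your proposal is correct and matches the paper's approach exactly: the paper simply quotes \cite[Theorem 1.29]{GMVIII} without proof, and your observation that Theorem \ref{thm:distal of finite order is nilspace} supplies the nilspace hypothesis needed to invoke that result is the intended bridge. There is nothing more to add.
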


%011116
We recall that the system $(G,X)$ is a pronilsystem of order at most $d$
when:
\begin{itemize}
\item There exists a sequence of nilpotent Lie groups $G^{(n)}$ of nilpotency
class at most $d$;
\item for each $n$, there is a continuous homomorphism $\alpha_{n}:G\to G^{(n)}$;
\item for each $n$, there is a discrete co-compact subgroup $\Gamma^{(n)}\subseteq G^{(n)}$;
and
\item for each $n>m$, there is a continuous homomorphism $\psi_{m,n}:G^{(n)}\to G^{(m)}$,
\end{itemize}
such that
\begin{itemize}
\item $\psi_{m,n}(\Gamma_{n})\subseteq\Gamma_{m}$,
\item $\alpha_{m}=\psi_{m,n}\circ\alpha_{n}$,
\item and $(G,X)$ is isomorphic as a topological dynamical system to the
inverse limit of the nilsystems $(G,G^{(n)}/\Gamma^{(n)})$ given by the inverse
system of maps induced by $\psi_{m,n}$, where $G$ acts on $G^{(n)}/\Gamma^{(n)}$
via $\alpha_{n}$.
\end{itemize}

\begin{rem}
A minimal t.d.s  isomorphic to a tower of principal abelian group extensions as in (\ref{eq:tower}) is not necessarily of finite order. Consider the famous Furstenberg counerexample (\cite[end of Subsection 3.1]{furstenberg1961strict}, see also \cite[Chapter 5.5]{P81}) of a homeomorphism of the torus $\mathbb{S}^2$ of the form $T:(x,y)\mapsto (x+\alpha, y+\phi(x))$ which is minimal distal but not uniquely ergodic. Denote $\pi:\mathbb{S}^2\mapsto \mathbb{S}$ by $(x,y)\mapsto x $. Then $\pi$ realizes $(\mathbb{S}^2,T)$ as a circle extension of the maximal equicontinuous factor which is also a circle. Note however that $(\mathbb{S}^2,T)$ is not a finite order system. Indeed by a classical Theorem of Green (\cite{auslander1963flows}, see also \cite{parry1970dynamical}) a minimal $\mathbb{Z}$-nilsystem is uniquely ergodic. Thus by the above Theorem  \ref{thm:stg str}  a finite order $\mathbb{Z}$-system is uniquely ergodic.
\end{rem}

\section{A different generalization of $\RP^{[d]}(X)$}

\subsection{The relation between $\NRP^{[1]}$ and $\Q(X)$\label{sec:The-relation-between}}

Recall the definitions of $\Q(X)$ and $\Qeq(X)$ from Subsection
\ref{sub:Proximality-and-its} and the introduction. In this section we investigate the
relation between $\Q(X)$ and $\NRP^{[1]}(X)$ and characterize $(G,X/\NRP^{[1]}(X))$.
We start with a simple proposition.
\begin{prop}
\label{prop:Q subset RP1} Let $(G,X)$ be a minimal t.d.s. If $(x,y)\in\Q(X)$
then $(x,y)\in\NRP^{[1]}(X)$. Thus $\PP(X)\subset\Q(X)\subset\Qeq(X)\subset\NRP^{[1]}(X)$.\end{prop}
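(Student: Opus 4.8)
The plan is to unwind the definitions and use the identification of $X^{[2]}$ with $X^4$ from Equation (\ref{eq:X2 identification}), together with the explicit description of the generators of $\Hcal^{[2]}$ given in the Example following Proposition \ref{prop:H=00003DFDiag}. Recall that $(x,y)\in\NRP^{[1]}(X)$ means precisely that $\llcorner^{[2]}(x,y)=(x,x,x,y)\in C_{G}^{[2]}(X)=\overline{\{gz^{[2]}:g\in\Hcal^{[2]},z\in X\}}$, so it suffices to produce, for each $(x,y)\in\Q(X)$, nets $g_{i}\in\Hcal^{[2]}$ and $z_{i}\in X$ with $g_{i}z_{i}^{[2]}\to(x,x,x,y)$ (using sequences under the metrizability convention of the paper).

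First I would take the data witnessing $(x,y)\in\Q(X)$: sequences $x_{i}\to x$, $y_{i}\to y$ and $g_{i}\in G$ with $\dist(g_{i}x_{i},g_{i}y_{i})\to0$. The idea is to build a $2$-configuration whose four vertices are, in the identification of (\ref{eq:X2 identification}), the points $x_{i}$, $x_{i}$, $g_{i}^{-1}(\text{something close to }g_i x_i)$, $y_{i}$ — more precisely I want to start from the constant configuration $x_{i}^{[2]}=(x_{i},x_{i},x_{i},x_{i})$, apply the diagonal element of $\Hcal^{[2]}$ sending it to $(g_{i}x_{i},g_{i}x_{i},g_{i}x_{i},g_{i}x_{i})$, and then apply one of the face generators, say $(\Id,\Id,h,h)$ or $(k,\Id,k,\Id)$, to move only the coordinates that should end up near $g_i y_i$. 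Concretely, using minimality to pick $h_i\in G$ with $h_i g_i x_i$ close to $g_i y_i$ is not quite what is needed; instead I would observe that applying $(\Id,\Id,\Id,h_i)$ is \emph{not} an element of $\Hcal^{[2]}$, so the honest route is: apply $g_i^{[2]}$ (diagonal) to get $(g_ix_i,g_ix_i,g_ix_i,g_ix_i)$, then apply the generator $(\Id, t_i, \Id, t_i)$ and $(\Id,\Id,s_i,s_i)$ for suitable $t_i,s_i$; the combined effect on the $\vec1$-coordinate is $t_i s_i g_i x_i$ while on the $10$ and $01$ coordinates it is $t_i g_i x_i$ and $s_i g_i x_i$ respectively. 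To make the $01$ and $10$ coordinates return to (something converging to) $x$ after applying $g_i^{-1}$-type correction, one actually wants to first move to $g_i y_i \approx g_i x_i$ and exploit $\dist(g_ix_i,g_iy_i)\to 0$.

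The cleanest approach, and the one I would actually write, avoids this bookkeeping: use Proposition \ref{prop:alt def of nrp} (the upper-corner characterization) or, better, simply note that $(x,y)\in\Q(X)$ with the witnessing sequences gives, after passing to the limit, that the configuration $c\in X^{[2]}$ with $c=(x,x,x,y)$ is a limit of configurations of the form $g_i^{[2]}\cdot\big(\text{lower-hyperface generator}\big)\cdot w_i^{[2]}$; the key algebraic point is that $(\Id,\Id,h,h)(w,w,w,w)=(w,w,hw,hw)$ and then $(\Id,k,\Id,k)$ applied to that gives $(w,kw,hw,khw)$, and applying the diagonal $g$ gives $(gw,gkw,ghw,gkhw)$ — choosing $w=x_i$, and $g_i,h_i,k_i$ so that $g_ix_i\to x$, $g_ik_ix_i\to x$, $g_ih_ix_i\to x$, $g_ik_ih_ix_i\to y$ realizes $(x,x,x,y)$ as a cube. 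Since $\Qeq(X)\subset\NRP^{[1]}(X)$ will then follow because $\NRP^{[1]}(X)$ is, by Theorem \ref{thm:main}, a closed $G$-invariant equivalence relation containing $\Q(X)$, and $\Qeq(X)$ is by definition the \emph{smallest} such relation. The inclusions $\PP(X)\subset\Q(X)$ are immediate from the definitions (take $x_i=x$, $y_i=y$), and $\Q(X)\subset\Qeq(X)$ is the definition of $\Qeq(X)$.

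The main obstacle I expect is the first inclusion $\Q(X)\subset\NRP^{[1]}(X)$: one must be careful that the group elements used to bring three of the four vertices back near $x$ are genuinely assembled from the \emph{hyperface} generators of $\Hcal^{[2]}$ (not from an arbitrary element of $G^{[2]}$), and that the remaining "slack" between $g_i x_i$ and $g_i y_i$ — which is all one controls quantitatively — is correctly routed into the $\vec 1$-coordinate. The honest way to handle this is to write the target $\llcorner^{[2]}(x,y)$ as $\lim_i \gamma_i x_i^{[2]}$ with $\gamma_i = [h_i]_{\{01,11\}}[k_i]_{\{10,11\}}\cdot g_i^{[2]}\in\Hcal^{[2]}$ where, after passing to subnets, $g_i x_i\to x$ (possible since $x\in\overline{Gx_i}$ eventually, but really one should instead arrange $x_i=x$, $y_i=y$ is too weak — so one keeps $x_i,y_i$ general and uses that $g_ix_i$ and $g_iy_i$ have the same limit along a subnet, call it $p$, with $p$ proximal to the value we want), and then $h_i,k_i\to\Id$ suffices only if $p=x$. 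Resolving this cleanly may require invoking Lemma \ref{lem:elementary}(1) ($\PP(X)\subset\NRP^{[d]}(X)$) plus $G$-invariance to first replace $(x,y)$ by a proximal minimal pair, exactly as in the proof of Theorem \ref{thm:lifting}; I would structure the argument that way to sidestep the limiting subtleties.
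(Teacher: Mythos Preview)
Your proposal never actually lands on a proof: you explore several configurations of Host--Kra generators acting on $x_i^{[2]}$, correctly reject the illegal move $(\Id,\Id,\Id,h_i)$, but then leave the construction unresolved (``choosing $w=x_i$ and $g_i,h_i,k_i$ so that $g_ix_i\to x$, $g_ik_ix_i\to x$, $g_ih_ix_i\to x$, $g_ik_ih_ix_i\to y$'' is a wish, not a construction from the regional proximality data). Your fallback plan of invoking Lemma~\ref{lem:elementary}(1) is circular: the proof of that lemma for $d=1$ \emph{cites} Proposition~\ref{prop:Q subset RP1}. The deduction $\Qeq(X)\subset\NRP^{[1]}(X)$ from Theorem~\ref{thm:main} is fine, but the core inclusion $\Q(X)\subset\NRP^{[1]}(X)$ is not established.

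The paper's argument avoids all the bookkeeping you are struggling with by making two choices you did not consider. First, it upgrades the witnessing data: given $x_i\to x$, $y_i\to y$ and $\dist(g_ix_i,g_iy_i)\to 0$, pass to a subnet so that $g_ix_i\to z$, and then use minimality to premultiply by elements $h_j$ with $h_jz\to x$; after a diagonal argument one may assume $g_ix_i\to x$ and $g_iy_i\to x$. Second, and this is the step you missed, it does \emph{not} start from the constant cube $x_i^{[2]}$ but from the cube $(x_i,x_i,y_i,y_i)$, which lies in $C_G^{[2]}(X)$ by minimality (it is the limit of $(\Id,\Id,t_j,t_j)x_i^{[2]}$ for $t_jx_i\to y_i$). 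A single \emph{lower} hyperface generator $(g_i,\Id,g_i,\Id)\in\Hcal^{[2]}$ then gives $(g_ix_i,\,x_i,\,g_iy_i,\,y_i)\in C_G^{[2]}(X)$, which converges to $(x,x,x,y)=\llcorner^{[2]}(x,y)$. The whole difficulty you identified --- routing the slack $\dist(g_ix_i,g_iy_i)\to0$ into the $\vec 1$-coordinate while keeping the other three near $x$ --- disappears once you allow yourself the non-constant starting cube $(x_i,x_i,y_i,y_i)$.
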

\begin{proof}
Let $x_{i},y_{i}\in X$, $g_{i}\in G$ be sequences such that $x_{i}\to x$,
$y_{i}\to y$, $g_{i}x_{i}\to x$ and $g_{i}y_{i}\to x$. As $(G,X)$
is minimal $(x_{i}^{[1]},y_{i}^{[1]})\in C_{G}^{[2]}(X)$. Conclude
$(g_{i}x_{i},x_{i},g_{i}y_{i},y_{i})\in C_{G}^{[2]}(X)$ (using the
identification in Equation (\ref{eq:X2 identification}) in Subsection
\ref{sub:Discrete-cubes}). As $(g_{i}x_{i},x_{i},g_{i}y_{i},y_{i})\to\llcorner^{2}(x,y)$
we have $\llcorner^{2}(x,y)\in C_{G}^{[2]}(X)$ and thus $(x,y)\in\NRP^{[1]}(X)$.
\end{proof}
\begin{defn}
We say $(G,X)$ is a \textbf{homogeneous }t.d.s if and only if $X=K/H$
where $K$ is a compact group, $H\subset K$ is a closed subgroup
and there exists a continuous group homomorphism $\phi:G\to K$ such that
$G$ acts through $gx =g(kH) \phi(g)kH$, where $x =kH \in X =K/H$.
\end{defn}

\begin{thm}
Let $(G,X)$ be a minimal topological dynamical system, then $(G,X/\Qeq(X))$
is the maximal homogeneous factor of $(G,X)$.\end{thm}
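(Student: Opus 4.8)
The plan is to identify, among all factors of the minimal system $(G,X)$, the homogeneous ones with the equicontinuous ones, and then to invoke the Ellis--Gottschalk theorem \cite{EG60} (quoted in the footnote following the definition of $\Qeq(X)$), which says that $(G,X/\Qeq(X))$ is the \emph{maximal} equicontinuous factor of $(G,X)$, i.e.\ it is equicontinuous and every equicontinuous factor of $(G,X)$ is a factor of it. Thus the theorem will follow once we establish the two implications: every homogeneous factor of $(G,X)$ is equicontinuous, and $(G,X/\Qeq(X))$ is homogeneous.

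First I would check that every homogeneous t.d.s.\ is equicontinuous. Suppose $X=K/H$ with $K$ a compact (Hausdorff) group, $H\subset K$ a closed subgroup, and $G$ acting through a continuous homomorphism $\phi:G\to K$ by $g(kH)=\phi(g)kH$. Then the $G$-action on $X$ factors through the action of $K$ on $K/H$ by left translations. On a compact group, the family of all left translations is uniformly equicontinuous with respect to the unique compatible uniformity, and this property passes to the quotient $K/H$; hence the subfamily $\{g\cdot:g\in G\}$ is equicontinuous, so $(G,K/H)$ is an equicontinuous t.d.s. Consequently any homogeneous factor of $(G,X)$ is an equicontinuous factor, and therefore a factor of $(G,X/\Qeq(X))$.

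Second I would show that $(G,X/\Qeq(X))$ is itself homogeneous, which together with the previous paragraph exhibits it as the largest homogeneous factor. Write $(G,Z)=(G,X/\Qeq(X))$; since $(G,X)$ is minimal, $(G,Z)$ is minimal, and by \cite{EG60} it is equicontinuous. For a minimal equicontinuous system the Ellis semigroup $K:=E(G,Z)$ is a compact topological group which acts continuously on $Z$, and by minimality the $K$-action is transitive (the orbit $Kz_0$ is compact, hence closed, and contains $Gz_0$, hence equals $Z$). Fixing $z_0\in Z$ and putting $H=\{p\in K:pz_0=z_0\}$, the orbit map $pH\mapsto pz_0$ is a $G$-equivariant homeomorphism from $(G,K/H)$ onto $(G,Z)$, where $G$ acts on $K/H$ via the canonical homomorphism $\phi:G\to K$; this is exactly the structure of a homogeneous t.d.s. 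The main obstacle is precisely this step: one must carefully verify that for a minimal equicontinuous system the enveloping semigroup is a \emph{compact topological group} acting \emph{continuously} and transitively, so that the presentation $Z\cong K/H$ genuinely intertwines the dynamics. This is classical (see the treatment of equicontinuous minimal flows and their structure groups, e.g.\ in Auslander's book or in \cite{dV93}), and once it is in hand the identification with the paper's definition of a homogeneous t.d.s.\ is routine. Combining the two directions, the homogeneous factors of $(G,X)$ are exactly its equicontinuous factors, among which $(G,X/\Qeq(X))$ is the maximal one.
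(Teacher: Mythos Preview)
Your argument is correct and is exactly the standard route: identify homogeneous factors with equicontinuous factors (one direction via uniform equicontinuity of left translations on a compact group, the other via the classical fact that the enveloping semigroup of a minimal equicontinuous flow is a compact topological group acting transitively), and then invoke the Ellis--Gottschalk characterisation of $\Qeq(X)$. The paper does not give its own proof here but simply cites \cite[Theorem 1.8]{G03} and \cite[V(1.6)]{dV93}, which contain precisely the argument you have sketched; so your proposal matches the intended approach.
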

\begin{proof}
See \cite[Theorem 1.8]{G03} and \cite[V(1.6)]{dV93}.\end{proof}
\begin{lem}
\label{lem:abelian has trivial RP1}If $(G,K)$ is a minimal abelian
group t.d.s, then $\NRP^{[1]}(K)=\Delta$.\end{lem}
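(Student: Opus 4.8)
The plan is to compute the dynamical cube set $C_{G}^{[2]}(K)$ explicitly and then to observe that a lower corner $\llcorner^{[2]}(x,y)$ can lie inside it only when $x=y$. Recall that since $(G,K)$ is a minimal abelian group t.d.s, $K$ is a compact abelian group and there is a continuous homomorphism $\phi: G\to K$ with $\overline{\phi(G)}=K$ such that $G$ acts by $gz=\phi(g)+z$. Consequently the coordinatewise action of $G^{[2]}$ on $K^{[2]}$ factors through the homomorphism $\phi^{[2]}: G^{[2]}\to K^{[2]}$ obtained by applying $\phi$ in each coordinate, and the group $\Hcal^{[2]}$ acts on $K^{[2]}$ by translations by the subgroup $\phi^{[2]}(\Hcal^{[2]})\subseteq K^{[2]}$.

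First I would identify $K^{[2]}$ with $K\times K\times K\times K$ via Equation (\ref{eq:X2 identification}). Using that $\Hcal^{[2]}$ is generated by the two upper hyperface subgroups $[G]_{F_{1}},[G]_{F_{2}}$ together with $\Del^{[2]}(G)$ (see the example in Subsection \ref{sub:Host-Kra-cube-group}), and that $K$ is abelian so that the images of these generating families commute, one gets
\[
\phi^{[2]}(\Hcal^{[2]})=\{(c,\,a+c,\,b+c,\,a+b+c):a,b,c\in\phi(G)\}.
\]
Applying these translations to the constant configurations $z^{[2]}=(z,z,z,z)$, $z\in K$, and absorbing $c$ into $z$, the set $\{gz^{[2]}:g\in\Hcal^{[2]},\ z\in K\}$ equals $\{(z,z+a,z+b,z+a+b):z\in K,\ a,b\in\phi(G)\}$. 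This set is dense in the set $M=\{(z,z+a,z+b,z+a+b):z,a,b\in K\}$, which, being the continuous image of the compact group $K^{3}$, is closed. Hence, by the definition of $C_{G}^{[2]}(K)$ in Equation (\ref{eq:def of Cn H}),
\[
C_{G}^{[2]}(K)=M=\{(z,\,z+a,\,z+b,\,z+a+b):z,a,b\in K\}.
\]

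Finally, suppose $(x,y)\in\NRP^{[1]}(K)$, that is, $\llcorner^{[2]}(x,y)\in C_{G}^{[2]}(K)$ by Definition \ref{def:general RPd}. In the coordinates above $\llcorner^{[2]}(x,y)=(x,x,x,y)$, so there exist $z,a,b\in K$ with $z=x$, $z+a=x$, $z+b=x$ and $z+a+b=y$; the middle two equalities give $a=b=0$, whence $y=z=x$. This proves $\NRP^{[1]}(K)=\Delta$. The only step requiring any care is the identification of $\phi^{[2]}(\Hcal^{[2]})$ together with the passage to the closure, but both are routine once one exploits the commutativity of $K$; in particular no enveloping-semigroup input is needed for this lemma.
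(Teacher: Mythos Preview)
Your proof is correct and essentially the same as the paper's. The paper defines the closed $\Hcal^{[2]}$-invariant set $B_{G}^{[2]}(K)=\{(x,y,z,y+z-x):x,y,z\in K\}$ containing the diagonal, observes $C_{G}^{[2]}(K)\subset B_{G}^{[2]}(K)$, and checks that $(x,x,x,y)\in B_{G}^{[2]}(K)$ forces $x=y$; you go one step further and identify $C_{G}^{[2]}(K)$ exactly (which in fact equals $B_{G}^{[2]}(K)$ once you use density of $\phi(G)$), but the algebraic constraint you extract is the same.
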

\begin{proof}
Consider $B_{G}^{[2]}(K)\triangleq\{(x,y,z,x+y-z)|\,\ x,y,z\in K\}$.
Notice $B_{G}^{[2]}(K)$ is closed $\Hcal^{[2]}$-invariant and $\{x^{[2]}|\,x\in X\}\subset B_{G}^{[2]}(K)$.
We conclude $C_{G}^{[2]}(K)\subset B_{G}^{[2]}(K)$. Thus $(x,x,x,y)\in C_{G}^{[2]}(K)$
implies $x=y$ and $\NRP^{[1]}(K)=\Delta$.\end{proof}
\begin{thm}
\label{thm:maximal abelian factor}Let $(G,X)$ be a minimal topological
dynamical system, then $\pi_{1}:(G,X)\to(G,X/\NRP^{[1]}(X))$ is the
maximal abelian group factor of $(G,X)$. That is, if $\phi:(G,X)\to(G,K)$
is a factor map where $(G,K)$ is an abelian group t.d.s., then there
exists a map $\psi:(G,X/\NRP^{[1]}(X))\to(G,K)$ such that $\phi=\psi\circ\pi_{1}$. \end{thm}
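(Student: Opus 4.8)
The plan is to verify separately the two assertions implicit in the statement: (i) that $(G,X/\NRP^{[1]}(X))$ is itself an abelian group t.d.s., and (ii) that every abelian group factor of $(G,X)$ factors through $\pi_{1}$.

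For (i), I would set $Y=X/\NRP^{[1]}(X)$; as a factor of the minimal system $(G,X)$ it is minimal, and by Corollary \ref{cor:canonical factor is d system} it satisfies $\NRP^{[1]}(Y)=\Delta$, i.e.\ $(G,Y)$ is a minimal system of order at most $1$. I would then apply Theorem \ref{thm:weak structure dynamical} with $d=1$ to $(G,Y)$: the concluding sentence of that theorem gives that $(G,Y/\NRP^{[1]}(Y))$ is an abelian group t.d.s., and since $\NRP^{[1]}(Y)=\Delta$ we have $Y/\NRP^{[1]}(Y)=Y$. Hence $(G,Y)$ is an abelian group t.d.s.\ and $\pi_{1}$ is a factor map onto one.

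For (ii), let $\phi:(G,X)\to(G,K)$ be a factor map onto an abelian group t.d.s.\ $(G,K)$. By Lemma \ref{lem:abelian has trivial RP1} we have $\NRP^{[1]}(K)=\Delta$, and by the functoriality of $\NRP^{[1]}$ (Lemma \ref{lem:elementary}(5)) we get $\phi\times\phi(\NRP^{[1]}(X))\subseteq\NRP^{[1]}(K)=\Delta$. Thus $\phi$ is constant on the fibres of $\pi_{1}$, and since $\pi_{1}$ is a quotient map and $\phi$ is continuous and $G$-equivariant, $\phi$ descends to a continuous, surjective, $G$-equivariant map $\psi:(G,X/\NRP^{[1]}(X))\to(G,K)$ with $\phi=\psi\circ\pi_{1}$, which is exactly what is required.

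The only substantial point is step (i): the fact that a minimal system with trivial $\NRP^{[1]}$ is an abelian group t.d.s.\ is precisely the $d=1$ case of the dynamical weak structure theorem (Theorem \ref{thm:weak structure dynamical}), where the group operation on $X/\NRP^{[1]}(X)$ is reconstructed from the $2$-cube structure $C_{G}^{[2]}(X)$; in the background one also uses $\Qeq(X)\subseteq\NRP^{[1]}(X)$ (Proposition \ref{prop:Q subset RP1}), so that the factor is in particular equicontinuous. Step (ii) is then routine, relying only on the functoriality of $\NRP^{[1]}$ and its triviality on abelian group systems.
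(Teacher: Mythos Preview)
Your proof is correct and follows essentially the same approach as the paper: invoke Theorem \ref{thm:weak structure dynamical} to see that $X/\NRP^{[1]}(X)$ is an abelian group t.d.s., then use Lemma \ref{lem:elementary}(5) together with Lemma \ref{lem:abelian has trivial RP1} to show any abelian group factor factors through $\pi_1$. Your step (i) is in fact slightly more careful than the paper's, since you explicitly pass to $Y=X/\NRP^{[1]}(X)$ and note (via Corollary \ref{cor:canonical factor is d system}) that it has order at most $1$ before applying Theorem \ref{thm:weak structure dynamical}, whereas the paper invokes that theorem directly.
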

\begin{proof}
By Theorem \ref{thm:weak structure dynamical}, $(G,X/\NRP^{[1]}(X))$
is an abelian group factor of $(G,X)$. It is enough to show that
$(x,y)\in\NRP^{[1]}(X)$, implies $\phi(x)=\phi(y)$. Indeed by Lemma
\ref{lem:elementary}(5) $(x,y)\in\NRP^{[1]}(X)$ implies $(\phi(x),\phi(y))\in\NRP^{[1]}(K)$
and thus by Lemma \ref{lem:abelian has trivial RP1} $\phi(x)=\phi(y)$.
\end{proof}

\begin{rem}
\label{rem:strong str d=00003D00003D1}It is not hard to show that a compact abelian group is the inverse limit of compact abelian Lie
groups (see \cite[Theorem 5.2(a)]{Sep07}). Thus Theorem \ref{thm:maximal abelian factor}
implies that a minimal t.d.s $(G,X)$ of order $1$ is a pronilsystem
of order $1$. This strengthes Theorem \ref{thm:strong structure}
in the case $d=1.$
\end{rem}
Note if $G$ is not abelian it may happen that $\Qeq(X)\neq\NRP^{[1]}(X)$:
\begin{example}
\label{ex:A5} Let $G=X=A_{5}$, the alternating group on $5$ symbols,
where $G$ acts on $X$ by left multiplication. Clearly the minimal
t.d.s $(G,X)$ is equicontinuous so, $\Q(X)=\Qeq(X)=\triangle$.
As $A_{5}$ is simple, it is perfect. By Lemma \ref{lem:elementary},
$\NRP^{[d]}(X)=X\times X$ for all $d\geq1$.
\end{example}

\subsection{A different generalization of $\RP^{[d]}(X)$}\label{sec:diff gen}

We now present a different higher order generalization of the classical regionally
proximal relation for arbitrary group actions. This definition has the advantage that for
$d=1$ and arbitrary acting group it coincides with the classical definition of $\Q(X)$. Moreover for
$d>1$ and abelian acting group it coincides with $\RP^{[d]}(X)$ as defined by Host, Kra and Maass. Therefore we will keep using the notation $\RP^{[d]}(X)$ for the new definition where we put no restriction on the acting group.
\begin{defn}\label{def:gen RPd}
Let $(G,X)$ be a t.d.s. Let $x,y\in X$. A pair $(x,y)\in X\times X$
is said to be \textbf{regionally proximal of order $d$},
denoted $(x,y)\in\RP^{[d]}(X)$ if there are sequences $f_{i}\in\Fcal^{[d]}$,
$x_{i},y_{i}\in X$, and $a_{*}\in X_{*}^{[d]}$ so that:
\begin{equation}\label{eq:diff RPd}
(f_{i}x_{i}^{[d]},f_{i}y_{i}^{[d]})\rightarrow((x,a_{*}),(y,a_{*})).
\end{equation}
\end{defn}

\begin{prop} Let $(G,X)$ be a minimal t.d.s. Then $\RP^{[d]}(X)\subset \NRP^{[d]}(X).$
\end{prop}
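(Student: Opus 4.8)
The plan is to exhibit the upper corner $\urcorner^{[d+1]}(x,y)$ inside $C_{G}^{[d+1]}(X)$ and then quote Proposition~\ref{prop:alt def of nrp}, according to which $(x,y)\in\NRP^{[d]}(X)$ if and only if $\urcorner^{[d+1]}(x,y)\in C_{G}^{[d+1]}(X)$. So fix $(x,y)\in\RP^{[d]}(X)$, witnessed as in Definition~\ref{def:gen RPd} by $f_{i}\in\Fcal^{[d]}$, $x_{i},y_{i}\in X$ and $a_{*}\in X_{*}^{[d]}$ with $f_{i}x_{i}^{[d]}\to(x,a_{*})$ and $f_{i}y_{i}^{[d]}\to(y,a_{*})$; as elsewhere we may assume $X$ metrizable and argue with sequences. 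Under the identification $X^{[d+1]}=X^{[d]}\times X^{[d]}$ (floor, ceiling) the hypothesis reads $(f_{i}x_{i}^{[d]},f_{i}y_{i}^{[d]})\to\big((x,a_{*}),(y,a_{*})\big)$.

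\emph{Step 1 (the mixed configuration is a cube).} First I would show that the $(d+1)$-configuration $c=\big((x,a_{*}),(y,a_{*})\big)$ — floor $(x,a_{*})$, ceiling $(y,a_{*})$ — lies in $C_{G}^{[d+1]}(X)$. Fix $i$. By minimality $y_{i}\in\overline{\Or(x_{i},G)}$, so choose $t_{i,k}\in G$ with $t_{i,k}x_{i}\to_{k}y_{i}$. Let $F=\{\omega\in\{0,1\}^{d+1}:\omega_{d+1}=1\}$, a hyperface; the generator $[t_{i,k}]_{F}\in\Hcal^{[d+1]}$ sends the constant configuration $x_{i}^{[d+1]}$ to $\big(x_{i}^{[d]},(t_{i,k}x_{i})^{[d]}\big)$, and then $f_{i}\times f_{i}\in\Fcal^{[d+1]}$ (doubling, Subsection~\ref{sub:doubling}) sends this to $\big(f_{i}x_{i}^{[d]},\,f_{i}(t_{i,k}x_{i})^{[d]}\big)\in C_{G}^{[d+1]}(X)$. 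Since each coordinate of $f_{i}$ is an element of $G$ and hence acts on $X$ as a homeomorphism, the fixed element $f_{i}\in G^{[d]}$ acts continuously on $X^{[d]}$, so $f_{i}(t_{i,k}x_{i})^{[d]}\to_{k}f_{i}y_{i}^{[d]}$; as $C_{G}^{[d+1]}(X)$ is closed this gives $(f_{i}x_{i}^{[d]},f_{i}y_{i}^{[d]})\in C_{G}^{[d+1]}(X)$. Letting $i\to\infty$ and invoking closedness once more, $c\in C_{G}^{[d+1]}(X)$.

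\emph{Steps 2--3 (contract the $a_{*}$ part and conclude).} From $f_{i}y_{i}^{[d]}\to(y,a_{*})$ and $\Fcal^{[d]}\subset\Hcal^{[d]}$ one gets $(y,a_{*})\in C_{G}^{[d]}(X)$; its $\vec{0}$-coordinate is $y$, so $(y,a_{*})\in C_{y}^{[d]}(X)$. By Corollary~\ref{cor:approaching constant} there are $e_{k}\in\Fcal^{[d]}$ with $e_{k}(y,a_{*})\to y^{[d]}$, and since every element of $\Fcal^{[d]}$ fixes the $\vec{0}$-coordinate this says $e_{k}a_{*}\to y_{*}^{[d]}$ in $X_{*}^{[d]}$. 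Now apply $e_{k}\times e_{k}\in\Fcal^{[d+1]}$ to $c$: $(e_{k}\times e_{k})c=\big((x,e_{k}a_{*}),(y,e_{k}a_{*})\big)\in C_{G}^{[d+1]}(X)$, and as $k\to\infty$ this converges to $\big((x,y_{*}^{[d]}),(y,y_{*}^{[d]})\big)$, which read as a $(d+1)$-configuration via the floor/ceiling identification is precisely $\urcorner^{[d+1]}(x,y)$ (the vertex $\vec{0}$ carries $x$, every other vertex carries $y$). Closedness of $C_{G}^{[d+1]}(X)$ yields $\urcorner^{[d+1]}(x,y)\in C_{G}^{[d+1]}(X)$, and Proposition~\ref{prop:alt def of nrp} then gives $(x,y)\in\NRP^{[d]}(X)$.

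The main obstacle is Step~1: the floor and ceiling of a cube in $C_{G}^{[d+1]}(X)$ are far from independent of one another, and the two witnessing sequences $f_{i}x_{i}^{[d]}$, $f_{i}y_{i}^{[d]}$ sit over different base points $x_{i}\neq y_{i}$, so the mixed configuration is not manifestly a cube. The device that makes it work is to transport $x_{i}$ to $y_{i}$ along its $G$-orbit (minimality) on the \emph{ceiling copy only}, via the hyperface generator $[\,\cdot\,]_{F}$ with $F=\{\omega_{d+1}=1\}$, together with the remark that a fixed element of $G^{[d]}$ acts continuously so the inner limit survives; once $c$ is known to be a cube, Steps~2--3 are just a contraction through Corollary~\ref{cor:approaching constant}.
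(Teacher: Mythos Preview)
Your proof is correct and follows essentially the same route as the paper's: first establish $\big((x,a_{*}),(y,a_{*})\big)\in C_{G}^{[d+1]}(X)$ via minimality (transporting $x_{i}$ to $y_{i}$ on the ceiling) and doubling, then contract the $a_{*}$ part using Corollary~\ref{cor:approaching constant}. The only cosmetic difference is that the paper contracts the floor $(x,a_{*})$ toward $x^{[d]}$ (landing on a coordinate-permuted lower corner), whereas you contract the ceiling $(y,a_{*})$ toward $y^{[d]}$ and land directly on $\urcorner^{[d+1]}(x,y)$, invoking Proposition~\ref{prop:alt def of nrp} to finish.
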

\begin{proof}
Assume that $(x,y)\in\RP^{[d]}(X)$. By definition there are sequences $f_{i}\in\Fcal^{[d]}$,
$x_{i},y_{i}\in X$, and $a_{*}\in X_{*}^{[d]}$ so that (\ref{eq:diff RPd}) holds. Our first goal is to show that
$((x,a_{*}),(y,a_{*}))$ is a cube. Indeed if this is true then $(x,a_{*})\in C_G^{[d]}(X)$, and hence by Corollary \ref{cor:approaching constant} there are $g_{i}\in\Fcal^{[d]}$ such that $g_i(x,a_{*} )\rightarrow x^{[d]}$. Thus by doubling (see Subsection \ref{sub:doubling}), it follows that $(g_i(x,a_{*}),g_i(y,a_{*}))\rightarrow (x^{[d]},y,x^{[d]}_*)\in C_G^{[d+1]}(X)$ which implies that $(x,y)\in \NRP^{[d]}(X)$ as desired.

To show that $((x,a_{*}),(y,a_{*}))\in C_G^{[d+1]}(X)$, we note that as $(G,X)$ is minimal, we have $(x_i^{[d]},y_i^{[d]})\in C_G^{[d+1]}(X)$. Thus again by doubling $(f_{i}x_{i}^{[d]},f_{i}y_{i}^{[d]})\in C_G^{[d+1]}(X)$ and it follows.
\end{proof}

\begin{rem} Let us look at Example \ref{ex:A5} again. We know that $\NRP^{[d]}(X)=X\times X$ for all $d\geq1$.
At the same time $\Q(X)=\triangle$. This implies that $\RP^{[d]}(X)=\Delta$, as $\RP^{[d]}(X)\subset \RP^{[1]}(X)=\Q(X)$
by Lemma \ref{lem:elementary}. Thus, for this system, $\NRP^{[d]}(X)\not=\RP^{[d]}(X)$ for all $d\ge 1$.

%Let $G=X=A_{5}$, the alternating group on $5$ symbols,
%where $G$ acts on $X$ by left multiplication. Clearly the minimal
%t.d.s $(G,X)$ is equicontinuous so, $\Q(X)=\Qeq(X)=\triangle$.
%As $A_{5}$ is simple, it is perfect. By Lemma \ref{lem:elementary},
%$\NRP^{[d]}(X)=X\times X$ for all $d\geq1$.

%If $G$ is a finite non-abelian group, then it is easy to see that for the
%minimal system $(G,G)$, $\RP^{[1]}(X)\not=\NRP^{[1]}(X)$, since $X/\RP^{[1]}(X)$ is the maximal
%equicontinuous factor and at the same time $X/\NRP^{[1]}(X)$ is the maximal abelian factor
%by Theorem \ref{thm:maximal abelian factor}.
\end{rem}

\section{A minimal system which does not induce a fibrant cubespace\label{sec:A-dynamical-cubespace}}

According to Theorem \ref{thm:completion for distal} a minimal distal
action induces a fibrant cubespace. Here we exhibit an example of
a non-distal minimal $\mathbb{{Z}}$-system which is not fibrant.
This is proven by showing that a weaker property, the so-called glueing
property, fails to hold for this system. We start by a definition
and a proposition:
\begin{defn}
We say a cubespace $(X,{C^{\bullet}})$ has the \textbf{glueing property}
if ``glueing'' two cubes along a common face yields another cube.
Formally, let $d\geq1$ and suppose $c,c'\in C_{G}^{[d]}(X)$, $c=(c_{1},c_{2})$
and $c'=(c_{2},c_{3})$, then $(c_{1},c_{3})\in C_{}^{d}(X)$.\end{defn}
\begin{prop}
\label{prop:fibrant implies gluing}If a cubespace \textup{$(X,{C^{\bullet}})$}
is fibrant then it has the glueing property.\end{prop}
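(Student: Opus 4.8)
The plan is to prove the glueing property by induction on $d$, realizing in each step the sought cube as the image of a $(d+1)$-cube — obtained by completion — under a morphism of discrete cubes. Throughout I fix the fibrant cubespace $(X,{C^{\bullet}})$ and use two instances of cube invariance: composing with the reflection $\sigma$ of the last coordinate shows $(c_{1},c_{2})\in C^{[d]}(X)$ implies $(c_{2},c_{1})\in C^{[d]}(X)$, and restricting a cube to a face again gives a cube (so faces of $c$ and of $c'$ are cubes of one lower dimension).

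\textbf{Base case $d=1$.} Given $c=(c_{1},c_{2}),\,c'=(c_{2},c_{3})\in C^{[1]}(X)$, define a map on $\{0,1\}^{2}\setminus\{\vec{0}\}$ by $(1,0)\mapsto c_{1}$, $(0,1)\mapsto c_{3}$, $(1,1)\mapsto c_{2}$. Its two hyperfaces avoiding $\vec{0}$ carry $(c_{1},c_{2})=c$ and $(c_{3},c_{2})=c'\circ\sigma$, both $1$-cubes, so this is a $2$-corner; by $2$-completion it extends to some $\Phi\in C^{[2]}(X)$. The morphism $\o\mapsto(\o,\ol{\o})$ has image $\{(1,0),(0,1)\}$, which avoids $\vec{0}$, so $\Phi\circ(\o,\ol{\o})=(c_{3},c_{1})\in C^{[1]}(X)$ by cube invariance, whence $(c_{1},c_{3})=(c_{3},c_{1})\circ\sigma\in C^{[1]}(X)$.

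\textbf{Inductive step.} Assume the glueing property in all dimensions $<d$, and let $c=(c_{1},c_{2}),\,c'=(c_{2},c_{3})\in C^{[d]}(X)$. Identify $\{0,1\}^{d+1}$ with $\{0,1\}^{d-1}\times\{0,1\}^{2}$, writing a point as $(\o',s,t)$, and for fixed $(s,t)$ call $\{(\o',s,t):\o'\in\{0,1\}^{d-1}\}$ a \emph{square}. Define $\psi$ on $\{0,1\}^{d+1}\setminus\{\vec{0}\}$ to equal $c_{2},c_{1},c_{3},c_{2}$ on the squares $(0,0),(1,0),(0,1),(1,1)$ respectively (the vertex $\vec{0}$, which lies in the $(0,0)$-square, being omitted). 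The $d+1$ hyperfaces of $\{0,1\}^{d+1}$ avoiding $\vec{0}$ are: $\{s=1\}$, carrying $(c_{1},c_{2})=c$; $\{t=1\}$, carrying $(c_{3},c_{2})=c'\circ\sigma$; and, for $1\le i\le d-1$, $\{\o'_{i}=1\}$, carrying the $d$-configuration whose four squares are the restrictions of $c_{2},c_{1},c_{3},c_{2}$ to $\{\o'_{i}=1\}$. The first two are cubes. For the remaining $d-1$ one observes that the restrictions of $c$ and of $c'$ to $\{\o'_{i}=1\}$ are cubes in $C^{[d-1]}(X)$ sharing the common face $c_{2}|_{\{\o'_{i}=1\}}$, and that the $d$-configuration in question is a cube by an entirely analogous completion-plus-morphism argument on this restricted data (carried out by an auxiliary induction whose base case is again $2$-completion, the ambient dimension strictly dropping at each step). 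Granting this, $\psi$ is a $(d+1)$-corner, so $(d+1)$-completion gives $\Phi\in C^{[d+1]}(X)$ extending $\psi$. The morphism $f\colon\{0,1\}^{d}\to\{0,1\}^{d+1}$, $f(\o_{1},\ldots,\o_{d})=(\o_{1},\ldots,\o_{d-1},\o_{d},\ol{\o_{d}})$, has image the union of the $(1,0)$- and $(0,1)$-squares, which avoids $\vec{0}$; hence $\Phi\circ f=\psi\circ f=(c_{3},c_{1})\in C^{[d]}(X)$ by cube invariance, and finally $(c_{1},c_{3})=(c_{3},c_{1})\circ\sigma\in C^{[d]}(X)$, completing the induction.

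The main obstacle is exactly the parenthetical step above — showing that the $d-1$ ``mixed'' hyperfaces $\{\o'_{i}=1\}$ are genuine cubes. This is not automatic: in an arbitrary fibrant cubespace a configuration all of whose proper faces are cubes need not be a cube, so these faces must be produced by re-running the completion-plus-morphism construction on the restricted cubes, the recursion terminating because the dimension decreases at each call and bottoms out at $2$-completion.
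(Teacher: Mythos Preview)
Your inductive scheme has a genuine gap: the claim that the $d-1$ ``mixed'' hyperfaces $\{\o'_i=1\}$ of $\psi$ are cubes is false in general, and the proposed ``auxiliary induction'' cannot repair it. Re-running the completion-plus-morphism argument on the restricted data $(c_1|,c_2|),(c_2|,c_3|)\in C^{[d-1]}(X)$ produces, at best, a $d$-cube $\Phi'$ that agrees with the required four-block configuration $c_2|,c_1|,c_3|,c_2|$ on $\{0,1\}^d\setminus\{\vec{0}\}$; but the mixed hyperface $\{\o'_i=1\}$ does not contain $\vec{0}\in\{0,1\}^{d+1}$, so the value at its own ``origin'' is prescribed (it is $c_2|(\vec{0}_{d-2})$), and you need \emph{that} specific configuration to be a cube, not merely some completion of its punctured version.

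Here is a concrete failure. Take $X=\Z/2\Z$ with its standard degree-$1$ nilspace structure (so $C^{[n]}(X)$ consists of those $c\in X^{[n]}$ with $\sum_\o (-1)^{|\o|}c(\o)=0$ for $n\ge2$); this is fibrant. Let $d=2$, $c_1=c_2=(0,0)$, $c_3=(1,1)$ in $X^{[1]}$. Then $c=(c_1,c_2)=(0,0,0,0)$ and $c'=(c_2,c_3)=(0,0,1,1)$ are $2$-cubes. Your $\psi$ on $\{0,1\}^3\setminus\{\vec{0}\}$ restricted to the sole mixed hyperface $\{\o'_1=1\}$ is the configuration $(s,t)\mapsto(c_2(1),c_1(1),c_3(1),c_2(1))=(0,0,1,0)$, whose alternating sum is $-1\ne0$. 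Hence this hyperface is \emph{not} a $2$-cube, $\psi$ is not a $3$-corner, and the argument collapses already at $d=2$.

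The paper itself does not give a proof but defers to \cite[Proposition~6.2]{GMVI}. The argument there avoids this obstruction by choosing the $(d+1)$-dimensional configuration more carefully: rather than placing four copies of the $c_i$ in a $2\times2$ block pattern, one arranges $c$ and $c'$ on two adjacent hyperfaces of $\{0,1\}^{d+1}$ sharing the common $c_2$-face, and fills the remaining vertices by \emph{duplication} from the already-placed data so that every hyperface not containing the vertex to be completed is a cube by cube invariance alone (no auxiliary induction needed). The glued cube $(c_1,c_3)$ is then read off via a diagonal morphism. Your four-block layout forces the $(0,0)$-block to carry $c_2$, which is exactly what makes the mixed faces uncontrollable.
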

\begin{proof}
See \cite[Proposition 6.2]{GMVI}.\end{proof}
\begin{example}
We now present an example of a non-distal minimal $\mathbb{{Z}}$-system
which is not fibrant. This example is closely related to the examples
given in \cite[p. 254]{G94} and \cite[Example 3.6]{tu2013dynamical}.
Let $\mathbb{S}^{1} \cong \mathbb{R}/\mathbb{Z}$
be the circle group, also identified with the interval $[0,1]$
with identified endpoints. Let $T:\mathbb{S}^{1}\rightarrow\mathbb{S}^{1}$
be the rotation by an irrational  number $\alpha$,
$Tx=x+\alpha \pmod 1$. This is a minimal and equicontinuous system.
Let $H_{1}=[0,\frac{1}{2}]$
and $H_{0}=[\frac{1}{2},1]$ be subsets of $\mathbb{S}^{1}$. Define
$f(n)=\chi_{H_{0}}(\{n\alpha\})$ for $n\in\Z$ where $\{n\alpha\}=n\alpha\pmod1$.
We consider $f$ as an element in the full shift on two letters and
define $X$ to be its orbit closure, i.e.:
\[
X=\overline{\Or(f,\mathbb{{Z}})}\subset\{0,1\}^{\mathbb{{Z}}}
\]
We will denote the shift on $\{0,1\}^{\Z}$ by $\sigma$. The system
$(X,\sigma)$ is a particular example of a \textit{Sturmian-like system}
(for an introduction to these systems see \cite[p.239]{A}). Define
the following natural dynamical morphism $\pi:(X,\sigma)\rightarrow(\mathbb{S}^{1},T)$
by $\pi((x)_{n\in\mathbb{{Z}}})=\bigcap_{n\in\mathbb{{Z}}}T^{-n}H_{x_{n}}$.
Note that for all $x\in X$, the intersection consists of one element
exactly of the circle so the map is well defined and continuous. Moreover
for any element of the circle which does not belong to the orbit of
$0$ or $\frac{1}{2}$, i.e. for $x\notin E=\bigcup_{n\in Z}T^{n}\{0,\frac{1}{2}\}$, we have $|\pi^{-1}(x)|=1$. For $x\in E$ one has $|\pi^{-1}(x)|=2$. This immediately implies that $(X,\sigma)$ is minimal. Denote by $0^{+},0^{-}$ the preimages of $0$ under $\pi$ , then $0^{+},0^{-}$ are proximal
as they differ only at the zeroth coordinate. To be specific let us decide that $0^{+}(0)=1$ and $0^{-}(0)=0$. Let us equip the circle $\mathbb{S}^{1}$ with the anti-clockwise orientation. Given
two pairs of points $(x_{1},y_{1})$, $(x_{2},y_{2})$ with $|x_{i}-y_{i}|<\frac{1}{2}$,
we may thus compare their orientations. Define $U_{+}=\bigcap_{n=0}^{n=1}T^{-n}H_{0^{+}(n)}$
and $U_{-}=\bigcap_{n=0}^{n=1}T^{-n}H_{0^{-}(n)}$. Clearly $0^{+}\in U_{+}$
and $0^{-}\in U_{-}$ and $U_{+}\cap U_{-}=\emptyset$. Moreover there
exists some $\epsilon>0$ such that $[0,\epsilon)\subset\pi(U_{+})$
and $(1-\epsilon,1]\subset\pi(U_{-})$. Let $z\in X$. By minimality
$(0^{+},0^{-},0^{+},0^{-}),(0^{-},0^{+},0^{-},0^{+})\in C_{\Z}^{[2]}(X)$, where we use convention (\ref{eq:X2 identification}). Thus by proximality
of the pair $(0^{+},0^{-})$ it follows that $(0^{+},0^{-},z,z),(0^{-},0^{+},z,z)\in C_{\Z}^{[2]}(X)$.
Assume for a contradiction that $(X,{C_{\Z}^{\bullet}})$ is fibrant.
By Proposition \ref{prop:fibrant implies gluing}, gluing $(0^{+},0^{-},z,z)$ and $(0^{-},0^{+},z,z)$, we have $(0^{+},0^{-},0^{-},0^{+})\in C_{\Z}^{[2]}(X)$.
By definition of $C_{\Z}^{[2]}(X)$ , one may find sequences $w_{i},y_{i}\in X$
and $n_{i}\in\Z$ such that
\[
(w_{i},y_{i},\sigma^{n_{i}}w_{i},\sigma^{n_{i}}y_{i})\rightarrow_{i\rightarrow\infty}(0^{+},0^{-},0^{-},0^{+})
\]
Note that for big enough $i$, $(\pi(\sigma^{n_{i}}w_{i}),\pi(\sigma^{n_{i}}y_{i}))$
is oriented as $(\pi(w_{i}),\pi(y_{i}))$.
However $\pi(w_{i})\in[0,\epsilon)$
and $\pi(y_{i})\in(1-\epsilon,1]$, whereas $\pi(\sigma^{n_{i}}w_{i})\in(1-\epsilon,1]$
and $\pi(\sigma^{n_{i}}y_{i})\in[0,\epsilon)$. Contradiction.

\begin{figure}[h]
    \centering
    \includegraphics[width=1.5in]{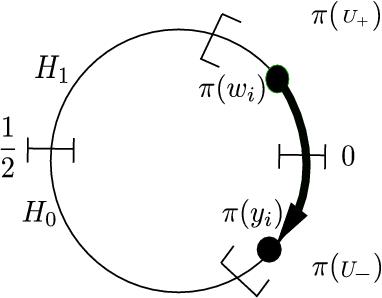}
    \caption{}
     \label{fig:circle}
\end{figure}
\end{example}

%\newpage

\section{Open questions}\label{sec:Open-Questions}
\subsection{Questions relating to $\NRP^{[d]}$}

In Theorem \ref{thm:strong structure}
one assumes that $G$ has a dense subgroup generated by a compact set.
We thus ask: \begin{question} For which groups $G$ does Theorem
\ref{thm:strong structure} hold? \end{question}

Note that given Theorem \ref{thm:maximal d factor}, this is equivalent
to the following question:
\begin{question} Let $d\geq2.$ For which
groups $G$ is the maximal factor of $(G,X)$ of order at most $d$
a pronilsystem? \end{question}
Note that for $d=1$, Remark \ref{rem:strong str d=00003D00003D1}
gives a complete solution to this question.

As an intermediate step one can try to answer the following question:

\begin{question}
Let $(G,X)$ be a minimal system of finite order. Is it uniquely ergodic?
\end{question}

By Theorem \ref{thm:maximal abelian factor}, for any minimal topological
dynamical system, $(G,X/\NRP^{[1]}(X))$ is the maximal abelian group
factor of $(G,X)$. Thus the following problem is natural:
\begin{problem}
Find an explicit description, for minimal topological
dynamical systems $(G,X)$, of the equivalence relation $\RR(X)$ such that $(G,X/\RR(X))$ is the maximal
(compact) group factor of $(G,X)$.
\end{problem}
\subsection{Questions relating to $\RP^{[d]}$}
The following questions refer to $\RP^{[d]}$ as defined in Section \ref{sec:diff gen}.
\begin{question}
Let $(G,X)$ be a minimal t.d.s where $G$ is not
abelian. Assume the Bronstein condition (see Subsection \ref{sub:Proximality-and-its})
holds or that $G$ is amenable. Is $\RP^{[d]}(X)$ an equivalence
relation for $d\geq2$? %Is it true that $\RP^{[d]}(X)=\NRP^{[d]}(X).$
\end{question}
Note that for $d=1$ the answer is known to be positive for the first question.

\begin{question}
Let $(G,X)$ be a minimal t.d.s, $d\in \mathbb{N}$ and $\RP^{[d]}(X)$ is an equivalence
relation. What can be said about the structure of $X/\RP^{[d]}(X)$ when $G$ is not abelain?
\end{question}

Note that by Lemma \ref{lem:elementary}, $X/\RP^{[d]}(X)$ is a distal system when
$\RP^{[d]}(X)$ is an equivalence relation.

\appendix\label{sec:Appendix}

\renewcommand{\thesection}{\hspace*{-8pt}}
%\section{}
\section{}
\renewcommand{\thesection}{\Alph{section}}
\renewcommand{\thelem}{\Alph{lemma}}

\subsection{Cube invariance\label{sub:cube invariance}}

We verify a claim made in Subsection \ref{sub:Nilspaces}:
\begin{prop}\label{prop ap:cube inv}
\label{prop:dynamical cubespaces are cubespaces} Let $(G,X)$ be
a topological dynamical system and let $(X,{C_{G}^{\bullet}})$ be
the dynamical cubespace induced by (G,X) . Then $(X,{C_{G}^{\bullet}})$
has cube invariance.\end{prop}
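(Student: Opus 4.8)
The plan is to reduce cube invariance to a purely combinatorial fact about the Host--Kra cube group. Given a morphism of discrete cubes $f:\{0,1\}^{d}\to\{0,1\}^{k}$, precomposition with $f$ gives a map $f^{*}:X^{[k]}\to X^{[d]}$, $c\mapsto c\circ f$, which is continuous since each coordinate of $c\circ f$ is a coordinate of $c$. Because $C_{G}^{[d]}(X)$ is closed and, by definition, $C_{G}^{[k]}(X)=\overline{\{g\,x^{[k]}\mid g\in\Hcal^{[k]},\ x\in X\}}$, the inclusion $f^{*}(\overline{A})\subseteq\overline{f^{*}(A)}$ valid for a continuous map reduces the claim $f^{*}(C_{G}^{[k]}(X))\subseteq C_{G}^{[d]}(X)$ to checking that $f^{*}(g\,x^{[k]})\in C_{G}^{[d]}(X)$ for every $g\in\Hcal^{[k]}$ and $x\in X$.

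The key observation is that $f^{*}(g\,x^{[k]})=(g\circ f)\,x^{[d]}$, where $g\circ f\in G^{[d]}$ denotes the element $\omega\mapsto g(f(\omega))$; this is immediate from $(g\,x^{[k]})(\eta)=g(\eta)x$ for all $\eta$. Since $g\mapsto g\circ f$ is visibly a group homomorphism $G^{[k]}\to G^{[d]}$ (coordinatewise multiplication is preserved), it suffices to show that this homomorphism carries $\Hcal^{[k]}$ into $\Hcal^{[d]}$, and for this it is enough to treat the generators $[t]_{F}$ of $\Hcal^{[k]}$, where $t\in G$ and $F$ is a hyperface of $\{0,1\}^{k}$.

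So fix $t\in G$ and a hyperface $F=\{\omega\in\{0,1\}^{k}\mid\omega_{i}=\a\}$. Then $([t]_{F}\circ f)(\omega)=t$ when $f_{i}(\omega)=\a$ and $=\Id$ otherwise, and one runs through the possibilities for the coordinate function $f_{i}$: if $f_{i}$ is the constant $\a$ then $[t]_{F}\circ f=[t]_{\{0,1\}^{d}}\in\Del^{[d]}(G)\subseteq\Hcal^{[d]}$; if $f_{i}$ is the other constant then $[t]_{F}\circ f=\Id$; if $f_{i}=\omega_{j}$ then $[t]_{F}\circ f=[t]_{\{\omega_{j}=\a\}}\in\Hcal^{[d]}$; and if $f_{i}=\overline{\omega_{j}}$ then $[t]_{F}\circ f=[t]_{\{\omega_{j}=1-\a\}}\in\Hcal^{[d]}$. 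In each case $[t]_{F}\circ f\in\Hcal^{[d]}$, so $g\circ f\in\Hcal^{[d]}$, whence $f^{*}(g\,x^{[k]})=(g\circ f)\,x^{[d]}$ lies in $\{g'x^{[d]}\mid g'\in\Hcal^{[d]},\ x\in X\}\subseteq C_{G}^{[d]}(X)$, completing the argument. There is no serious obstacle here; the only point requiring care is the degenerate case in the analysis above where $f_{i}$ is constant, which produces a diagonal element or the identity rather than a face generator of $\Hcal^{[d]}$.
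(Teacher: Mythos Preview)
Your proof is correct and follows essentially the same route as the paper's: both reduce to showing that precomposition by a discrete cube morphism $f$ takes $\Hcal^{[k]}$ into $\Hcal^{[d]}$, then verify this on the hyperface generators $[t]_{F}$ by noting that $[t]_{F}\circ f=[t]_{f^{-1}(F)}$ and that $f^{-1}(F)$ is either a hyperface, all of $\{0,1\}^{d}$, or empty. Your write-up is simply more explicit about the preliminary reduction (continuity of $f^{*}$, closedness of $C_{G}^{[d]}(X)$, the group-homomorphism property of $g\mapsto g\circ f$), which the paper leaves implicit in the word ``enough''.
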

\begin{proof}
From the definition of $C_{G}^{[d]}(X)$ in Equation (\ref{eq:def of Cn H})
in Subsection \ref{sub:Dynamical-cubespaces}, it is clearly
% it is
enough
to prove that for any $g\in\Hcal^{[d]}$ and morphism of discrete
cubes $f:\{0,1\}^{r}\to\{0,1\}^{d}$ we have $g\circ f\in\Hcal^{[r]}$.
We can assume without loss of generality that $g=[h]_{F}$ for $h\in G$
and $F=\{\o\in\{0,1\}^{d}|\ \o=a\}$ a hyperface of $\{0,1\}^{d}$,
where $a\in\{0,1\}$ and $t\in\{1,2,\ldots,d\}$. Let us write explicitly
$f=(f_{1},\ldots,f_{d})$ where $f_{j}(\o_{1},\ldots,\o_{r})$ equals
to either $0$, $1$, $\o_{i}$ or $\overline{o_{i}}=1-\o_{i}$ for some $1\le i=i(j)\le r$.
Denote $H=f^{-1}(F)$, then $H$ is the face of of $\{0,1\}^{r}$.
If $f_{t}\equiv a$, then $H=\{0,1\}^{r}$, if $f_{t}\equiv1-a,$
then $H=\emptyset$, otherwise $H$ is a hyperface of $\{0,1\}^{r}$.
We conclude $g\circ f=[h]_{H}\in\Hcal^{[r]}$.
\end{proof}

\subsection{Doubling\label{sub:doubling}}

Consider the morphisms of discrete cubes $\hat{\pi}_{i}:\{0,1\}^{d+1}\to\{0,1\}^{d}$,
$i=1,\ldots,d+1$ defined by

\[
\hat{\pi}_{i}(\epsilon_{1},\epsilon_{2},\ldots,\epsilon_{i},\ldots,\epsilon_{d+1})=(\epsilon_{1},\epsilon_{2},\ldots,\hat{\epsilon_{i}},\ldots,\epsilon_{d+1})
\]
Let $F^{s}_{i}=\{\o\in\{0,1\}^{s}|\ \o_{i}=1\}$. Note $\hat{\pi}_{i}^{-1}(F^{d}_{j})=F^{d+1}_{j}$ if $j<i$, and $\hat{\pi}_{i}^{-1}(F^{d}_{j})=F^{d}_{j+1}$ if $j>i$.
Define $D_{i}(f)(\omega)=f(\hat{\pi}_i(\omega))$ for $f\in \Fcal^{[d]}$ and $\omega\in \{0,1\}^{d+1}$.

\begin{lem}\label{lem:doubling}
$D_{i}(\Fcal^{[d]})\subset \Fcal^{[d+1]}$.
\end{lem}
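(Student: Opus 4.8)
The plan is to verify the inclusion on a generating set, exploiting that $D_i$ is a group homomorphism. First I would note that for \emph{any} map $\psi\colon\{0,1\}^{r}\to\{0,1\}^{d}$, precomposition $f\mapsto f\circ\psi$ is a homomorphism $G^{[d]}\to G^{[r]}$, because the group law on $G^{[d]}=G^{\{0,1\}^{d}}$ is pointwise. Extending $D_i$ in the obvious way from $\Fcal^{[d]}$ to all of $G^{[d]}$, this gives that $D_i\colon G^{[d]}\to G^{[d+1]}$ is a group homomorphism, so it suffices to check that $D_i$ maps each generator $[h]_{F}$ of $\Fcal^{[d]}$ into $\Fcal^{[d+1]}$, where $h\in G$ and $F$ ranges over the upper hyperfaces of $\{0,1\}^{d}$.

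Second, I would record the elementary identity $D_i([h]_F)=[h]_{\hat{\pi}_i^{-1}(F)}$. Indeed, for $\omega\in\{0,1\}^{d+1}$ one has $D_i([h]_F)(\omega)=[h]_F(\hat{\pi}_i(\omega))$, which equals $h$ exactly when $\hat{\pi}_i(\omega)\in F$, i.e.\ when $\omega\in\hat{\pi}_i^{-1}(F)$, and equals $\Id$ otherwise.

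Third, I would apply this with $F=F^{d}_{j}=\{\omega\in\{0,1\}^{d}:\omega_{j}=1\}$ for $1\le j\le d$, the upper hyperfaces of $\{0,1\}^{d}$. Since $\hat{\pi}_i$ deletes the $i$-th coordinate, the $j$-th coordinate of $\hat{\pi}_i(\omega)$ is $\omega_{j}$ when $j<i$ and $\omega_{j+1}$ when $j\ge i$; hence $\hat{\pi}_i^{-1}(F^{d}_{j})=F^{d+1}_{j}$ if $j<i$ and $\hat{\pi}_i^{-1}(F^{d}_{j})=F^{d+1}_{j+1}$ if $j\ge i$ (this is the computation indicated in the note preceding the lemma). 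In either case $\hat{\pi}_i^{-1}(F^{d}_{j})$ is an upper hyperface of $\{0,1\}^{d+1}$, so $D_i([h]_{F^{d}_{j}})=[h]_{F^{d+1}_{k}}\in\Fcal^{[d+1]}$ for the appropriate $k$. As the elements $[h]_{F^{d}_{j}}$ ($h\in G$, $1\le j\le d$) generate $\Fcal^{[d]}$ and $D_i$ is a homomorphism, we conclude $D_i(\Fcal^{[d]})\subseteq\Fcal^{[d+1]}$; in fact $D_i(\Fcal^{[d]})$ is the subgroup generated by $[G]_{F^{d+1}_{k}}$ with $k\ne i$.

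There is essentially no obstacle in this argument; the only point requiring a bit of care is the index bookkeeping for which upper hyperface $F^{d}_{j}$ pulls back to under $\hat{\pi}_i$, which is precisely what the note immediately before the statement spells out.
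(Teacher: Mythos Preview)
Your proof is correct and follows essentially the same approach as the paper: reduce to generators using that $D_i$ is a group homomorphism, then observe $D_i([h]_{F^d_j})=[h]_{\hat{\pi}_i^{-1}(F^d_j)}$ lands in $\Fcal^{[d+1]}$ since the preimage is an upper hyperface. Your write-up is more explicit (and you correctly handle the case $j=i$, which the note before the lemma omits by writing only $j<i$ and $j>i$).
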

\begin{proof}
By the definition of $\Fcal^{[d]}$ in Subsection \ref{sub:Host-Kra-cube-group}, it is enough to note for $h\in G$, $D_{i}([h]_{F^{d}_{j}})=[h]_{\hat{\pi}_{i}^{-1}(F^{d}_{j})}\in \Fcal^{[d+1]}$ for $j=1,\ldots,d$.
\end{proof}
In fact we see that  $D_{i}(f)$ consists of ``painting'' $f$ on $F_{i}^{d+1}$ and on
the corresponding parallel lower hyperspace $(F_{i}^{d+1})^{c}$. We refer
to this operation as \textbf{doubling along $F_{i}$}. Notice that
using our convention in Equation (\ref{eq:product}) of Subsection
\ref{sub:Discrete-cubes} we have $D_{d+1}(f)=f\times f$.

\begin{figure}[h]
\centering{}\begin{tikzpicture}
\begin{scope}[shift={(-2,0.7)}]         \singlesquare{$x_{00}$}{$x_{01}$}{$x_{10}$}{$x_{11}$}
\end{scope}
   \draw[->] (0,1.2) -- (1,1.2);
\begin{scope}[shift={(2,0)}]         \threecube{$x_{00}$}{$x_{01}$}{$x_{10}$}{$x_{11}$}{$x_{00}$}{$x_{01}$}{$x_{10}$}{$x_{11}$}{1}
 \begin{scope}[x={(1, 0)}, y={(0, 1)}, z={(0.352, 0.317)}, scale=2]           \draw[ultra thick] (0,0,0) -- (0,1,0);
         \draw[ultra thick] (1,0,0) -- (1,1,0);
        \draw[ultra thick] (0,0,1) -- (0,1,1);
       \draw[ultra thick] (1,0,1) -- (1,1,1);
     \end{scope}
 \end{scope}
\end{tikzpicture} \caption{Doubling along $F_{3}$.}
\end{figure}

\subsection{Pure ceiling and mixed upper faces}

Let $d\geq1$ and let $F$ be an \textit{upper} face (see Subsection
\ref{sub:Faces}). If $F$ is contained in the \textit{ceiling hyperface}
$F=\{\o\in\{0,1\}^{d}|\ \o_{_{d}}=1\}$ we call it \textbf{pure ceiling}.
Otherwise we call it \textbf{mixed}. Note there are $2^{d-1}$ pure
ceiling faces and $2^{d-1}$ mixed faces. Fix $g\in G$, pure ceiling
face $P$ and mixed face $M$. Note:

\begin{equation}
[g]_{P}=\Id^{[d-1]}\times[g]_{L_{1}}\label{eq:id times}
\end{equation}

\begin{equation}
[g]_{M}=[g]_{L_{2}}\times[g]_{L_{2}}\label{eq:g times g}
\end{equation}
where $L_{1},L_{2}$ are some upper faces of $\{0,1\}^{d-1}$.

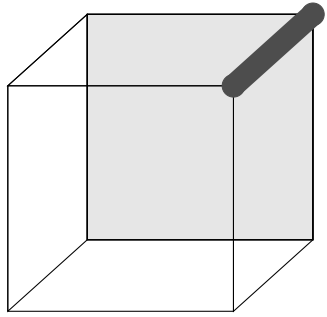
\begin{figure}[h]
\centering{}\begin{tikzpicture}
\begin{scope}
[x={(1, 0)}, y={(0, 1)}, z={(0.352, 0.317)}, scale=3]
 \draw[fill=black!10] (1,0,1) -- (0,0,1) -- (0,1,1) -- (1,1,1) -- cycle;
 \draw[thin] (1,0,1) -- (0,0,1) -- (0,1,1) -- (1,1,1) -- cycle;
 \draw[thin] (1,1,1) -- (0,1,1) -- (0,0,1) -- (1,0,1) -- cycle;
  \draw[thin] (0,0,0) -- (1,0,0) -- (1,1,0) -- (0,1,0) -- cycle;
   \draw[thin] (1,1,1) -- (1,1,0) -- (1,0,0) -- (1,0,1) -- cycle;
   \draw[thin] (1,1,1) -- (0,1,1) -- (0,1,0) -- (1,1,0) -- cycle;
   \draw[thin] (0,0,0) -- (0,0,1) -- (0,1,1) -- (0,1,0) -- cycle;
   \draw[line width=0.3cm,black!70] (1,1,0) -- (1,1,1);
  \draw[fill=black!70,black!70] (1,1,1) circle[radius=0.05cm];
  \draw[fill=black!70,black!70] (1,1,0) circle[radius=0.05cm];
\end{scope}
  \end{tikzpicture} \caption{An example of a pure ceiling face and a mixed face.}
\end{figure}

\subsection{Lower central series induced representation for the Host-Kra cube
group \label{sub:Lower-central-series}}

\noindent Let $G$ be a group. Set $G=G_{0}=G_{1}$ and define inductively
$G_{i+1}=[G,G_{i}]$, where for $A,B\subset G$, $[A,B]$ is the group
generated by the commutators $[a,b]$, $a\in A$, $b\in B$. The sequence

\[
G=G_{0}=G_{1}\supseteq G_{2}\supseteq\ldots
\]
is called the \textbf{lower central series} of $G$.

If $F$ is a face of codimension $d$ and $d_{1},d_{2}$ are positive
integers with $d_{1}+d_{2}=d$ then we can find faces $F_{1}$ and
$F_{2}$ of codimension $d_{1}$ and $d_{2}$, respectively, such
that $F_{1}\cap F_{2}=F$. Note the following key equality:
\begin{equation}
[[g_{1}]_{F_{1}},[g_{2}]_{F_{2}}]=[[g_{1},g_{2}]]_{F}\label{eq:key commutator}
\end{equation}
We conclude that the Host-Kra cube group $\Hcal^{[d]}$ is generated
by $[G_{\co(F)}]_{F}$ where $F$ ranges over all faces of $\{0,1\}^{d}$.

\subsection{The pure ceiling-mixed decomposition\label{sub:HK decomposition}}
\begin{lem}
\noindent \label{lem:F representation}%
Let $d\geq1$, and fix an ordering $<$ on $S_{1}=\{\vec{1}\},S_{2},\dots,S_{2^{d}}=\{0,1\}^{d}$
of the upper faces that respects inclusion, i.e.~if $S_{i}\subsetneq S_{j}$
then $S_{i}<S_{j}$. Then any element $g\in\Fcal^{[d]}$ has a representation
as an ordered product $[x_{1}]_{S_{1}}[x_{2}]_{S_{2}}\cdots[x_{2^{d}}]_{S_{2^{d}-1}}$where
for $1\le i\le2^{d}-1$, $x_{i}\in G$ and $[x_{i}]_{S_{i}}\in\Fcal^{[d]}$. \end{lem}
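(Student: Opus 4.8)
The plan is to prove the lemma by a \emph{collection process}, rewriting an arbitrary word in the hyperface generators of $\Fcal^{[d]}$ into the prescribed ordered form. The only structural ingredient is the commutator identity $(\ref{eq:key commutator})$. First I would record, as in the discussion following $(\ref{eq:key commutator})$ but for upper faces, that $[G_{\co F}]_{F}\subseteq\Fcal^{[d]}$ for every upper face $F$ of $\{0,1\}^{d}$ of codimension $\ge1$: writing such an $F$ as $F'\cap F_{i}$, where $F'$ fixes all but one of the coordinates of $F$ and $F_{i}$ is the upper hyperface fixing the remaining one, identity $(\ref{eq:key commutator})$ gives $[[G_{\co F-1}]_{F'},[G]_{F_{i}}]=[[G_{\co F-1},G]]_{F}=[G_{\co F}]_{F}$, which lies in $\Fcal^{[d]}$ by induction on $\co F$. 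Hence it suffices to produce, for a given $\gamma\in\Fcal^{[d]}$, a representation $\gamma=[x_{1}]_{S_{1}}\cdots[x_{2^{d}-1}]_{S_{2^{d}-1}}$ with $x_{i}\in G_{\co S_{i}}$ for every $i$; all factors of such a product then automatically lie in $\Fcal^{[d]}$.

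Starting from $\gamma$ written as a finite word in the hyperface generators $[g]_{F_{i}}$, I would normalise it by repeatedly applying two moves: merging adjacent factors attached to the same face, $[a]_{F}[b]_{F}=[ab]_{F}$; and, whenever a pair $[a]_{F}[b]_{F'}$ with $F'<F$ occurs, transposing it by means of $(\ref{eq:key commutator})$ into $[[a,b]]_{F\cap F'}[b]_{F'}[a]_{F}$ (with the convention $[x,y]=xyx^{-1}y^{-1}$; the other convention merely moves the correction to the other side), thereby pushing lower faces to the left. Two observations are immediate. Since $[G_{i},G_{j}]\subseteq G_{i+j}$ and $\co(F\cap F')\le\co F+\co F'$, the commutator produced in a transpose satisfies $[a,b]\in G_{\co F+\co F'}\subseteq G_{\co(F\cap F')}$, so the invariant ``every factor $[x]_{S}$ of the current word has $x\in G_{\co S}$'' is preserved throughout; in particular no factor of codimension $0$ is ever created, so only the slots $S_{1},\dots,S_{2^{d}-1}$ occur. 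Moreover $F\cap F'$ is strictly contained in both $F$ and $F'$, hence strictly precedes both in the order $<$.

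The crucial point is that this process terminates, which I would establish by downward induction on codimension, combined with an outer induction on the length of the starting word. A factor attached to the first slot $S_{1}=\{\vec{1}\}$ can only move further left, and since $\vec{1}$ lies in every upper face, transposing it past another factor produces a correction again attached to $\{\vec{1}\}$, absorbed by an immediate merge; such factors are inert. Assuming that any collection involving only factors of codimension $>k$ terminates, moving a codimension-$k$ factor into its slot makes finitely many leftward moves, each of which either merges (creating nothing new) or, when transposing past an incomparable face, spawns a single correction of codimension strictly greater than $k$; collecting those corrections terminates by the inductive hypothesis, and since codimension is bounded by $d$ the induction closes. An outer induction on the word length (a word of length $m$ being a collected word of length $m-1$ followed by one hyperface generator, which is then inserted into its slot) then yields the collected form $[x_{1}]_{S_{1}}\cdots[x_{2^{d}-1}]_{S_{2^{d}-1}}$ with $x_{i}\in G_{\co S_{i}}$, which by the first paragraph is exactly the assertion.

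The step I expect to demand the most care is the termination argument: organising the nested induction (codimension on the inside, word length on the outside) so that no infinite regress can occur, and tracking precisely which codimension each spawned correction lands in so that the downward induction on codimension is legitimate. By contrast, the algebraic identities, the inclusion $[G_{\co F}]_{F}\subseteq\Fcal^{[d]}$, and the preservation of the property $x_{i}\in G_{\co S_{i}}$ are routine consequences of $(\ref{eq:key commutator})$ and the standard fact $[G_{i},G_{j}]\subseteq G_{i+j}$.
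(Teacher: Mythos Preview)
Your approach is essentially the same as the paper's: both run a collection (sorting) process based on the commutator identity $(\ref{eq:key commutator})$, swapping an out-of-order pair $[a]_{F}[b]_{F'}$ (with $F'<F$) at the cost of a correction on $F\cap F'$, and then arguing termination. The paper's sketch organises this slot by slot---move all $[G]_{S_{2^{d}-1}}$ factors to the far right, then all $[G]_{S_{2^{d}-2}}$ factors next to them, and so on---and the stronger fact that one may take $x_i\in G_{\co S_i}$, which you track explicitly, is exactly what the paper records in its footnote.

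One expositional point: your termination induction is on \emph{codimension}, but the given order $<$ need only respect inclusion, so incomparable faces of different codimensions can sit in either order; in particular a codimension-$k$ face need not occupy a slot to the right of every codimension-$(>k)$ face, and ``moving a codimension-$k$ factor into its slot by leftward moves'' is not quite the correct picture. This is not a real gap---the commutator bound $\co(F\cap F')>\max(\co F,\co F')$ for incomparable $F,F'$ that you state is exactly what is needed---but the cleanest way to close the argument is to induct on the slot index $j$ (as the paper does) rather than on codimension: at stage $j$ push all $[G]_{S_j}$ factors to position $j$ from the right; each transpose produces a correction on some $S_{j'}$ with $j'<j$, to be handled at a later stage, and each individual push terminates because the suffix to the right of the moving factor strictly shortens.
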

\begin{proof}
This is essentially proven in \cite[Proposition A.5]{GMVI} (see also
\cite[Appendix E]{GT10})\footnote{One can actually prove that $\Hcal^{[d]}=[G_{\co(S_{1})}]_{S_{1}}[G_{\co(S_{2})}]_{S_{2}}\cdots[G_{\co(S_{2^{d}})}]_{S_{2^{d}}}$, where $G=G_{0}=G_{1}\supseteq G_{2}\supseteq\ldots$ is the lower
central series of $G$ (see Subsection \ref{sub:Lower-central-series}).
In addition the induced representation for elements in $\Hcal^{[d]}$
is unique but we will not need these facts.}. Let us sketch the proof. Fix $S_{i}<S_{j}$ and $g,h\in G$ with.
By (\ref{eq:key commutator}) we have $[g]_{S_{j}}[h]_{S_{i}}=[g,h]_{S_{i}\cap S_{j}}[h]_{S_{i}}[g]_{S_{j}}$
where $[g,h]=ghg^{-1}h^{-1},$ as $[g,h]_{S_{i}\cap S_{j}}=[[g]_{S_{j}},[h]_{S_{i}}]$.
In other words

\begin{equation}
[G]_{S_{j}}[G]_{S_{i}}\subset[G]_{S_{k}}[G]_{S_{i}}[G]_{S_{j}}\label{eq:commutator}
\end{equation}
for some $S_{k}$ for which $S_{k}\leq S_{i}$ and $S_{k}<S_{j}$.
By definition any $g\in\Fcal^{[d]}$ is of the form $\prod_{j=1}^{m}[t_{j}]_{F_{j}}$
where $F_{j}$ is an upper hyperface and $t_{j}\in G$. Thus one can
use (\ref{eq:commutator}) to move all occurrences of elements of
the form $[G]_{S_{2^{d}-1}}$ to the far right, then move all occurrences
of elements of the form $[G]_{S_{2^{d}-2}}$ to be adjacent to $[G]_{S_{2^{d}-1}}$, and so on so as to establish $g=[x_{1}]_{S_{1}}[x_{2}]_{S_{2}}\cdots[x_{2^{d}}]_{S_{2^{d}-1}}$.

\end{proof}
\begin{prop}
\label{prop:H=00003Dpure ceiling mixed}Let $G$ be a group and $d\geq1$.
If $g\in\Hcal^{[d]}$ then there are elements $(s\times s)\in\Hcal^{[d]}$
and $(\id^{[d-1]}\times h)\in\Fcal^{[d]}$ such that $g=(\id^{[d-1]}\times h)(s\times s)$
for some $h,s\in G^{[d-1]}$.
\end{prop}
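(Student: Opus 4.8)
The plan is to prove Proposition \ref{prop:H=00003Dpure ceiling mixed} by combining the ordered-product representation from Lemma \ref{lem:F representation} with the distinction between pure ceiling and mixed upper faces introduced in Subsection "Pure ceiling and mixed upper faces". First I would recall that by Proposition \ref{prop:H=00003DFDiag} we may write $g = \gamma \cdot (t,t,\dots,t)$ with $\gamma\in\Fcal^{[d]}$ and $(t,\dots,t)=\Del^{[d]}(t)\in\Del^{[d]}(G)$; so it suffices to establish the claim for $g\in\Fcal^{[d]}$, since $\Del^{[d]}(t)$ visibly has the form $(s'\times s')$ with $s'=\Del^{[d-1]}(t)$, and a product of two elements of the shape $(s_1\times s_1)$ and $(s_2\times s_2)$ is again of that shape, while $\Del^{[d]}(t)\in\Hcal^{[d]}$.

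Next, for $g\in\Fcal^{[d]}$ I would apply Lemma \ref{lem:F representation} to write $g = [x_1]_{S_1}[x_2]_{S_2}\cdots[x_{2^d-1}]_{S_{2^d-1}}$ as an ordered product over the upper faces $S_i \subsetneq \{0,1\}^d$, with the ordering respecting inclusion. Crucially, the ceiling hyperface $F_c = \{\o : \o_d = 1\}$ is the largest upper face strictly contained in $\{0,1\}^d$ containing all pure ceiling faces, so in any inclusion-respecting order all the pure ceiling $S_i$ (those contained in $F_c$) can be taken to come \emph{after} all the mixed ones — more precisely, I would choose the ordering so that the mixed faces precede the pure ceiling faces (this is consistent with inclusion since a mixed face is never contained in a pure ceiling face). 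Then $g = \mu \cdot \rho$ where $\mu$ is a product of terms $[x_i]_{S_i}$ with $S_i$ mixed and $\rho$ is a product of terms $[x_j]_{S_j}$ with $S_j$ pure ceiling.

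By Equation (\ref{eq:g times g}), each mixed generator satisfies $[x_i]_{S_i} = [x_i]_{L_i}\times[x_i]_{L_i}$ for an upper face $L_i$ of $\{0,1\}^{d-1}$, hence $\mu = s\times s$ for some $s\in\Fcal^{[d-1]}\subset G^{[d-1]}$ (the product of the $[x_i]_{L_i}$); note $\mu\in\Hcal^{[d]}$ since $\mu\in\Fcal^{[d]}$. By Equation (\ref{eq:id times}), each pure ceiling generator satisfies $[x_j]_{S_j} = \id^{[d-1]}\times[x_j]_{L_j'}$, hence $\rho = \id^{[d-1]}\times h$ for some $h\in\Fcal^{[d-1]}\subset G^{[d-1]}$, and $\rho\in\Fcal^{[d]}$. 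This gives $g = (s\times s)(\id^{[d-1]}\times h)$. Finally I would reconcile the order with the statement, which wants $g = (\id^{[d-1]}\times h')(s\times s)$: since $(s\times s)(\id^{[d-1]}\times h) = (\id^{[d-1]}\times h'')(s\times s)$ with $h'' = \pi_c(s)\,h\,\pi_c(s)^{-1}$ (conjugation within the ceiling copy of $G^{[d-1]}$, because the two factors only interact on the ceiling coordinates), we are done; alternatively one re-runs Lemma \ref{lem:F representation} with the opposite choice of ordering, putting pure ceiling faces first. The only mildly delicate point — and the step I expect to require the most care — is verifying that an inclusion-respecting order with all mixed faces before all pure ceiling faces genuinely exists; this follows because $S$ mixed and $S'$ pure ceiling forces $S\not\subseteq S'$ (a mixed face contains a vertex with last coordinate $0$, which no pure ceiling face does), so declaring every mixed face smaller than every pure ceiling face is compatible with the partial order of inclusion, and any linear extension refining both this and inclusion within each class works.
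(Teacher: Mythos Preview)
Your overall strategy matches the paper's: reduce to $\Fcal^{[d]}$ via Proposition~\ref{prop:H=00003DFDiag}, then use Lemma~\ref{lem:F representation} with an ordering that separates pure ceiling from mixed upper faces, and finally invoke equations~(\ref{eq:id times}) and~(\ref{eq:g times g}). However, there is a genuine error in your main line of argument.

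You want an inclusion-respecting ordering in which every mixed face precedes every pure ceiling face. No such ordering exists. The check you perform --- that a mixed face $S$ is never contained in a pure ceiling face $S'$ --- is correct but irrelevant: what you need to rule out is $S' \subsetneq S$ with $S'$ pure ceiling and $S$ mixed, since that inclusion would force $S' < S$ (pure ceiling before mixed), contradicting your declaration. And such inclusions abound: for instance $\{\vec{1}\}$ is pure ceiling and is contained in \emph{every} upper face, mixed ones included (concretely in $d=2$, $\{11\} \subsetneq \{10,11\}$). So Lemma~\ref{lem:F representation} cannot be applied with your ordering, and the decomposition $g = \mu\cdot\rho$ with $\mu$ mixed, $\rho$ pure ceiling, is not justified.

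The fix is precisely your ``alternative'': put all pure ceiling faces \emph{before} all mixed faces. This \emph{is} compatible with inclusion, because a pure ceiling face can never contain a mixed face (a mixed face has a vertex with last coordinate $0$). This is exactly what the paper does, and with this ordering the product already comes out in the form $(\id^{[d-1]}\times h)(s'\times s')$, so your conjugation step becomes unnecessary. In short: swap the roles of ``before'' and ``after'' in your ordering, drop the conjugation, and your proof becomes the paper's proof.
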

By Proposition \ref{prop:H=00003DFDiag} there are $f\in\Fcal^{[d]}$
and $h\in G$ so that $g=f[h]_{\{0,1\}^{d}}$. Fix an ordering $<$
on $S_{1}=\{\vec{1}\},S_{2},\dots,S_{2^{d}}=\{0,1\}^{d}$ of the upper
faces that respects inclusion, i.e.~if $S_{i}\subsetneq S_{j}$ then
$S_{i}<S_{j}$. Moreover assume that if $P$ is a pure ceiling upper
face and $M$ is a mixed upper face then $P<M$ (this is possible
as a pure ceiling upper face cannot contain a mixed upper face). By
Lemma \ref{lem:F representation} we may write:
\begin{equation}
\underbrace{f=[x_{1}]_{S_{1}}[x_{2}]_{S_{2}}\cdots[x_{2^{[d-1]}}]_{S_{2^{d-1}}}}_{\mathrm{{pure}\ ceiling}}\underbrace{[x_{2^{d-1}+1}]_{S_{2^{d-1}+1}}[x_{2^{d-1}+2}]_{S_{2^{d-1}+2}}\cdots[x_{2^{d}}]_{S_{2^{d}-1}}}_{\mathrm{{mixed}}}\label{eq:pure mixed decomposition}
\end{equation}
Note that by Equation (\ref{eq:id times}) the product $[x_{1}]_{S_{1}}[x_{2}]_{S_{2}}\cdots[x_{2^{[d-1]}}]_{S_{2^{d-1}}}\in\Fcal^{[d]}$
is of the form $(\id^{[d-1]}\times h)$ where $h\in G^{[d-1]}$, whereas
by Equation (\ref{eq:g times g}) the product $[x_{2^{d-1}+1}]_{S_{2^{d-1}+1}}[x_{2^{d-1}+2}]_{S_{2^{d-1}+2}}\cdots[x_{2^{d}}]_{S_{2^{d}}}\in\Fcal^{[d]}$
is of the form $(s'\times s')$ where $s'\in G^{[d-1]}$.

\subsection{Elementary properties of $\NRP^{[d]}(X)$.}
\begin{lem}
\label{lem:elementary}Let $(G,X)$ be a minimal t.d.s then:\end{lem}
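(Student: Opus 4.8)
The plan is to treat the items one at a time; after unwinding the definitions each reduces to an easy manipulation of discrete cubes and coordinatewise semigroup actions, with a single exception noted below. Throughout I will use that $C_G^{[d]}(X)$ is closed and, by Proposition \ref{prop:dynamical cubespaces are cubespaces}, invariant under composition with morphisms of discrete cubes, and that every element of $\Hcal^{[d]}$ (hence of the closure of $\Hcal^{[d]}$ in $E(\Hcal^{[d]},C_G^{[d]}(X))$) acts on configurations coordinatewise.

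For the monotonicity statements I would restrict $\llcorner^{[d+2]}(x,y)$ to the hyperface $\{\o\in\{0,1\}^{d+2}:\o_{d+2}=1\}$: the map $\{0,1\}^{d+1}\to\{0,1\}^{d+2}$ parametrizing it is a morphism of discrete cubes carrying $\vec 1$ to $\vec 1$, so cube invariance gives $\llcorner^{[d+1]}(x,y)\in C_G^{[d+1]}(X)$, i.e.\ $\NRP^{[d+1]}(X)\subseteq\NRP^{[d]}(X)$. Restricting a sequence witnessing $(x,y)\in\RP^{[d+1]}(X)$ to the sub-cube $\{\o:\o_{d+1}=0\}$ (under which $\Fcal^{[d+1]}$ maps onto $\Fcal^{[d]}$) gives $\RP^{[d+1]}(X)\subseteq\RP^{[d]}(X)$ the same way, and for $d=1$, where $\Fcal^{[1]}=\{(\Id,g):g\in G\}$, Definition \ref{def:gen RPd} unwinds verbatim to the definition of $\Q(X)$, so $\RP^{[1]}(X)=\Q(X)$.

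For the functoriality statements I would push forward along a dynamical morphism $\pi:(G,X)\to(G,Y)$: the continuous map $\pi^{[d+1]}$ intertwines the $\Hcal^{[d+1]}$-actions and sends $x^{[d+1]}$ to $\pi(x)^{[d+1]}$, hence $\pi^{[d+1]}(C_G^{[d+1]}(X))\subseteq C_G^{[d+1]}(Y)$; since $\pi^{[d+1]}(\llcorner^{[d+1]}(x,y))=\llcorner^{[d+1]}(\pi x,\pi y)$, this yields $\pi\times\pi(\NRP^{[d]}(X))\subseteq\NRP^{[d]}(Y)$, and the same computation with $\pi^{[d]}$ and $\Fcal^{[d]}$ gives it for $\RP^{[d]}$. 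For the lower central series statement, the conclusion of Subsection \ref{sub:Lower-central-series} (which comes from the key commutator identity (\ref{eq:key commutator})) says $\Hcal^{[d+1]}$ contains $[G_{d+1}]_{\{\vec 1\}}$; hence for $h\in G_{d+1}$, $[h]_{\{\vec 1\}}x^{[d+1]}=\llcorner^{[d+1]}(x,hx)\in C_G^{[d+1]}(X)$, so $(x,hx)\in\NRP^{[d]}(X)$. If moreover $G=[G,G]$ then $G_{d+1}=G$, so $\{x\}\times Gx\subseteq\NRP^{[d]}(X)$ for every $x$, and taking closures and using minimality gives $\NRP^{[d]}(X)=X\times X$.

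The hard part is $\PP(X)\subseteq\NRP^{[d]}(X)$ (which then forces $\PP(X)=\Delta$ whenever $(G,X)$ is of finite order), the delicate point being that one needs an idempotent with a prescribed value rather than an arbitrary one. Given $(x,y)\in\PP(X)$, the set $\{p\in E(G,X):px=py\}$ is a closed left ideal, so it contains a minimal left ideal $L$; since $X$ is minimal, Proposition \ref{prop:Ellis fundamentals}(2) gives an idempotent $u\in L$ with $ux=x$, whence also $uy=ux=x$. Writing $F_1,\dots,F_{d+1}$ for the lower hyperfaces of $\{0,1\}^{d+1}$, each $[G]_{F_j}$ lies in $\Hcal^{[d+1]}$, so, choosing the approximating nets step by step as in the proof of Lemma \ref{lem:idempotent}, the product $q=[u]_{F_1}\cdots[u]_{F_{d+1}}$ — which, using $u^2=u$, acts coordinatewise as $u$ off $\vec 1$ and as the identity at $\vec 1$ — lies in $E(\Hcal^{[d+1]},C_G^{[d+1]}(X))$. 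Applying $q$ to $y^{[d+1]}\in C_G^{[d+1]}(X)$ yields $q\,y^{[d+1]}=\llcorner^{[d+1]}(uy,y)=\llcorner^{[d+1]}(x,y)$, so $\llcorner^{[d+1]}(x,y)\in C_G^{[d+1]}(X)$ and $(x,y)\in\NRP^{[d]}(X)$. For $\PP(X)\subseteq\RP^{[d]}(X)$ I would run the same device with the upper hyperface generators inside $\Fcal^{[d]}$: equivalently, the proof of Lemma \ref{lem:idempotent} produces a net $f_i\in\Fcal^{[d]}$ converging coordinatewise to $u_{*}^{[d]}$ on all of $X^{[d]}$, and then, with $z:=ux=uy$, one has $f_ix^{[d]}\to(x,z_{*}^{[d]})$ and $f_iy^{[d]}\to(y,z_{*}^{[d]})$, exactly a witness for $(x,y)\in\RP^{[d]}(X)$.
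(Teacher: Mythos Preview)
Your argument is correct. The treatment of monotonicity, of the functoriality under dynamical morphisms, and of the lower central series statement is essentially the paper's (the paper phrases the first via the ceiling projection rather than cube invariance, but this is the same map); the only point you glossed over is the passage from $h\in G_{d+1}$ to $h\in\overline{G_{d+1}}$ in item~(4), but this is immediate from the closedness of $\NRP^{[d]}(X)$ and continuity of the action.

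Where you genuinely diverge from the paper is in the inclusion $\PP(X)\subseteq\NRP^{[d]}(X)$ (and analogously $\PP(X)\subseteq\RP^{[d]}(X)$). The paper argues by induction on $d$: the base case is Proposition~\ref{prop:Q subset RP1}, and the inductive step doubles $\llcorner^{[d+1]}(x,y)$ into $C_G^{[d+2]}(X)$ and then pushes one half to the constant $x^{[d+1]}$ using a sequence $g_i\in G$ with $g_ix\to x$, $g_iy\to x$ (which exists by minimality and proximality). You instead give a one--shot enveloping semigroup proof: from $(x,y)\in\PP(X)$ and minimality you locate a minimal idempotent $u$ with $ux=uy=x$, and then, exactly as in Lemma~\ref{lem:idempotent} but using \emph{lower} hyperfaces (which lie in $\Hcal^{[d+1]}$ though not in $\Fcal^{[d+1]}$), you realize the map ``$u$ off $\vec 1$, identity at $\vec 1$'' inside $E(\Hcal^{[d+1]},C_G^{[d+1]}(X))$ and apply it to $y^{[d+1]}$. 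This handles all $d$ simultaneously and is very much in the spirit of the rest of the paper, at the modest cost of invoking the structure of minimal left ideals (Proposition~\ref{prop:Ellis fundamentals}(2)) rather than just the elementary sequence argument the paper uses. The $\RP^{[d]}$ variant works for the same reason, since the iterative limit in Lemma~\ref{lem:idempotent} is taken in $X^X$ coordinatewise and hence applies simultaneously to $x^{[d]}$ and $y^{[d]}$.
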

\begin{enumerate}
\item $\PP(X)\subseteq\cdots\subseteq\NRP^{[d+1]}(X)\subseteq\NRP^{[d]}(X)$ for each $d\in\N$.
\item
$\PP(X)\subseteq\cdots\subseteq\RP^{[d+1]}(X)\subseteq\RP^{[d]}(X)$ for each $d\in\N$.
\item If $\NRP^{[d]}(X)=\Del$ for some $d\geq1$ then $(G,X)$ is distal.
\item If $G_{d+1}$ denotes the $(d+1)$-th element of the lower central
series of $G$, then $(x,hx)\in\NRP^{[d]}(X)$ for any $h\in\overline{G_{d+1}}$.
Hence if $G$ is perfect, that is $G=[G,G]$, then $\NRP^{[d]}(X)=X\times X$
for all $d\geq1$.
\item If $\pi:(G,X)\to(G,Y)$ is a dynamical morphism then $\pi\times\pi(\NRP^{[d]}(X))\subseteq\NRP^{[d]}(Y)$.\end{enumerate}
\begin{proof}
\begin{enumerate}
\item As $\pi_{f}(C_{G}^{[d+2]}(X))=C_{G}^{[d+1]}(X)$ it follows directly
from Definition \ref{def:general RPd} that $\NRP^{[d+1]}(X)\subseteq\NRP^{[d]}(X)$
for $d\geq1$. By Proposition \ref{prop:Q subset RP1}, $\PP(X)\subset\NRP^{[1]}(X)$. We now
proceed by induction to show that $\PP(X)\subseteq \NRP^{[d]}(X)$ for each $d\in\N$. Let $(x,y)\in\PP(X)$. Assume $(x,y)\in\NRP_{}^{[d]}(X)$
which implies $\llcorner^{[d+1]}(x,y)\in C_{G}^{[d+1]}(X)$. By cube
invariance (see Subsection \ref{sub:Nilspaces}), $c\triangleq(\llcorner^{[d+1]}(x,y),\llcorner^{[d+1]}(x,y))\in C_{G}^{[d+2]}(X)$.
Let $F=\{\o\in\{0,1\}^{d+2}:\o_{d+2}=0\}$. Note $c_{|F}=\llcorner^{[d+1]}(x,y)$.
As $(G,X)$ is minimal and $(x,y)\in\PP(X)$ one may find a sequence
$g_{i}\in G$ such that $g_{i}x\to x$ and $g_{i}y\to x$. Conclude
$([g_{i}]_{F})c=(g_{i}^{[d+1]}\llcorner^{[d+1]}(x,y),\llcorner^{[d+1]}(x,y))\to\llcorner^{d+2}(x,y)\in C_{G}^{[d+2]}(X)$
which implies $(x,y)\in\NRP_{}^{[d+1]}(X)$ as desired.
\item
Consider the floor map $\pi_{f}:G^{[d+1]}\to G^{[d]}$ from Subsection \ref{sub:Discrete-cubes}. Let us denote its restriction to $\Fcal^{[d+1]}$ by $\phi_{f}$. Clearly, $\phi_{f}(\Fcal^{[d+1]})=\Fcal^{[d]}$. It follows from Definition \ref{def:gen RPd} that $\RP^{[d+1]}(X)\subseteq\RP^{[d]}(X)$
for each $d\in\N$. Now we show that $\PP(X)\subseteq \RP^{[d]}(X)$ for each $d\in\N$. It follows by the definition that
$\PP(X)\subset\RP{}^{[1]}(X)$ as $(id,g,id,g)\in \Fcal^{[2]}$ for each $g\in G$.
% and there are $g_i\in G$ such that $g_ix\rightarrow x$ and $g_iy\rightarrow x$.
We now proceed by induction. Let $(x,y)\in\PP(X)$ and assume $(x,y)\in \RP{}^{[d]}(X)$ which implies that
there are sequences $f_{i}\in\Fcal^{[d]}$, $x_i,y_i\in X$ with and $a_{*}\in X_{*}^{[d]}$ with
$(f_{i}x_i^{[d]},f_{i}y_i^{[d]})\rightarrow((x,a_{*}),(y,a_{*}))$. As part of the induction one may assume $x_i=x,y_i=y$ for all $i$. Since $(x,y)\in \PP(X)$ and $(X,G)$ is minimal,
there are $g_i\in G$ such that $g_ix\rightarrow x$ and $g_iy\rightarrow x$. There is a subsequence $\{n_i\}$
such that $g^{[d]}_{n_i}f_{n_i}x^{[d]}\rightarrow (x,b_*)$ and $g^{[d]}_{n_i}f_{n_i}y^{[d]}\rightarrow (x,b_*)$,
here $b_*=\lim g_{n_i}a_*$. Thus
$$(id^{[d]},g_{n_i}^{[d]})\cdot (f_{n_i},f_{n_i})(x^{[d+1]})=(f_{n_i}x^{[d]}, g_{n_i}^{[d]}f_{n_i}x^{[d]})\rightarrow (x,a_*,x,b_*)$$
and
$$ (id^{[d]},g_{n_i}^{[d]})\cdot (f_{n_i},f_{n_i})(y^{[d+1]})=(f_{n_i}y^{[d]}, g_{n_i}^{[d]}f_{n_i}y^{[d]})\rightarrow (y,a_*,x,b_*).$$
It is clear that $(id^{[d]},g_{n_i}^{[d]})\cdot (f_{n_i},f_{n_i})\in \Fcal^{[d+1]}$, and the result follows.

\item By (1) $\NRP^{[d]}(X)=\Del$ implies $\PP(X)=\Del$.

\item Follows as $\Hcal^{[d]}$ is generated by $[G_{\co(F)}]_{F}$ where
$F$ ranges over all faces of $\{0,1\}^{d}$ (see Subsection \ref{sub:Lower-central-series}).

\item Follows directly from Definition \ref{def:general RPd}.
\end{enumerate}
\end{proof}

For the next proposition recall the discussion in Example \ref{ex:weakly mixing}.

\begin{prop}\label{prop:weakly mixing}
Let $(G,X)$ be a minimal t.d.s which is transitive of all orders, then:
\begin{enumerate}
\item For all $x\in X$ and $d\in \mathbb{N}$, $Y_{x}^{[d]}(X)={x}\times X_{*}^{[d]}$.
\item For all $x\in X$ and $d\in \mathbb{N}$, $Y_{x}^{[d]}(X)=C_{x}^{[d]}(X)$.
\item For all $x\in X$ and $d\in \mathbb{N}$, $\NRP^{[d]}(X)=X\times X$.
\end{enumerate}
\end{prop}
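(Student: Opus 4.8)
The plan is to prove the three statements in order, using transitivity of all orders to control the diagonal action on cube spaces. For (1), the key point is that $Y_x^{[d]}(X) = \overline{\Fcal^{[d]}(x^{[d]})}$ and that, by Proposition \ref{prop:Yx is minimal}, $(\Fcal^{[d]}, Y_x^{[d]}(X))$ is minimal. I would argue that transitivity of all orders of $(G,X)$ forces the $\Fcal^{[d]}$-action on $\{x\}\times X_*^{[d]}$ to be \emph{transitive}: indeed, the face cube group $\Fcal^{[d]}$ contains, for each hyperface $F_i = \{\o : \o_i = 1\}$, the whole group $[G]_{F_i}$, and by taking products over the $d$ upper hyperfaces one can move the $2^d - 1$ non-$\vec 0$ coordinates in an ``almost independent'' way. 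More precisely, the $2^d-1$ coordinates $\o \neq \vec 0$ are governed by the diagonal actions of $G$ along the upper hyperfaces, and transitivity of $\Del^n(G)$ for all $n$ — combined with the fact that each $\o\neq\vec 0$ lies in at least one upper hyperface — shows that the orbit of $x_*^{[d]}$ under $\Fcal^{[d]}$ is dense in $X_*^{[d]}$. Hence $Y_{x*}^{[d]}(X) = X_*^{[d]}$, and since the $\vec 0$-coordinate of every element of $\Fcal^{[d]}(x^{[d]})$ is fixed at $x$, we get $Y_x^{[d]}(X) = \{x\}\times X_*^{[d]}$.

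For (2), I would first note that transitivity of all orders of $(G,X)$ passes to the diagonal system $(\Del^{2^d-1}(G), X_*^{[d]})$, so that $(G, X_*^{[d]})$ is in particular minimal; more to the point, I would show directly that $\{x\}\times X_*^{[d]} \subseteq C_x^{[d]}(X)$. Given any $b_* \in X_*^{[d]}$, minimality of $(\Del^{2^d-1}(G), X_*^{[d]})$ (a consequence of transitivity of all orders together with minimality of $(G,X)$, via the standard fact that a transitive system with a dense set of minimal points — here guaranteed by minimality of $(G,X)$ applied coordinatewise — is minimal) lets us find $g_i\in G$ with $g_i^{[d]}x_*^{[d]} \to b_*$; then $\Del^{[d]}(G)\cdot x^{[d]} \ni g_i x^{[d]} \to (y, b_*)$ for $y = \lim g_i x$, and since $\Del^{[d]}(G)\subseteq\Hcal^{[d]}$ this shows $(y,b_*)\in C_G^{[d]}(X)$; adjusting by a further element of $\Fcal^{[d]}$ to fix the $\vec 0$-coordinate at $x$ — again using step (1) — yields $(x,b_*)\in C_x^{[d]}(X)$. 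Combined with (1) and the inclusion $Y_x^{[d]}(X)\subseteq C_x^{[d]}(X)$, this gives $Y_x^{[d]}(X) = C_x^{[d]}(X) = \{x\}\times X_*^{[d]}$. Finally, (3) follows immediately: for any $x,y\in X$, the lower corner $\llcorner^{[d+1]}(x,y)$ has $\vec 0$-coordinate $x$ and all other coordinates equal to $x$ except the $\vec 1$-coordinate; so it lies in $\{x\}\times X_*^{[d+1]} = C_x^{[d+1]}(X)$ by (2), whence $(x,y)\in\NRP^{[d]}(X)$ by Definition \ref{def:general RPd}, i.e.\ $\NRP^{[d]}(X) = X\times X$.

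The main obstacle I anticipate is the careful bookkeeping in step (1): namely, verifying that the face cube group $\Fcal^{[d]}$ really does act transitively enough on $X_*^{[d]}$ that ``transitivity of all orders'' suffices. The subtlety is that $\Fcal^{[d]}$ is generated only by the $d$ upper-hyperface copies $[G]_{F_i}$, and a given coordinate $\o$ may lie in several of them simultaneously, so the $2^d - 1$ coordinates are not moved by $2^d - 1$ independent copies of $G$ but by products indexed by the $d$ generators. I would handle this by induction on $d$, using the decomposition $X^{[d]} = X^{[d-1]}\times X^{[d-1]}$ (floor/ceiling) together with Lemma \ref{lem:induced projections}: the ceiling projection sends $\Fcal^{[d]}$ onto $\Hcal^{[d-1]}$, and $\Hcal^{[d-1]} \supseteq \Fcal^{[d-1]}$ together with the diagonal, so by the inductive hypothesis the ceiling is already all of $X^{[d-1]}$ (once the floor coordinate is controlled), while the pure-ceiling part $[G]_{F_d} = \Id^{[d-1]}\times[G]_{\{0,1\}^{d-1}}$ — see Equation (\ref{eq:id times}) — gives an extra diagonal copy of $G$ acting on the ceiling independently of the floor; a diagonal-argument interleaving of these actions, invoking transitivity of all orders at each stage, closes the induction.
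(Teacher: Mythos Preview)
Your approach is essentially the paper's: induction on $d$ via the floor/ceiling decomposition, using that $\phi_c(\Fcal^{[d]})=\Hcal^{[d-1]}$ together with the pure-ceiling diagonal $[G]_{F_d}=\Id^{[d-1]}\times G^{[d-1]}$. Two remarks, however. First, your separate argument for (2) is superfluous: once (1) is known, the chain $Y_x^{[d]}(X)\subseteq C_x^{[d]}(X)\subseteq\{x\}\times X_*^{[d]}$ gives (2) immediately, and the paper simply records $(1)\Rightarrow(2)\Rightarrow(3)$ as trivial.

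Second, your phrase ``diagonal-argument interleaving \dots\ invoking transitivity of all orders'' hides exactly the step that needs care. Transitivity of all orders only guarantees that \emph{some} diagonal orbit in $X^{[d-1]}$ is dense, not all of them; so after using $\Hcal^{[d-1]}$-minimality of $C_G^{[d-1]}(X)=X^{[d-1]}$ (which you correctly derive from the inductive hypothesis) to reach some $(w,a)\in Y_x^{[d]}(X)$, you must have chosen $a$ to be a \emph{transitive point} for $(\Del^{[d-1]}(G),X^{[d-1]})$ in order for the pure-ceiling action to yield $\{w\}\times X^{[d-1]}\subset Y_x^{[d]}(X)$. The paper does precisely this, and then closes the induction by doubling: since $w\in Y_x^{[d-1]}(X)$ is $\Fcal^{[d-1]}$-minimal with orbit closure $\{x\}\times X_*^{[d-1]}$, applying $f\times f$ for $f\in\Fcal^{[d-1]}$ spreads the floor over all of $\{x\}\times X_*^{[d-1]}$ while preserving the ceiling fibre $X^{[d-1]}$. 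Your outline is compatible with this, but as written it does not isolate the transitive-point choice or the doubling step, and without them the ``interleaving'' is not an argument.
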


\begin{proof}
We start by proving $(1)$ by induction. Fix $x\in X$. The case $d=1$ follows from minimality.
Assume the statement for $d-1$, $d\ge 2$. Note this implies $(2)$ for $d-1$ and thus $C_{G}^{[d-1]}(X)=X^{[d-1]}$. Let $ a\in X^{[d-1]}$ be a transitive point. By Proposition \ref{prop:minimality}, $C_{G}^{[d-1]}(X)=X^{[d-1]}$ is $\Hcal^{[d-1]}$-minimal. We may thus find a sequence $g_k\in \Hcal^{[d-1]}$ such that $g_k x^{[d-1]}\rightarrow a$. By Proposition \ref{prop:H=00003DFDiag}, there is a sequence $f_k\in \Fcal^{[d-1]}$ and $h\in G$ so that $g_k=f_k h^{[d-1]}$. Note $(f_k\times f_k)(\Id^{[d-1]}\times h^{[d-1]})\in \Fcal^{[d]}$. By passing to a subsequence there is $w\in Y_{x}^{[d-1]}(X)$ so that $(f_k\times f_k)(\Id^{[d-1]}\times h^{[d-1]})(x^{[d-1]},x^{[d-1]})\rightarrow (w,a)$ and we conclude $(w,a)\in Y_{x}^{[d]}(X)$. Note that for any $h\in G$, $(\Id^{[d-1]}\times h^{[d-1]})(w,a)=(w,(h^{[d-1]})a)\in Y_{x}^{[d]}(X)$. Since the element $a$ is a transitive point, we have
\begin{equation}\label{d1}
\{w\}\times X^{[d-1]}\subset Y_{x}^{[d]}(X).
\end{equation}
By Proposition \ref{prop:Yx is minimal}, $w$ is $\Fcal^{[d-1]}$-minimal and
\begin{equation}\label{d2}
    \overline{\Fcal^{[d-1]}}(w)=Y_{x}^{[d-1]}(X)= \{x\}\times X^{[d-1]}_*.
\end{equation}
By acting the elements of $\Fcal^{[d]}$ on (\ref{d1}) and doubling (see Subsection \ref{sub:doubling}), we have
\begin{equation}\label{d3}
    \overline{\Or(w, \Fcal^{[d-1]})} \times X^{[d-1]}\subset Y_{x}^{[d]}(X).
\end{equation}
By (\ref{d2}) and (\ref{d3}), we have
$$\{x\}\times X^{[d-1]}_*\times X^{[d-1]}=\{x\}\times X^{[d]}_*\subset Y_{x}^{[d]}(X).$$
This completes the proof of $(1)$ for $d$. Finally trivially $(1)\Rightarrow (2)\Rightarrow (3)$.
\end{proof}

Let us call two t.d.s $(G,X)$ and $(G',X')$, where possibly $G\neq G'$, \textbf{isomorphic} if there exist a continuous surjective (but not necessarily injective) group homomorphism $\phi:G\to G'$  and a homeomorphism $f:X\to X'$ such that for all $x\in X$ and $g\in G$, $f(gx)=\phi(g)f(x)$. Let $\Fix(G,X)=\{g\in G|\,\forall x\in X,\, gx=x\}$. It is easy to see $\Fix(G,X)$ is a closed subgroup of $G$ and $(G,X)$ and $(G/\Fix(G,X),X)$ are isomorphic.

\begin{prop}\label{prop:effective action}
Let $(G,X)$ be a system of order at most $d$, i.e., $\NRP^{[d]}(X)=\Del$, and denote by $G_{d+1}$  the $(d+1)$-th element of the lower central series of $G$, then $(G,X)$ is isomorphic to $(H,X)$, where $H=G/\overline{G_{d+1}}$ is a nilpotent topological group of nilpotency class at most $d$.
\end{prop}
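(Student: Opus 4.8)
The plan is to show first that the closed subgroup $\overline{G_{d+1}}$ acts trivially on $X$, and then to descend the action to the quotient group $H=G/\overline{G_{d+1}}$.

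The first step is to invoke Lemma \ref{lem:elementary}(4): for every $h\in\overline{G_{d+1}}$ and every $x\in X$ one has $(x,hx)\in\NRP^{[d]}(X)$. Since by hypothesis $\NRP^{[d]}(X)=\Del$, this forces $hx=x$; as $h$ and $x$ were arbitrary, $\overline{G_{d+1}}\subseteq\Fix(G,X)$.

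Next I would form the quotient. The subgroup $G_{d+1}$ is a term of the lower central series, hence characteristic in $G$, and since conjugations are homeomorphisms its closure $\overline{G_{d+1}}$ is a closed normal subgroup. Thus $H\triangleq G/\overline{G_{d+1}}$ is a Hausdorff topological group, and the quotient homomorphism $\phi\colon G\to H$ is continuous, open and surjective. Because $\overline{G_{d+1}}$ fixes every point of $X$, the $G$-action factors through $\phi$, i.e.\ $\phi(g)\cdot x\triangleq gx$ is a well-defined action of $H$ on $X$; it is continuous because $X$ is compact Hausdorff (hence locally compact), so $\phi\times\id_X\colon G\times X\to H\times X$ is a quotient map whose composition with the candidate action map $H\times X\to X$ is the original continuous action $G\times X\to X$. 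With this $H$-action, the identity of $X$ together with $\phi$ is a dynamical morphism realizing an isomorphism $(G,X)\cong(H,X)$ in the sense fixed just before the statement; this is precisely the argument that yields $(G,X)\cong(G/\Fix(G,X),X)$, applied here to the closed normal subgroup $\overline{G_{d+1}}\subseteq\Fix(G,X)$.

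Finally I would check that $H$ has nilpotency class at most $d$. For a surjective group homomorphism $\phi$ one has $\phi(G_i)=H_i$ for all $i$ (an easy induction, using $[\phi(A),\phi(B)]=\phi([A,B])$ for subgroups $A,B$). Hence $H_{d+1}=\phi(G_{d+1})=G_{d+1}\overline{G_{d+1}}/\overline{G_{d+1}}=\{e\}$ since $G_{d+1}\subseteq\overline{G_{d+1}}$, so $H$ is nilpotent of class at most $d$, completing the proof. I do not expect a genuine obstacle here: the only points needing a little care are the continuity of the descended $H$-action (handled via compactness of $X$) and the compatibility of the lower central series with quotient maps.
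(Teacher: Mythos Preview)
Your proof is correct and follows essentially the same route as the paper: invoke Lemma~\ref{lem:elementary}(4) to get $\overline{G_{d+1}}\subseteq\Fix(G,X)$, observe that $\overline{G_{d+1}}$ is a closed normal subgroup so the action descends to $H=G/\overline{G_{d+1}}$, and then use that surjective homomorphisms send lower central series onto lower central series to conclude $H_{d+1}=\{e\}$. You supply more detail than the paper does on the continuity of the descended action and on why $G_{d+1}$ is normal (characteristic, in fact), but the structure and key ingredients are the same.
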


\begin{proof}
By Lemma \ref{lem:elementary}(4) for all $x\in X$ and $g\in\overline{G_{d+1}}$, $(x,gx)\in\NRP^{[d]}(X)$ which by assumption implies $gx=x$. By \cite[Lemma 5.1]{MKS66} the elements of the lower central series of $G$ are normal in $G$. Thus  $\overline{G_{d+1}}$ is normal in $G$ and $H=G/\overline{G_{d+1}}$ is a topological group. We conclude $(G,X)$ is isomorphic to $(H,X)$. Given a group homomorphism $G'\to H'$ the lower central series of $G'$ is mapped onto the lower central series of $H'$. Thus for $H=G/\overline{G_{d+1}}$, $H_{d+1}=\{\Id\}$ and $H$ is a nilpotent group of nilpotency class at most $d$.
\end{proof}

\begin{prop}\label{prop:alt def of nrp}
$(x,y)\in\NRP^{[d]}(X)$ if and only if $\urcorner^{[d+1]}(x,y)\in C_{G}^{[d+1]}(X)$.
\end{prop}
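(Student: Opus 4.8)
The plan is to establish the equivalence symmetrically: first show that $(x,y)\in\NRP^{[d]}(X)$ implies $\urcorner^{[d+1]}(x,y)\in C_{G}^{[d+1]}(X)$, and then observe the reverse implication follows by the same argument applied to a suitable coordinate permutation, since $C_{G}^{[d+1]}(X)$ is permutation-invariant (Proposition \ref{prop:dynamical cubespaces are cubespaces}, via cube invariance). In fact, permuting the coordinates of $\{0,1\}^{d+1}$ by the involution $\o\mapsto\vec{1}-\o$ carries $\llcorner^{[d+1]}(x,y)$ to $\urcorner^{[d+1]}(y,x)$ and carries $\urcorner^{[d+1]}(x,y)$ to $\llcorner^{[d+1]}(y,x)$. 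So it suffices to prove: $\llcorner^{[d+1]}(x,y)\in C_{G}^{[d+1]}(X)$ if and only if $\urcorner^{[d+1]}(x,y)\in C_{G}^{[d+1]}(X)$, and the whole content is the single implication ``lower corner is a cube $\Rightarrow$ upper corner is a cube'' (the converse then being the same statement with $x,y$ swapped, combined with the $\vec{1}-\o$ symmetry and the already-proven symmetry of $\NRP^{[d]}$ from Theorem \ref{thm:main}, though one should be careful to keep the argument self-contained if this proposition is used before Theorem \ref{thm:main}).

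The main step is therefore: assuming $\llcorner^{[d+1]}(x,y)\in C_{G}^{[d+1]}(X)$, produce $\urcorner^{[d+1]}(x,y)\in C_{G}^{[d+1]}(X)$. Here I would mimic the coordinate-collapsing argument already used in the proof of symmetry in Section \ref{sec:RPd is an equivalence relation}. Permuting coordinates, $\llcorner^{[d+1]}(x,y)\in C_{G}^{[d+1]}(X)$ gives $\urcorner^{[d+1]}(y,x)\in C_{G}^{[d+1]}(X)$; projecting onto the ceiling face (or rather, applying a face morphism $\{0,1\}^{d}\to\{0,1\}^{d+1}$) and using cube invariance yields $\urcorner^{[d]}(y,x)\in C_{G}^{[d]}(X)$, which by Corollary \ref{cor:approaching constant} lies in $Y_{y}^{[d]}(X)$, so there is a sequence $f_{k}\in\Fcal^{[d]}$ with $f_{k}\urcorner^{[d]}(y,x)\to y^{[d]}$. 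Then doubling along the last coordinate, $(f_{k}\times f_{k})\urcorner^{[d+1]}(y,x)=(f_{k}\times f_{k})(\urcorner^{[d]}(y,x),x^{[d]})\to(y^{[d]},\urcorner^{[d]}(x,y))$, which is exactly $\urcorner^{[d+1]}(y,x)$ with floor collapsed to $y^{[d]}$; a final coordinate permutation turns this limit into $\urcorner^{[d+1]}(x,y)$, and since it is a limit of points of $C_{G}^{[d+1]}(X)$ (a closed, $\Fcal^{[d+1]}$-invariant set), it belongs to $C_{G}^{[d+1]}(X)$. This is essentially verbatim the symmetry argument, just tracking which corner we land on.

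The step I expect to require the most care is not any single inequality but the bookkeeping of which coordinate permutations and face morphisms send $\llcorner$ to $\urcorner$ and vice versa, and making sure the doubling operation $D_i$ lands the collapsed configuration on the correct face. The cleanest route is to fix the permutation $\rho\colon\o\mapsto\vec1-\o$ of $\{0,1\}^{d+1}$ at the outset, note $c\circ\rho\in C_{G}^{[d+1]}(X)$ whenever $c\in C_{G}^{[d+1]}(X)$ by cube invariance, and check directly that $\llcorner^{[d+1]}(x,y)\circ\rho=\urcorner^{[d+1]}(y,x)$; composing this with the ``collapse the floor'' argument above and one more application of $\rho$ closes the loop. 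No genuine obstacle is anticipated: every ingredient (Corollary \ref{cor:approaching constant}, closedness of $C_{G}^{[d+1]}(X)$, doubling from Subsection \ref{sub:doubling}, cube invariance) is already in place.
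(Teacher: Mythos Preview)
Your proposal is correct and follows essentially the same route as the paper. The paper's proof is much terser: it simply invokes the symmetry of $\NRP^{[d]}(X)$ from Theorem \ref{thm:main} to get $(x,y)\in\NRP^{[d]}(X)\Leftrightarrow(y,x)\in\NRP^{[d]}(X)\Leftrightarrow\llcorner^{[d+1]}(y,x)\in C_{G}^{[d+1]}(X)$, and then applies cube invariance under $\rho:\o\mapsto\vec1-\o$ to obtain $\urcorner^{[d+1]}(x,y)\in C_{G}^{[d+1]}(X)$. Your ``collapse the floor via Corollary \ref{cor:approaching constant} and double'' argument is precisely the symmetry argument of Section \ref{sec:RPd is an equivalence relation} unpacked, so you are reproving what the paper simply cites. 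Your worry about circularity is unnecessary: the proposition sits in the Appendix and is proven after Theorem \ref{thm:main}; its only earlier mention (in Subsection \ref{sub:Regionally-proximal-equivalence}) is a forward reference in a remark, not a logical dependency. One small slip: when you write ``projecting onto the ceiling face'' to extract $\urcorner^{[d]}(y,x)$ from $\urcorner^{[d+1]}(y,x)$, you mean the \emph{floor} face $\{\o:\o_{d+1}=0\}$ (the ceiling of $\urcorner^{[d+1]}(y,x)$ is $x^{[d]}$).
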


\begin{proof}
By Theorem \ref{thm:main}, $(x,y)\in\NRP^{[d]}(X)$ iff $(y,x)\in\NRP^{[d]}(X)$ iff $\llcorner^{[d+1]}(y,x)\in C_{G}^{[d+1]}(X)$. By cube-invariance (e.g applying $\o_{1},\ldots,\o_{r}\leftrightarrow \overline{\o_{1}},\ldots,\overline{\o_{r}}$) $\llcorner^{[d+1]}(y,x)\in C_{G}^{[d+1]}(X)$ iff $\urcorner^{[d+1]}(x,y)\in C_{G}^{[d+1]}(X)$.
\end{proof}

\bibliographystyle{alpha}
\bibliography{universal_bib}

\end{document}